\theoremstyle{plain}
\newtheorem{thm}{Theorem}[section]
\newtheorem{thmn}{Theorem}
\newtheorem{lem}[thm]{Lemma}
\newtheorem{cor}[thm]{Corollary}
\newtheorem{prop}[thm]{Proposition}
\theoremstyle{definition}
\newtheorem{defn}[thm]{Definition}
\newtheorem{nota}[thmn]{Notation}
\theoremstyle{remark}
\newtheorem{rem}[thm]{Remark}
\newtheorem*{rema}{{\bf Remark 6.A}}
\newtheorem*{remb}{{\bf Remark 4.1.A}}
\newtheorem*{remc}{{\bf Remark 4.2.B}}
\newcommand{\nc}{\newcommand} 
\nc{\hb}{\mathbb} 
\nc{\M}{\mathcal} 
\nc{\mf}{\mathfrak}
\nc{\mbf}{\mathbf}
\nc{\DMO}{\DeclareMathOperator}
\newbox\noforkbox \newdimen\forklinewidth
\noforkbox\hbox{\lower 2pt\box1\lower 2pt\box0\relax}
\def\anchor{\mathop{\copy\noforkbox}\limits}
\newbox\doesforkbox
\doesforkbox\hbox{\box1 \lower 2pt\box2\lower2pt\box0\relax}
\def\nanchor{\mathop{\copy\doesforkbox}\limits}
\nc{\cA}{{\M A}} \nc{\cB}{{\M B}} \nc{\cC}{{\M C}} \nc{\cD}{{\M D}}
\nc{\cE}{{\M E}} \nc{\cF}{{\M F}} \nc{\cG}{{\M G}} \nc{\cH}{{\M H}}
\nc{\cI}{{\M I}} \nc{\cJ}{{\M J}} \nc{\cK}{{\M K}} \nc{\cL}{{\M L}}
\nc{\cM}{{\M M}} \nc{\cN}{{\M N}} \nc{\cO}{{\M O}} \nc{\cP}{{\M P}}
\nc{\cQ}{{\M Q}} \nc{\cR}{{\M R}} \nc{\cS}{{\M S}} \nc{\cT}{{\M T}}
\nc{\cU}{{\M U}} \nc{\cV}{{\M V}} \nc{\cW}{{\M W}} \nc{\cX}{{\M X}}
\nc{\cY}{{\M Y}} \nc{\cZ}{{\M Z}}
\nc{\Aa}{{\hb A}} \nc{\Cc}{{\hb C}} \nc{\Gg}{{\hb G}}
\nc{\Nn}{{\hb N}} \nc{\Pp}{{\hb P}} 
\nc{\Qq}{{\hb Q}} \nc{\Rr}{{\hb R}} \nc{\Zz}{{\hb Z}}
\nc{\mfa}{{\mf a}} \nc{\mfb}{{\mf b}} \nc{\mfk}{{\mf k}}
\nc{\mfm}{{\mf m}} \nc{\mfp}{{\mf p}} \nc{\mfq}{{\mf q}}
\nc{\mfr}{{\mf r}}
\nc{\fP}{{\mf P}}
\DMO*{\trdeg}{td}
\DMO*{\spec}{Spec}
\DMO*{\fork}{\nanchor}
\DMO*{\dnf}{\anchor}
\DMO{\RU}{RU}
\DMO{\deter}{det}
\DMO{\RM}{RM}
\DMO{\RC}{RC}
\DMO{\Real}{Re}
\DMO{\Imag}{Im}
\DMO{\tr}{tr}
\DMO{\qc}{QC}
\DMO{\Hu}{Hull}
\DMO{\leg}{length}
\DMO{\area}{area}
\DMO{\dia}{diameter}
\DMO{\iso}{Iso}
\DMO{\dis}{dist}
\DMO{\grad}{grad}
\DMO{\vol}{volume}
\DMO{\gra}{grad}
\DMO{\hd}{nbhd}
\DMO{\dv}{div}
\DMO{\Psl}{PSL}
\nc{\Mb}{\mathfrak^{2b/\delta}_{K_x}}
\nc{\Ma}{\mathfrak^{2a/\delta}_{K_x}}
\nc{\dif}{\mathrm{d}}
\nc{\G}{\Gamma}
\nc{\g}{\gamma}
\nc{\D}{\nabla}
\nc{\p}{\partial}
\nc{\DD}{\Delta^2}
\nc{\pp}{\partial^2} 
\nc{\de}{\delta}
\nc{\td}[2]{\trdeg{({#1}/{#2})}}
\nc{\dtd}[2]{\trdeg_{\delta}{({#1}/{#2})}}
\nc{\dspec}[1]{\spec_{\delta}{#1}}
\nc{\ddim}[1]{\dimen_{\delta}{#1}}
\nc{\gens}[1]{\langle {#1} \rangle}        
\nc{\gen}[2]{ {#1} \langle {#2} \rangle } 
\nc{\form}{\Omega}
\nc{\set}[1]{\left\{ {#1} \right\}}
\nc{\mr}{\hat}
\nc{\pr}{\partial}
\nc{\bc}[3]{\cB^{#1}({#2},{#3})=B^{#1}_{#2}(C^{#2}_{#3}(t)+B^{#1}_{#3}(C^{#2}_{#3}(t))} 
\nc{\tuple}[2]{{#1},\ldots,{#2}} \nc{\ptu}[2]{{#1}:\ldots:{#2}}
\nc{\maps}[3]{{#1}\!:\!{#2}\rightarrow{#3}}
\nc{\map}[2]{{#1}\rightarrow {#2}} \nc{\res}[2]{{#1} |_{#2}}
\nc{\imbed}{\hookrightarrow}
\title{All finitely generated Kleinian groups of small Hausdorff dimension are classical Schottky groups.}
\author{ Yong Hou }
\date{Princeton University}
\begin{document}
\maketitle
\begin{abstract}
This is the second part of our works on Hausdorff dimension of Schottky groups \cite{HS}.
In this paper we prove that there exists a universal positive number $\lambda>0$,  
such that up to finite index, any finitely-generated non-elementary Kleinian group with limit set of Hausdorff dimension $<\lambda$
is a classical Schottky group. The proof rely on our previous works in \cite{HS}, \cite{Hou} which provide the foundation to the general result of this paper. Our result can also be considered as a converse to the well-known theorem of Doyle \cite{Doyle} and Phillips-Sarnak \cite{phillips}.
\end{abstract}
\setcounter{tocdepth}{1}
\tableofcontents
\section{Introduction and Main Theorem}
Let $x\in\mathbb{H}^3$ be a point in the hyperbolic $3$-space. For a given Kleinian group (finitely generated, torsion-free, discrete subgroup $\G$ of $\Psl(2,\mathbb{C})$), the limit set $\Lambda_\G$ of $\G$ is defined as
$\Lambda_\G=\overline{\G x}\cap\partial\mathbb{H}^3$. $\Lambda_\G$
is the minimal non-empty closed  subset of $\partial\mathbb{H}^3$ which is invariant under $\G.$ 
The group $\G$ is called \emph{non-elementary} if $\Lambda_\G$ contains more then two points. 
When $\Lambda_\G\not=\partial\mathbb{H}^3$ then the group $\G$ is said to be of second kind, otherwise it
is called first kind. The set
$\Omega_\G=\partial\mathbb{H}^3-\Lambda_\G$ is called region of discontinuity of $\G$, and
it follows that $\G$ acts properly discontinuously on $\Omega_\G.$ \par

Given a collection of disjointed 
closed topological disks $D_i,D_i', 1\le i\le k$ in the Riemann sphere $\partial\mathbb{H}^3=\overline{\mathbb{C}}$ with
boundary curves $\partial D_i=\Delta_i,\partial D_i'=\Delta_i'.$ By definition $\Delta_i$ are closed Jordan curves in Riemann sphere 
$\partial\mathbb{H}^3.$ Let $\{\g_i\}^k_1\subset\Psl(2,\mathbb{C})$
such that $\g_i(\Delta_i)=\Delta'_i$ and $\g_i(D_i)\cap D'_i=\emptyset,$ the group
$\G$ generated by $\{\g_1,...,\g_k\}$ is a free Kleinian
group of rank $k,$ and $\G$ is called marked Schottky group with marking $\{\g_1,...,\g_k\}$. Equivalently, one can also define rank $k$ Schottky group to
be the image of $\rho,$ discrete faithful convex-compact representation of rank $k$ free group $\mathbb{F}_k$ into $\text{PSL}(2,\mathbb{C}),$ with marking
given by $\rho.$ The equivalence of these two definitions simply follows from \v Svarc-Milnor lemma and the classification of finitely generated purely loxodromic
free Kleinian groups. \par
Given a finitely generated Kleinian group $\G$ it is called Schottky group if it is a marked Schottky group for
some markings.
Whenever there exists a
set $\{\g_i,...,\g_k\}$ of generators with all $\Delta_i,\Delta'_i$ as circles then it is 
called a marked classical Schottky group with classical markings $\{\g_i,...,\g_k\}$, and 
$\{\g_1,...,\g_k\}$ are called classical generators.
A Schottky group $\G$ is called classical Schottky group if there exists a classical markings for $\G$.\par
Denote by $\mathfrak{J}_k\subset\Psl(2,\mathbb{C})$ the space of all rank $k$ Schottky groups up to conjugacy,
and $\mathfrak{J}_{k,o}$ be the set of all rank $k$ \emph{classical} Schottky groups up to conjugacy. One can topologies and put analytic structure on 
$\mathfrak{J}_k$ as follows:
Note that one can embed $\mathfrak{J}_k$ into $\mathbb{C}^{3k-3}$ as $(3k-3)$-dimensional complex manifold. Then $\mathfrak{J}_k$ is infinite index covering space,
in fact analytic cover, of the moduli space
of genus $k$ closed Riemann surfaces. Hence, $\mathfrak{J}_k$ can also be considered \emph{fine} moduli space of  genus $k$ Riemann surfaces. The universal covering space of $\mathfrak{J}_k$ is the Techim\"uller space.\par

Equivalently, Schottky space $\mathfrak{J}_k$ can also be viewed as the quasiconformal deformation space $QF(\G)$ of a fixed given rank $k$ classical Schottky group $\G.$ The equivalency of these two definitions simply follows from representation deformation theory of Klenian groups and the definition of Schottky groups.
Note that $QF(\G)$ is independent of choice of $\G$ since given any two ranked $k$ classical Schottky groups $\G,\G'$ marked with $\{\g_i\},\{\g'_i\}, 1\le i\le k$
there exists quasiconformal $f$ with $\{\g'_i\}=\{f\circ\g_i\circ f\}.$ 
The boundary $\partial QF(\G)\subset\Psl(2,\mathbb{C})^{k}$ contains points that are non-Schottky groups which are called \emph{geometrically infinite} Kleinian groups
of first kind (i.e. limit set is $\overline{\mathbb{C}}$). These Kleinian groups have limit set of Hausdorff dimension $2.$ In fact, there are uncountable many
such elements in $\partial QF(\G).$
\par
The set of classical Schottky groups is a non-empty and non-dense subset of set of Schottky
groups \cite{Marden}, also see \cite{Doyle}.\par 
For a given Schottky group $\G$, denote the Hausdorff dimension of $\Lambda_\G$ by $\mathfrak{D}_\G.$ Our main result is:
\begin{thm}\label{main} 
There exists a universal $\lambda>0$, 
such that all finitely generated non-elementary Kleinian group $\G$ with $\mathfrak{D}_\G<\lambda$ are classical Schottky groups.
\end{thm}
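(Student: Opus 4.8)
\emph{Sketch of proof.} The plan is first to take $\lambda\le\tfrac12$ and observe that the smallness of $\mathfrak D_\G$ already pins down the algebra and topology of $\G$: a Kleinian group with a parabolic has $\mathfrak D_\G\ge\tfrac12$, so $\G$ is purely loxodromic; since $\mathfrak D_\G$ equals the Hausdorff dimension of the conical limit set, and the conical limit set of a finitely generated geometrically infinite group has dimension $2$, the group $\G$ is geometrically finite, hence convex cocompact; and $\mathfrak D_\G<1$ forces $\Lambda_\G$ to have topological dimension $0$, so it is totally disconnected. A convex cocompact Kleinian group with totally disconnected limit set is free with handlebody quotient, so it is a (marked) Schottky group of some rank $k\ge 2$; from here on $\G$ is such a group and the task is to produce a classical marking.

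The second step is to extract from $\mathfrak D_\G<\lambda$ strong separation of the $2k$ pieces $\Lambda_{a_i^{\pm1}}\subset\Lambda_\G$---the limit points of reduced words beginning with $a_i^{\pm1}$---for a free basis $\{a_1,\dots,a_k\}$ read off from a spine of the handlebody $\mathbb H^3/\G$. Comparing the exponential growth rate of the free group with the displacement bound $d(x,w\,x)\le\|w\|\max_i\ell(a_i)$ for $x$ in the convex core shows that such a basis cannot have all of its translation lengths small, and the ping--pong dynamics trap each $\Lambda_{a_i^{\pm1}}$ inside the isometric circle of $a_i^{\pm1}$, of radius $\asymp e^{-\ell(a_i)/2}$. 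The output I want, and where the technology of \cite{HS} has to be pushed, is quantitative and uniform: every piece $\Lambda_{a_i^{\pm1}}$ has diameter small compared with its distance to each of the other $2k-1$ pieces, with a bound depending only on $\lambda$ and not on $k$.

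Classicality then follows from an observation about marking disks: if $D_i$ is a round disk with $\Lambda_{a_i^{-1}}\subset D_i$ and $\overline{D_i}\cap\Lambda_\G=\Lambda_{a_i^{-1}}$, then $D_i':=\overline{\mathbb C}\setminus a_i(\overline{D_i})$ is again a round disk (a M\"obius map carries circles to circles), its boundary is $a_i(\partial D_i)$, and $a_i(\overline{\mathbb C}\setminus\overline{D_i})=D_i'$; hence $\{a_1,\dots,a_k\}$ is a classical marking of $\G$ as soon as the $2k$ round disks $D_1,\dots,D_k,D_1',\dots,D_k'$ are pairwise disjoint. I would prove this disjointness by induction on the rank. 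The base case, a cyclic loxodromic group, is immediate. For the inductive step, $\G'=\langle a_1,\dots,a_{k-1}\rangle$ satisfies $\mathfrak D_{\G'}\le\mathfrak D_\G<\lambda$, so re-running the first step it too is a Schottky group; by the inductive hypothesis it carries a classical marking, and the handlebody decomposition shows that $\Lambda_{a_k^{\pm1}}$ lies in the fundamental domain $F'$ of that marking. One then inserts a thin round disk $D_k$ around $\Lambda_{a_k^{-1}}$ inside $F'$---the matching disk $D_k'$ being forced and equally thin---and the separation estimates, uniform in $k$, guarantee that for $\lambda$ small enough $D_k$ and $D_k'$ lie in $F'$ without meeting the disks already constructed; abstractly this is the Klein--Maskit combination of the classical marking of $\G'$ with the cyclic classical marking of $\langle a_k\rangle$.

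The hard part is precisely this uniformity in the rank $k$: a priori the $2k$ round disks could exhaust the sphere, and the constraint $D_i'=\overline{\mathbb C}\setminus a_i(\overline{D_i})$ couples all the choices, so rounding the $k$-th pair must not force us to re-enlarge the earlier ones. Keeping the separation and size estimates from deteriorating as $k\to\infty$, so that one universal $\lambda$ works simultaneously for every rank, is where the bulk of the work---and the genuine generalization of \cite{HS}---will lie.
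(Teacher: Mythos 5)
Your first step (small $\mathfrak{D}_\G$ forces purely loxodromic, convex cocompact, totally disconnected limit set, hence free with handlebody quotient, hence Schottky) is fine and is essentially the paper's final reduction. The genuine gap is in the passage from "strong separation of the pieces $\Lambda_{a_i^{\pm1}}$" to a classical marking. Separation of the pieces is not sufficient, precisely because of the coupling $D_i'=\overline{\mathbb{C}}\setminus a_i(\overline{D_i})$ that you mention only in connection with large $k$: the dangerous situation already occurs at rank $2$, when some generator has multiplier $|\lambda|$ close to $1$ (bounded trace). Such a generator is not excluded by the displacement estimate of Theorem \ref{aritical} (which allows one short generator), its isometric circles are not small, so the claim that ping--pong traps each piece in an isometric circle of radius $\asymp e^{-\ell(a_i)/2}$ fails for it, and for it no choice of round disk is "thin with thin partner": if $D$ is any round disk about its repelling fixed point then $a(\overline D)$ is only slightly larger than $D$, so $D'=\overline{\mathbb{C}}\setminus a(\overline D)$ is enormous and the remaining $2k-2$ disks must be squeezed into a round annulus of modulus $2\log|\lambda|\to 0$. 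Whether that is possible is governed not by the mutual distances of the limit pieces but by the rate at which the other traces grow relative to $|\lambda|^2-1$, and extracting exactly that from the critical exponent is the analytic core of the paper (Lemmas \ref{axes} and \ref{iso-ratio}, the case analysis $(A)$--$(D)$ of Theorem \ref{t-space}, Lemmas \ref{m=0} and \ref{axis-m=0}, and the non-isometric circles of Proposition \ref{non-iso} when traces stay bounded). None of this is recoverable from the separation statement you propose to prove, so "for $\lambda$ small enough $D_k$ and $D_k'$ lie in $F'$ without meeting the disks already constructed" is an assertion of the theorem's hardest case, not a consequence of your estimates.

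The inductive step also has a structural flaw: the classical marking of $\G'=\langle a_1,\dots,a_{k-1}\rangle$ supplied by the inductive hypothesis is with respect to some unrelated free basis, and there is no reason that $\Lambda_{a_k^{\pm1}}$ (let alone the forced disk $D_k'$) lies in the complement $F'$ of its disks; the handlebody spine from which you read off $a_1,\dots,a_k$ has no relation to that marking. Repairing this forces one to replace $a_k$ by words such as $\alpha_1^{m}a_k\alpha_1^{l}$ (and to re-normalize by M\"obius maps), while keeping track of how fixed points and traces move under such replacements; that is the generator-surgery and $\kappa(S_\G)$ machinery of Sections 6--8, together with Theorem \ref{2-fixed-point} which removes the a priori lower bound on the fixed-point gaps. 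Finally, the uniformity in the rank, which you correctly flag but leave open, is obtained in the paper from the rank-weighted displacement estimate (Lemma \ref{aritical-k}, Corollary \ref{trace-rank-dis}): as $k$ grows, all but boundedly many generators must have traces growing with $k$, so their disks shrink fast enough that crowding on the sphere cannot occur (Lemma \ref{k-finite}). As written, your proposal is a plausible plan whose two decisive steps --- classicality at fixed rank without the bounded-multiplier obstruction, and uniformity over all ranks --- are both left unproved.
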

Note that Theorem \ref{main} holds up to finite index. Theorem \ref{main} can be viewed as the converse to the result by Phillips-Sarnak \cite{phillips} (for higher dimensional Kleinian groups) and Doyle \cite{Doyle} which states that there exists a universal upper bound on
the Hausdorff dimension of limit sets of all classical Schottky groups. There are also some recent interesting works on bounds of Hausdorff dimensions of limit sets of 
Kleinian groups by Kapovich\cite{kap}, Bowen\cite{Bowen}.\par
Let $\lambda>0$ be given by Theorem \ref{main}. Denote $\mathfrak{J}_{k,o}^\lambda$ as the set of classical Schottky groups of Hausdorff dimension $<\lambda.$ Then by analyticity of Hausdorff dimension function on Schottky space and Theorem \ref{main}, we have the following simple observation:
\begin{cor}\label{main-cor}
$\mathfrak{J}_{k,o}^\lambda$ is $(3k-3)$-dimensional complex manifold.
\end{cor}
\subsection{Strategy of the proof} 

The proof is based on induction on rank $k.$ It follows from main result of \cite{HS}, we know Theorem \ref{main} holds for $k=2.$ We assume Theorem \ref{main} is true for $k-1$ with $k>2.$ The ideas and techniques used are mostly generalizations of methods developed in \cite{HS}.  \par
Note that for a given rank $k$ Schottky group, when we speak of set of generators or generating set, we always mean a set of $k$ elements of free generators.
Hence by definition it is always minimal.
\begin{itemize}
\item 
\emph{Preliminary estimates}:
 We state some preliminary estimates and some of it's generalizations that are given in \cite{HS} on the locations of fixed points, centers of isometric circle of a 
given set of generators of a Schottky group. Detailed proofs of these estimates can be found in \cite{HS}. 
These estimates will give us a sufficient controls on how the fixed points
of a set of generators move in terms of the Hausdorff dimension of the limit set of the group. More precisely, when the Hausdorff dimension
of rank $k$ Schottky group $\G$ is small then, \emph{any} set of $k$ generators will have either the norm of the trace exponentially large or its fixed points
degenerates into single point exponentially fast. The main ingredients 
of the proofs of these estimates rely on the results of \cite{Hou}. This is done in Section $3$ and Section $4.$\par 
\item
\emph{Sufficient condition for classscial-ness}:
For a given sequence of rank $k$ Schottky groups, we will derive a set of sufficient conditions on any sequence of set of generators with respective to Hausdorff dimensions of its limit sets for it to contain classical Schottky group generators. If a given sequence of set of generators satisfies these conditions then, the sequence will contain a subsequence
that will eventually be classical Schottky groups. \par
These sufficient conditions ensures that a given sequence of decreasing Hausdorff dimensions will contain a subsequence of eventually classical Schottky groups are given in Section $5.$ For computational purpose, it turns out that the computations in this part are easier by using the unit ball model of the hyperbolic $3$-space. However all statements are made in the upper-half space of hyperbolic $3$-space. Of course all statements are model independent. In fact, this is our generalized version of conditions given in \cite{HS} to rank $k.$ These conditions in the basic form simply states that, if the norm of the traces of generators growth sufficiently fast compare to the degenerations of it's fixed points, then all the Schottky disks will eventually be disjoint.\par
\item
\emph{Obstructions to classical-ness}:
Let be a sequence of rank $k$ Schottky groups $\G_n$ of decreasing Hausdorff dimensions $\mathfrak{D}_n\to 0.$ Denote it's set of generators by $S_{\Gamma_n},$  
we will classify all \emph{obstructions} for this sequence to contain any classical Schottky group subsequence. 
It follows from Theorem \ref{critical} and Proposition \ref{trace}, there are at least
$k-1$ generators that will have either norms of the trace growth exponentially or it's fixed points degenerates into single point exponentially, with respect to the 
$\mathfrak{D}_n.$ Hence we first assume the sequence is $k-1$ \emph{classical} of standard form, i.e generators have $k-1$ that are classical generators with isometric centers not contained circles of the minimal element of $|\mbox{trace}|.$ Then we define \emph{degenerate types} I, II, III, IV for the given sequence. These are the obstructions to existence of classical Schottky group.\par
\item
\emph{Degenerate types becomes classical when Hausdorff siemsnion is sufficiently small}:
We show that for each of the degenerate types I, II, II, IV given as obstructions to the existence of classical Schottky group in a sequence of Schottky groups, when 
the Hausdorff dimension $\mathfrak{D}_{\G_n}\to 0$ it must become classical Schottky group. In particular, all the obstructions can be removed (degeneracy resolved) when the Hausdorff dimension is sufficiently small. The techniques used to resolve degeneracies is based generator selection process and trace-Hausdorff-dimension growth estimates.
\item
\emph{Non-collapsing fixed points subspaces}:
Based on degeneracies resolutions for sufficiently Hausdorff dimension, we then prove that Theorem \ref{main} holds for $\mathfrak{J}_k(\tau)$ Schottky subspaces for $\tau>0.$ Here the Schottky subspace $\mathfrak{J}_k(\tau)$ is defined as collection of Schottky groups up to conjugation that have a generating sets $S_{\G}$ with uniform lower bound on $Z_{S_\G}>\tau$ (bounded separations of it's fixed point sets). This is proved by induction on $k$ and removal of obstructions to classical-ness when Hausdorff dimension is small.
\item
\emph{Removing uniform lower fixed points bound constraint}:
We classify possible types of fixed points collapses into Collapsing fixed points I and Collapsing fixed points II. Given a sequence of $\G_n$ with 
$\mathfrak{D}_{\G_n}\to 0,$ we show that both types of collapsing fixed points I and II we must have either, uniform lower bound on
$\mathfrak{D}_{\G_n}+Z_{\G_n}$(sum of Hausdorff dimensions and fixed points bounds), or $\G_n$ will contain a subsequence of classical Schottky groups. This is done by analyzing fixed points distributions based on estimates of isometric circles centers of generating sets when the Hausdorff dimensions is sufficiently small.
\item
\emph{Theorem \ref{main} holds for fixed rank}: 
Based on the removability of the constraint of uniform lower bound $Z_\G>\tau$ and result on $\mathfrak{J}_k(\tau),$ we show that Theorem \ref{main} is true for rank $k$ whenever $k-1$ is true.
\item
\emph{Rank extension}:
Finally we prove rank-extension  based on fixed rank result. We show that if Theorem \ref{main} holds for rank k, i.e. there exists bounds $\lambda_k$  then for sufficiently small Hausdorff dimension, $\lambda_k$ must be independent on $k.$ This is proved by using Rank-length growth estimates Lemma \ref{critical-k}.
The Rank-length estimates is growth estimates that relate the rank number $k$ and norms of the traces with respect to other generators with respect to Hausdorff  dimension.
\end{itemize}
\subsection{Organization of the paper}
Section $2$ is used to define and lists global notations that will be used throughout the paper.
In Section $3,$ we state a strong form of Theorem 1.1 given in \cite{Hou} for rank $k$ free groups acting on hyperbolic space.
Note that the original results of \cite{Hou} are state for pinched negatively curved manifolds.
This will be used for generators selection processes, see Corollary \ref{cor-2}. 
In Section $4,$ given any sequence of rank $k$ Schottky groups $\G_n$ with bounded 
$Z_{\G_n}$ (see section $2$),
we shall prove estimates that will enable us to bounds locations of fixed points
of a given sequence of generators of $\G_n$ in terms of the Hausdorff dimensions $\mathfrak{D}_{\G_n}$ of $\G_n.$
These estimates will be used in the generators set selection procedure. In Section $5$, we will derive sufficient conditions for a given set
of $k$ generators
to be a classical generators. This is done in the hyperbolic ball space $\mathbb{B}.$  Sections $6$ to $11$, given a sequence of rank $k$ Schottky groups which is $k-1$ 
classical and of strictly decreasing Hausdorff dimension to $0$, we show that all types of degeneracies can be resolved. Hence all degenerate types will leading to classical Schottky groups.
Section $12$ to $14$, we remove non-collapsing fixed points constraint.
In Section $15$, we extend from fixed rank $k$ to finitely generated by using already established results in earlier sections.
Section $16,$ we finish the proofs of our main theorem by simple standard topological arguments.\par
\centerline{ACKNOWLEDGEMENTS}\par
The author would like to express appreciation to Benson Farb, Peter Sarnak for giving me opportunity to discuss this work in details. I would also like to express gratitude to Peter Shalen, Marc Culler, Dick Canary for many thoughts on this paper.  Finally, I like to express sincere appreciation to the referee.\par
\section{Notations}
Let $\G\subset\Psl(2,\mathbb{C})$ be a rank $k$ Schottky group generated by $<\alpha_1,\beta_2,...,\beta_k>$ with 
$\alpha_1$ having fixed points $0,\infty.$ Note that by our choice here, all generating set are \emph{minimal}, i.e generating set of $k$ elements for  given rank $k$ Schottky group.
Asssume that $\gamma\in\G$ is a loxodromic element having fixed points
$\not=\infty.$ For a $\gamma$ we write in matrix form as
$\gamma=\bigl(\begin{smallmatrix} a & b\\c & d\end{smallmatrix}\bigr),$ with $\det(\gamma)=1.$
We will set the following notations and definitions throughout the rest of the paper.
\begin{nota}\text{}
\begin{itemize}
\item Denote the critical exponent of $\G$ by $\mathfrak{D}_\G,$ which is equal the Hausdorff dimension of $\Lambda_\G$ (see Section $3$).
\item $\mathfrak{R}_\gamma$ the radius of isometric circles of $\gamma$.
\item $\eta_\gamma=-\frac{d}{c}$, $\zeta_\gamma=\frac{a}{c}.$
\item We will define two different ways to denotes the two fixed points of $\gamma:$ 
$\left\{z_{\gamma,l},z_{\gamma,u}\right\}$ the two fixed
points of $\gamma$ in $\mathbb{C}$ with $|z_{\gamma,l}|\le|z_{\gamma,u}|$, and $\left\{z_{\gamma,-},z_{\gamma,+}\right\}$ 
the two fixed points with $z_{\gamma,\pm}$ given by quadratic formula with subscripts $\pm$
corresponding to $\pm\sqrt{\tr^2(\gamma)-4}.$ Note that we always take the principle branch for the square roots of complex numbers.
\item $\mathcal{L}_\gamma$ is the axis of $\gamma.$
\item $T_\gamma$ is the translation length of $\gamma.$
\item $Z_{\beta_i}:=\min\{|z_{\beta_i,-}-z_{\beta_i,+}|,|\frac{1}{z_{\beta_i,-}}-\frac{1}{z_{\beta_i,+}}|\}.$ Geometrically, $Z_{\beta_i}$ measure how far apart do the fixed
points of $\beta_i$ are in the conformal boundary $\hat{\mathbb{C}}$ of $\mathbb{H}^3$. This will be used subsequently for rate of convergence of fixed points with respect to norm of the trace. For instance it is use for estimates in Section $4.$
\item
$Z_{<\alpha_1,\beta_i>}:=\min\{Z_{\beta_i},|z_{\beta_i,+}|,|z_{\beta_i,-}|,|z_{\beta_i,+}|^{-1},|z_{\beta_i,-}|^{-1}\}$ for all $2\le i\le k.$
whenever we need to emphases that $\beta_i$ is element of the generating set we will also denote $Z_{<\alpha_1,\beta_i>}$ by $Z_{<\alpha_1,\beta_2,...,\beta_k>}$ to explicitly write out the generators set. \par
Geometrically, $Z_{<\alpha_1,\beta_i>}$ or $Z_{<\alpha_1,\beta_2,...,\beta_k>}$ measures relative position of the fixed points of $\beta_i$ with respect to the fixed points of $\alpha_1$ in the conformal boundary $\hat{\mathbb{C}}.$ These relative positions will be used to make selection of generating sets. 

\item For $\epsilon>0,$ we say that $Z_\G>\epsilon,$ if there exists a generating set $<\alpha_1,\beta_2,...,\beta_k>$ of $\G$ such that
$Z_{<\alpha_1,\beta_2,...,\beta_k>}>\epsilon.$
\item
Given any two sequences of real numbers $\{p_n,q_n\}$, we use the notation $p_n\asymp q_n$ iff, 
there exists $\sigma>0$ such that, $\sigma^{-1}<\liminf\frac{p_n}{q_n}\le\limsup\frac{p_n}{q_n}<\sigma.$\par
\item
Given two real numbers $a\le b$, set $D_{a,b}:=\{z\in\mathbb{C}|a\le|z|\le b\}.$
\item
Given a loxodromic $\g,$ denote by $D_{\g,\pm},$ Schottky disks of $\g.$ Set $D_{\g}:=D_{\g,+}\cup D_{\g,-}.$
Note that Schottky disks are defined to be circular disks such that $\g(\text{interior}(D_{\g,+}))\cap\g(\text{interior}(D_{\g,-}))=\emptyset.$
Also note that it is true that any loxodromic element always have Schottky disks (see Proposition 5.5), but distinct elements generally don't have disjointed Schottky disks. In addition, Schottky disks may not always be isometric circle disks, since not all loxodromic elements have disjoint isometric circles. But for sufficiently large norm of the trace and control on $Z_\G$ once can always take the Schottky disks to be isometric circular disks (disks bounded by isometric circles). 
\end{itemize}
\end{nota}
\begin{nota}
Let $\{\gamma_n\}\subset\Psl(2,\mathbb{C})$ be a sequence of loxodromic transformations.
Let $\{p_n\}$ be a sequence of complex numbers, and $\{q_n\}$ a sequence of positive real numbers. We write:
\[\left|z_{\gamma_n,\pm}-p_n\right|<q_n\]
if there exists $N$ such that for every $n>N$ we have at least one of the following holds,
\begin{itemize}
\item[(i)]
\[\left|z_{\gamma_n,+}-p_n\right|<q_n,\]
\item[(ii)]
\[\left|z_{\gamma_n,-}-p_n\right|<q_n.\]
\end{itemize}
\end{nota}

\section{$\G$ Actions on Hyperbolic Space}
Let $\G$ be a finitely generated non-elementary Kleinian group, the critical exponent of $\G$ is defined as the unique positive 
number $\delta_\G$ such that the Poincar\'{e} series of $\G$ given by 
$\sum_{\g\in\G}e^{-s\dis(x,\g x)}$ is divergent if $s<\delta_\G$ and
convergent if $s>\delta_\G$. If the Poincar\'{e} series diverges at
$s=\delta_\G$ then $\G$ is said to be divergent. Bishop-Jones showed that
$\delta_\G\le\mathfrak{D}_\G$ for all analytically finite non-elementary Kleinian groups $\G$.
In-fact, if $\G$ is topologically tame ($\mathbb{H}^3/\G$ homeomorphic to the
interior of a compact manifold-with-boundary) then  $\delta_\G=\mathfrak{D}_\G$. Hence it follows
from Ian Agol \cite{agol}, and independently Danny Calegari and Dave Gabai \cite{CG} proof of tameness conjecture 
that $\delta_\G=\mathfrak{D}_\G$ for all finitely generated non-elementary Kleinian groups. Note that Agol's proof
of tamness conjecture extends to pinched negatively curved manifolds.\par
The critical exponent and hence Hausdorff dimension is a geometrically rigid object in the sense that
a decrease in $\mathfrak{D}_\G$ corresponds to a decrease in ``geometric complexity" of the group. Our following results are indications of geometric complexity.\par
We state the following theorem from \cite{Hou}, which provides the
relation between the negatively curved free group action and the critical exponent.
\begin{thm}[Hou\cite{Hou}]\label{critical}
Let $\G$ be a free non-elementary Kleinian group of rank $k$ with free generating
set $\mathcal{S}$, and $x\in\mathbb{H}^3$ then 
\[ \sum_{\g\in\mathcal{S}}\frac{1}{1+\exp(\mathfrak{D}_\G\dis(x,\g x))} \le\frac{1}{2}.\]
In particular we have at least $k-1$ distinct elements $\{\g_{i_j}\}_{1\le j\le k-1}$ 
of $\mathcal{S}$ that satisfies $\dis(x,\g_{i_j} x)\ge\log(3)/\mathfrak{D}_\G,$ and at least one element
$\g_{i_j}$ with $\dis(x,\g_{i_j} x)\ge\log(2k-1)/\mathfrak{D}_\G.$
\end{thm}
The following is a useful corollary of Theorem \ref{critical}, stated
here for the case of $\G$ is a free group of rank $2$.
\begin{cor}\label{cor-2}
Let $\mathcal{S}=\{\g_1,\g_2\}$ be a generating set for a free non-elementary Kleinian group $\G$. Let
$x\in\mathbb{H}^3$. Then
\[ \dis(x,\g_2 x)\ge\frac{1}{\mathfrak{D}_{\G}}\log\left(\frac{e^{\mathfrak{D}_\G\dis(x,\g_1 x)}+3}{e^{\mathfrak{D}_\G\dis(x,\g_1 x)}-1}\right)\]
\end{cor}
\begin{cor}\label{cor-1}
Let $\mathcal{S}=\{\g_1,\g_2\}$ be a generating set for a free non-elementary Kleinian group $\G$. Let
$x\in\mathbb{H}^3$.
Let $m$ be any integer. Then at least one of the elements $\g'$ of 
$\mathcal{S}'=\{\g^m_1\g_2,\g^{m+1}_1\g_2\}$ 
satisfies $\dis(x,\g' x)\ge\log 3/\mathfrak{D}_\G$.
\end{cor}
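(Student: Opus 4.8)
The plan is to recognize the pair $\mathcal{S}'=\{\g_1^m\g_2,\,\g_1^{m+1}\g_2\}$ as itself a free generating set of $\G$, and then feed it directly into Theorem \ref{aritical}.

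First I would verify that $\mathcal{S}'$ is again a basis of the free group $\G$. This follows from a short sequence of elementary Nielsen transformations: starting from the basis $\{\g_1,\g_2\}$, replace $\g_2$ by $\g_1^m\g_2$ to get the basis $\{\g_1,\g_1^m\g_2\}$, and then replace the first generator $\g_1$ by $\g_1\cdot(\g_1^m\g_2)=\g_1^{m+1}\g_2$, obtaining the basis $\{\g_1^{m+1}\g_2,\,\g_1^m\g_2\}=\mathcal{S}'$. (Equivalently, one checks directly that $(\g_1^{m+1}\g_2)(\g_1^m\g_2)^{-1}=\g_1$ and $\g_1^{-m}(\g_1^m\g_2)=\g_2$, so $\langle\mathcal{S}'\rangle=\G$, and a two-element generating set of a rank-$2$ free group is automatically free.) In particular $\G$ is free of rank $2$ with free generating set $\mathcal{S}'$.

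Next I would apply Theorem \ref{aritical} to the free generating set $\mathcal{S}'$ at the point $x$, which gives
\[
\frac{1}{1+\exp\!\big(D_\G\dis(x,\g_1^m\g_2\,x)\big)}+\frac{1}{1+\exp\!\big(D_\G\dis(x,\g_1^{m+1}\g_2\,x)\big)}\le\frac12 .
\]
Then I would argue by contradiction: if both elements $\g'\in\mathcal{S}'$ satisfied $\dis(x,\g' x)<\log 3/D_\G$, then $\exp(D_\G\dis(x,\g' x))<3$ for each, so each of the two summands above would be strictly larger than $\tfrac14$, forcing the left-hand side to exceed $\tfrac12$ and contradicting the displayed inequality. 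Hence at least one $\g'\in\mathcal{S}'$ satisfies $\dis(x,\g' x)\ge\log 3/D_\G$, as claimed.

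There is no serious obstacle here; the only point requiring care is the choice of which pair to hand to Theorem \ref{aritical} — it is tempting to try to relate $\g_1$ to $\g_1^m\g_2$, but the clean route is to observe that $\mathcal{S}'$ is by itself a free basis. One could instead invoke Corollary \ref{aor-2} with $\g_1\rightsquigarrow\g_1^m\g_2$ and $\g_2\rightsquigarrow\g_1^{m+1}\g_2$ and then check that the right-hand side of that inequality is at least $\log 3/D_\G$ whenever $\dis(x,\g_1^m\g_2\,x)\le\log 3/D_\G$; this works too, but costs a small monotonicity computation that the two-term estimate above avoids.
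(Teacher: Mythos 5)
Your proof is correct and is essentially the argument the paper intends (the corollary is stated there without proof as a direct consequence of Theorem \ref{aritical}): you verify that $\{\g_1^m\g_2,\g_1^{m+1}\g_2\}$ is again a free basis of the rank-two free group $\G$ and apply the theorem's inequality, the two-term bound $\le\frac12$ forcing at least one displacement to be $\ge\log 3/D_\G$. The Nielsen-transformation (or Hopfian) justification that $\mathcal{S}'$ is a basis is exactly the point that needs checking, and you handle it correctly.
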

We will need a much stronger estimates which will relates both the rank of $\G$ and elements of generators. This is needed in order to
prove extension theorem from fixed rank to arbitrary rank.
\begin{lem}[Rank-length growth estimate]\label{critical-k}
Let $\mathcal{S}=\{\g_1,\g_2,...,\g_k\}$ be a generating set for a free non-elementary Kleinian group $\G$ of rank $k$. Let
$x\in\mathbb{H}^3$. Then for every $i$ there exists some $j\not= i$ such that,
\[ \dis(x,\g_{j} x)\ge\frac{1}{\mathfrak{D}_{\G}}\log \left(\frac{(2k-3)e^{\mathfrak{D}_\G\dis(x,\g_i x)}+(2k-1)}{e^{\mathfrak{D}_\G\dis(x,\g_i x)}-1}\right).\]
\end{lem}
\begin{proof}
Assume otherwise, 
\begin{multline*}
\sum_{\g\in\mathcal{S}}\frac{1}{1+e^{\mathfrak{D}_\G\dis(x,\g x)}}\\
>\sum_{1\le l \le k-1}\frac{1}
{1+\left(\frac{(2k-3)e^{\mathfrak{D}_\G\dis(x,\g_i x)}+(2k-1)}{e^{\mathfrak{D}_\G\dis(x,\g_i x)}-1}\right)}+\frac{1}{1+e^{\mathfrak{D}_\G\dis(x,\g_i x)}}\\
>\frac{(e^{\mathfrak{D}_\G\dis(x,\g_i x)}-1)(k-1)}{(e^{\mathfrak{D}_\G\dis(x,\g_i x)}+1)(2k-2)}+\frac{1}{1+e^{\mathfrak{D}_\G\dis(x,\g_i x)}}>\frac{1}{2},
\end{multline*}
which is a contradiction to Theorem \ref{critical}.
\end{proof}
\section{Relations of Trace, Fixed Points, and Hausdorff Dimension}
The relationships of fixed points of generating sets of a given sequence of Schottky groups $\G_n$
and the Hausdorff dimensions of $\Lambda_{\G_n}$ will be stated in this section. 
Proofs of these can be found in \cite{HS} which is stated for general rank $k.$\par
The main is idea is to find a relationship between the distribution function of the fixed points of one of the generators in terms
of the translation length growth of the other generators and the Hausdorff dimension of its limit set. By obtaining
these types of relationships we will be able to construct a new set of generators of the
given generating set of generators so that it will have predetermined distribution of its fixed points. These newly constructed set of
generators will be a crucial part in our proof of the main theorem.\par
Let $\{\G_n\}$ be a sequence of rank $k$ Schottky groups with $\mathfrak{D}_n\to 0$ generated by $\alpha_{1,n},\beta_{i,n}\in\Psl(2,\mathbb{C}), 2\le i\le k$
in $\mathbb{H}^3$ with 
$\alpha_{1,n}=\bigl(\begin{smallmatrix} \lambda_{1,n} & 0\\0 & \lambda^{-1}_{1,n}\end{smallmatrix}\bigr)$, $|\lambda_{1,n}|>1$.
Set $\beta_{i,n}=\bigl(\begin{smallmatrix} a_{i,n} & b_{i,n}\\c_{i,n} & d_{i,n}\end{smallmatrix}\bigr).$ \par
Throughout this section we assume that there exists $M>0$ such that $T_{\alpha_{1,n}}<M$ for all $n$. Set $\mathfrak{D}_n=\mathfrak{D}_{\G_n}.$ Let $\tr=\text{trace}.$\par
First we will state Corollary \ref{cor-2} in the trace form. 
\begin{prop}\label{trace}\cite{HS}
Suppose there exists $\Delta>0$ such that $Z_{\beta_{i,n}}>\Delta.$
There exists $\rho>0$ depending on $\Delta$, such that
\[|\tr(\beta_{i,n})|>\rho\left(\frac{|\lambda_{1,n}|^{2\mathfrak{D}_n}+3}{|\lambda_{1,n}|^{2\mathfrak{D}_n}-1}\right)^{\frac{1}{2\mathfrak{D}_n}}\]
for large $n$.
\end{prop}
\begin{proof}
Let $T_{i,n}$ be the translation length of $\beta_{i,n}$ and $R_{i,n}=\dis(\mathcal{L}_{\alpha_{1,n}},\mathcal{L}_{\beta_{i,n}}).$ 
Let $x_{n}$ be a point on axis of $\alpha_{1,n}$ which is the nearest point of $\mathcal{L}_{\alpha_{1,n}}$ to $\mathcal{L}_{\beta_{i,n}}.$ 
By triangle inequality, 
$T_{i,n}\ge \dis(x_n,\beta_{i,n}x_n)-2R_{i,n}$. Here the triangle inequality is:
$\text{dist}(x_n,\beta_{i,n}x_n)\le\text{dist}(\beta_{i,n}x_n,\beta_{i,n}y_n)+\text{dist}(\beta_{i,n}y_n,y_n)+\text{dist}(x_n,y_n),$ where $y_n$ is the closest point on $\mathcal{L}_{\beta_{i,n}}$ to $\mathcal{L}_{\alpha_{1,n}}.$\par
Note for sufficiently large $T_{i,n}$, we have for some positive constant
$c>0,$ $|\tr^2(\beta_{i,n})|>ce^{T_{i,n}}.$ 
Now for large $n,$ from Corollary \ref{cor-2},
\[|\tr^2(\beta_{i,n})|>c\left(\frac{|\lambda_{1,n}|^{2\mathfrak{D}_n}+3}{|\lambda_{1,n}|^{2\mathfrak{D}_n}-1}\right)^{\frac{1}{\mathfrak{D}_n}}
\left(e^{-2\dis(\mathcal{L}_{\alpha_{1,n}},\mathcal{L}_{\beta_{i,n}})}\right). \] 
The first factor on the right hand side of the last inequality is by taken exponential of inequality $T_{i,n}\ge\text{dist}(x_n,\beta_{i,n}x_n)-2R_{i,n},$ and using Corollary 3.2 for $e^{\text{dist}(x_n,\beta_{i,n}x_n)}.$ The second factor of right hand side comes from
the equality
$e^{-2R_{i,n}}=e^{-2\text{dist}(\mathcal{L}_{\alpha_{1,n}},\mathcal{L}_{\beta_{i,n}})}.$
Now $Z_{\beta_{i,n}}>\Delta$ implies that $\dis(\mathcal{L}_{\alpha_{1,n}},\mathcal{L}_{\beta_{i,n}})<M$ for some $M>0.$ Hence the result follows.
\end{proof}
We will also need to state the rank $k$ form of the above estimates that relates the rank of $\G$ as well. 
\begin{prop}\label{trace-rank}
Let $\G_n$ be a sequence of rank $k$ free group generated by $<\g_{1,n},...,\g_{k,n}>$ with $\g_{i,n}$ having fixed points $0,\infty.$
Suppose there exists $\Delta>0$ such that $Z_{<\g_{1,n},...,\g_{k,n}>}>\Delta.$
There exists $j\not=i$, and $\rho>0$ depending on $\Delta$, such that
\[|\tr(\g_{j,n})|>\rho\left(\frac{(2k-3)|\lambda_{\g_{i,n}}|^{2\mathfrak{D}_n}+(2k-1)}{|\lambda_{\g_{i,n}}|^{2\mathfrak{D}_n}-1}\right)^{\frac{1}{2\mathfrak{D}_n}}\]
for large $n$.
\end{prop}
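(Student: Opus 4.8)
The plan is to mirror the proof of Proposition \ref{tracea}, substituting the rank-$k$ estimate of Lemma \ref{aritical-k} for the rank-$2$ estimate of Corollary \ref{aor-2}. First I would fix the index $i$ and let $x_n$ be the point on the axis $\mathcal{L}_{\g_{i,n}}$ nearest to the axis $\mathcal{L}_{\g_{j,n}}$, where $j\not= i$ is the index produced by Lemma \ref{aritical-k} applied at the point $x_n$ (note the choice of $j$ may depend on $n$, but since there are finitely many indices we may pass to a subsequence along which $j$ is constant — or simply keep $j=j(n)$ throughout, as the conclusion is stated for "some $j\not= i$"). Writing $T_{j,n}$ for the translation length of $\g_{j,n}$ and $R_{j,n}=\dis(\mathcal{L}_{\g_{i,n}},\mathcal{L}_{\g_{j,n}})$, the triangle inequality gives $T_{j,n}\ge \dis(x_n,\g_{j,n}x_n)-2R_{j,n}$.

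Next I would invoke Lemma \ref{aritical-k} with the generating set $\{\g_{1,n},\ldots,\g_{k,n}\}$, the element $\g_{i,n}$, and the point $x_n$. Since $\g_{i,n}$ has fixed points $0,\infty$ and axis the vertical geodesic, $\dis(x_n,\g_{i,n}x_n)=T_{i,n}$, and $|\lambda_{\g_{i,n}}|^{2}\asymp e^{T_{i,n}}$ up to the usual bounded multiplicative error; more precisely $e^{T_{i,n}}=|\lambda_{\g_{i,n}}|^{2}$ exactly for the diagonal element. Thus $e^{D_n\dis(x_n,\g_{i,n}x_n)}=|\lambda_{\g_{i,n}}|^{2D_n}$, and Lemma \ref{aritical-k} yields
\[
\dis(x_n,\g_{j,n}x_n)\ge\frac{1}{D_n}\log\left(\frac{(2k-3)|\lambda_{\g_{i,n}}|^{2D_n}+(2k-1)}{|\lambda_{\g_{i,n}}|^{2D_n}-1}\right).
\]
Combining with the triangle-inequality bound on $T_{j,n}$ and the standard relation $|\tr^{2}(\g_{j,n})|>c\,e^{T_{j,n}}$ valid for large $n$ (i.e.\ for large translation length), I get
\[
|\tr^{2}(\g_{j,n})|>c\left(\frac{(2k-3)|\lambda_{\g_{i,n}}|^{2D_n}+(2k-1)}{|\lambda_{\g_{i,n}}|^{2D_n}-1}\right)^{\frac{1}{D_n}}e^{-2R_{j,n}}.
\]

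Finally I would use the hypothesis $Z_{<\g_{1,n},\ldots,\g_{k,n}>}>\Delta$ to bound $R_{j,n}$. By definition of $Z$, the fixed points of $\g_{j,n}$ are separated from each other and from $0,\infty$ (in both the ordinary and the inverted metric) by a definite amount depending only on $\Delta$; this forces the axis $\mathcal{L}_{\g_{j,n}}$ to stay within a bounded hyperbolic distance $M=M(\Delta)$ of the vertical axis $\mathcal{L}_{\g_{i,n}}$, so $e^{-2R_{j,n}}$ is bounded below by a positive constant depending only on $\Delta$. Absorbing all constants into a single $\rho=\rho(\Delta)>0$ and taking square roots gives the claimed inequality for large $n$. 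The main obstacle, such as it is, is the bookkeeping around the index $j$ possibly varying with $n$ and making the quantitative link between $Z_{<\g_{1,n},\ldots,\g_{k,n}>}>\Delta$ and the uniform bound on $R_{j,n}$ — this is essentially the same elementary hyperbolic-geometry estimate used in Proposition \ref{tracea}, and is where the precise form of the definition of $Z_{<\alpha_1,\beta_i>}$ (controlling $|z_{\g,\pm}|$ and $|z_{\g,\pm}|^{-1}$, not just $Z_{\g}$) is needed.
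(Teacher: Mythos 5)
Your proposal is essentially the paper's approach: Proposition \ref{trace-rank} is stated there without proof as the rank-$k$ analogue of Proposition \ref{tracea}, and the intended argument is exactly what you describe — replace Corollary \ref{aor-2} by Lemma \ref{aritical-k}, use $T_{j,n}\ge \dis(x_n,\g_{j,n}x_n)-2\dis(x_n,\mathcal{L}_{\g_{j,n}})$ together with $|\tr^2(\g_{j,n})|>c\,e^{T_{j,n}}$, note $e^{D_n\dis(x_n,\g_{i,n}x_n)}=|\lambda_{\g_{i,n}}|^{2D_n}$ for $x_n$ on the vertical axis, and bound the axis distance via $Z>\Delta$. One small repair: as written your choice of $x_n$ is circular — you define $x_n$ as the point of $\mathcal{L}_{\g_{i,n}}$ nearest to $\mathcal{L}_{\g_{j,n}}$, while $j$ is produced by applying Lemma \ref{aritical-k} at $x_n$. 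Fix this by choosing $x_n$ independently of $j$, e.g.\ the point of the vertical axis at Euclidean height $1$: the hypothesis $Z_{<\g_{1,n},\dots,\g_{k,n}>}>\Delta$ keeps the endpoints of every $\mathcal{L}_{\g_{l,n}}$, $l\ne i$, in the annulus $\Delta<|z|<\Delta^{-1}$ and separated by a definite amount, so every such axis passes within a distance $M(\Delta)$ of that point, and hence $\dis(x_n,\mathcal{L}_{\g_{j,n}})\le M(\Delta)$ for whichever $j$ the lemma returns (and for each $n$). With that adjustment the rest of your argument goes through as in the paper's proof of Proposition \ref{tracea}.
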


\begin{cor}\label{trace-rank-dis}
We can state Proposition \ref{trace-rank} with condition $Z_{<\g_{1,n},...,\g_{k,n}>}$ droped as follows,
\[|\tr^2(\g_{j,n})|>c\left(\frac{(2k-3)|\lambda_{\g_{i,n}}|^{2\mathfrak{D}_n}+(2k-1)}{|\lambda_{\g_{i,n}}|^{2\mathfrak{D}_n}-1}\right)^{\frac{1}{\mathfrak{D}_n}}
\left(e^{-2\dis(\mathcal{L}_{\g_{i,n}},\mathcal{L}_{\g_{j,n}})}\right). \] 
\end{cor}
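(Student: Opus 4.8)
Since the statement is precisely the estimate underlying Proposition \ref{trace-rank} before the hypothesis $Z_{<\g_{1,n},\ldots,\g_{k,n}>}>\Delta$ is used, the plan is to rerun the proof of Proposition \ref{tracea}, with Corollary \ref{aor-2} replaced by its rank-$k$ sharpening, Lemma \ref{aritical-k}. The only effect of dropping the hypothesis on $Z$ is that we may no longer bound $\dis(\mathcal{L}_{\g_{i,n}},\mathcal{L}_{\g_{j,n}})$ by an absolute constant, so that distance survives in the conclusion as the factor $e^{-2\dis(\mathcal{L}_{\g_{i,n}},\mathcal{L}_{\g_{j,n}})}$.

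Concretely I would fix $i$ and, for each large $n$, let $j=j_n\ne i$ be the index furnished by Lemma \ref{aritical-k} when that lemma is applied at the point $x_n\in\mathcal{L}_{\g_{i,n}}$ which is the foot on $\mathcal{L}_{\g_{i,n}}$ of the common perpendicular to $\mathcal{L}_{\g_{j,n}}$; since there are only finitely many generators, such a self-consistent pair $(j_n,x_n)$ can be arranged, after relabeling and, if one wants $j$ independent of $n$, passing to a subsequence. Because $x_n$ lies on the axis of $\g_{i,n}$, which has multiplier $\lambda_{\g_{i,n}}$ with $|\lambda_{\g_{i,n}}|>1$, we have $\dis(x_n,\g_{i,n}x_n)=T_{\g_{i,n}}=2\log|\lambda_{\g_{i,n}}|$, hence $e^{D_n\dis(x_n,\g_{i,n}x_n)}=|\lambda_{\g_{i,n}}|^{2D_n}$, and Lemma \ref{aritical-k} gives
\[
\dis(x_n,\g_{j,n}x_n)\ \ge\ \frac{1}{D_n}\log\!\left(\frac{(2k-3)|\lambda_{\g_{i,n}}|^{2D_n}+(2k-1)}{|\lambda_{\g_{i,n}}|^{2D_n}-1}\right).
\]
On the other hand, traveling from $x_n$ along the common perpendicular to $\mathcal{L}_{\g_{j,n}}$, then along $\mathcal{L}_{\g_{j,n}}$ by its translation length, then back, the triangle inequality gives $\dis(x_n,\g_{j,n}x_n)\le T_{\g_{j,n}}+2\dis(\mathcal{L}_{\g_{i,n}},\mathcal{L}_{\g_{j,n}})$, so
\[
T_{\g_{j,n}}\ \ge\ \frac{1}{D_n}\log\!\left(\frac{(2k-3)|\lambda_{\g_{i,n}}|^{2D_n}+(2k-1)}{|\lambda_{\g_{i,n}}|^{2D_n}-1}\right)-2\dis(\mathcal{L}_{\g_{i,n}},\mathcal{L}_{\g_{j,n}}).
\]
As $D_n\to0$ and $|\lambda_{\g_{i,n}}|>1$ the first term on the right tends to $+\infty$; in the regime of interest $\dis(\mathcal{L}_{\g_{i,n}},\mathcal{L}_{\g_{j,n}})$ stays controlled, so $T_{\g_{j,n}}\to\infty$, and then, writing $\tr(\g_{j,n})=\mu+\mu^{-1}$ with $|\mu|^2=e^{T_{\g_{j,n}}}$, one has $|\tr^2(\g_{j,n})|\ge e^{T_{\g_{j,n}}}-2-e^{-T_{\g_{j,n}}}\ge\tfrac12 e^{T_{\g_{j,n}}}$ for $n$ large. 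Exponentiating the previous display yields
\[
|\tr^2(\g_{j,n})|\ \ge\ \tfrac12\left(\frac{(2k-3)|\lambda_{\g_{i,n}}|^{2D_n}+(2k-1)}{|\lambda_{\g_{i,n}}|^{2D_n}-1}\right)^{\!1/D_n}e^{-2\dis(\mathcal{L}_{\g_{i,n}},\mathcal{L}_{\g_{j,n}})},
\]
which is the claim, with $c=\tfrac12$.

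The step I expect to be the main obstacle is the bookkeeping around Lemma \ref{aritical-k}: the lemma is a pointwise statement, so one must make sure that the generator $j$ it selects is compatible with the basepoint being the perpendicular foot to $\mathcal{L}_{\g_{j,n}}$, since otherwise $\dis(x_n,\mathcal{L}_{\g_{j,n}})$ strictly exceeds $\dis(\mathcal{L}_{\g_{i,n}},\mathcal{L}_{\g_{j,n}})$ and one is left with a weaker factor. A secondary technicality is the degenerate situation in which $T_{\g_{j,n}}$ does not tend to infinity (equivalently $|\tr^2(\g_{j,n})|$ is not bounded below): there the displayed inequality forces $\dis(\mathcal{L}_{\g_{i,n}},\mathcal{L}_{\g_{j,n}})$ to be comparably large, so the right-hand side of the asserted estimate stays bounded and can be absorbed into the constant; and in any case, by the scheme outlined in the introduction the corollary is used only when the translation lengths in question are known to grow, so this case does not actually occur.
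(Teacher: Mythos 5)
Your route is the intended one (the paper gives no explicit proof of this corollary; it is meant to be the proof of Proposition \ref{tracea} rerun with Lemma \ref{aritical-k} in place of Corollary \ref{aor-2}), but the step you yourself flag as the main obstacle is a genuine gap, and your fix does not close it. Lemma \ref{aritical-k} produces, for a \emph{fixed} basepoint $x$, some index $j$ depending on $x$; to convert $\dis(x_n,\g_{j,n}x_n)$ into $T_{\g_{j,n}}+2\dis(\mathcal{L}_{\g_{i,n}},\mathcal{L}_{\g_{j,n}})$ you need $x_n$ to be the perpendicular foot for that same $j$. The assignment sending $j'$ to the index furnished by the lemma at the foot of the perpendicular to $\mathcal{L}_{\g_{j',n}}$ is merely a self-map of the finite set $\{1,\dots,k\}\setminus\{i\}$, and a self-map of a finite set need not have a fixed point; relabeling or passing to a subsequence in $n$ does not manufacture one. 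What your argument yields unconditionally is the inequality with $e^{-2\dis(x_n,\mathcal{L}_{\g_{j,n}})}$ for whichever $j$ the lemma hands you at the basepoint you chose, and since $\dis(x_n,\mathcal{L}_{\g_{j,n}})\ge\dis(\mathcal{L}_{\g_{i,n}},\mathcal{L}_{\g_{j,n}})$ this is strictly weaker than the stated conclusion.

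The gap is substantive rather than bookkeeping, because the coefficients $(2k-3),(2k-1)$ (as opposed to the $1,3$ of Remark 4.1.A, which one gets for every pair $\langle\g_{i,n},\g_{j,n}\rangle$ from Corollary \ref{aor-2} at the unique perpendicular foot) come from a pigeonhole at a \emph{single} basepoint. If the feet of the perpendiculars from $\mathcal{L}_{\g_{i,n}}$ to the various $\mathcal{L}_{\g_{j,n}}$ are spread far apart along $\mathcal{L}_{\g_{i,n}}$, no one point sees all the generators at distance comparable to the axis-to-axis distances, and Theorem \ref{aritical} applied at any such point only forces the generator whose foot is nearby just above the rank-two threshold, the distant ones contributing negligibly; so the full-strength bound with axis-to-axis distance cannot be extracted this way for any $j$, and configurations of exactly this widely-spread type (each $\g_{j,n}$ sitting slightly above the two-generator threshold near widely separated points of the axis, with small critical exponent) indicate the statement needs an additional hypothesis pinning the feet together. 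In the places the corollary is actually used (e.g.\ Lemma \ref{k-finite}) the generators have been normalized so that $|\lambda_{\alpha_{1}}|^{-2}\le|\zeta_{\g_j}|,|\eta_{\g_j}|\le 1$, which confines all the perpendicular feet to a bounded neighborhood of one point of $\mathcal{L}_{\g_{i,n}}$; under such a normalization your argument does go through, with the bounded discrepancy between $\dis(x_n,\mathcal{L}_{\g_{j,n}})$ and $\dis(\mathcal{L}_{\g_{i,n}},\mathcal{L}_{\g_{j,n}})$ absorbed into $c$. So you should either add that hypothesis (or state the conclusion with $\dis(x,\mathcal{L}_{\g_{j,n}})$ for a fixed $x\in\mathcal{L}_{\g_{i,n}}$), or supply an actual argument for your ``self-consistent pair''; as written that sentence is unjustified. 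A secondary point: the passage $|\tr^2(\g_{j,n})|\ge\tfrac12 e^{T_{\g_{j,n}}}$ requires $T_{\g_{j,n}}$ bounded below, and your dismissal of the degenerate case only shows the right-hand side stays bounded, not that it is exceeded by a possibly tiny $|\tr^2(\g_{j,n})|$; this too should be tied to the selected $j$, not waved off by appeal to how the corollary will later be used.
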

\begin{remb}\label{4A}
Without assuming bounds on $Z_{\beta_{i,n}}$ we can state above Proposition \ref{trace} as,
\[|\tr^2(\beta_{i,n})|>c\left(\frac{|\lambda_{1,n}|^{2\mathfrak{D}_n}+3}{|\lambda_{1,n}|^{2\mathfrak{D}_n}-1}\right)^{\frac{1}{\mathfrak{D}_n}}
\left(e^{-2\dis(\mathcal{L}_{\alpha_{1,n}},\mathcal{L}_{\beta_{i,n}})}\right). \] 
If $\dis(\mathcal{L}_{\alpha_{1,n}},\mathcal{L}_{\alpha_{1,n}\beta_{i,n}})<\epsilon$ then there exists $\delta>0$ such that,
\[|\tr(\beta_{i,n})|>\rho\left(\frac{|\lambda_{1,n}|^{2\mathfrak{D}_n}+3}{|\lambda_{1,n}|^{2\mathfrak{D}_n}-1}\right)^{\frac{1}{2\mathfrak{D}_n}}\]
for large $n$.
\end{remb}
Following lemmas and corollaries are given in \cite{HS} as Lemma $4.2$, Corollary $4.4$ and $4.6$. Note that all these results in \cite{HS} are originally stated for general rank $k.$ The proofs are verbatim to the proofs give in \cite{HS} with obvious notation modifications, hence omitted.\par
The results are estimates of convergence rates of fixed points of all generators of $\G_n$  in terms of its limit set $\Lambda_n$ 
Hausdroff dimension.
Set,
\[\omega(k_{i,n},l_{i,n}) =
\left|z_{\alpha^{k_{i,n}}_{1,n}\beta_{i,n}\alpha^{l_{i,n}}_{1,n},\pm}-\zeta_{\alpha^{k_{i,n}}_{1,n}\beta_{i,n}\alpha^{l_{i,n}}_{1,n}}\right|+
\left|z_{\alpha^{k_{i,n}}_{1,n}\beta_{i,n}\alpha^{l_{i,n}}_{1,n},\mp}-\eta_{\alpha^{k_{i,n}}_{1,n}\beta_{i,n}\alpha^{l_{i,n}}_{1,n}}\right|.\] 
Note that each given $k_{i,n},l_{i,n}$ provides a Nielsen transformation of the generators. We will determine what $k_{i,n},l_{i,n}$ 
are needed during the proof in  later part of the paper. The idea is that, eventually we will need to choose some collection of generators which are Nielsen transformations of $\alpha_{1,n},\beta_{i,n}$ based on the values of $k_{i,n},l_{i,n}.$ Determination on the constraints of $k_{i,n},l_{i,n}$ will require extensive analysis
of both fixed points and centers of circles of generators. Here the definition of $\omega(k_{i,n},l_{i,n})$ is to measures the distance between fixed points
and centers of circles of generators relative to $k_{i,n},l_{i,n}.$ \par
Indices $k_{i,n},l_{i,n}$ are introduced to determine relationship between fixed points convergence rates with respect to Nielsen transformation of $\beta_{i,n}$ by
$\alpha_{1,n}.$ 
Proofs of following statements can be found in \cite{HS}.
\begin{lem}\label{fix-trace}\cite{HS}
Suppose there exists $\Delta>0,M>0$ such that $Z_{<\alpha_{1,n},\beta_{i,n}>}>\Delta$ and $T_{\alpha_{1,n}}<M$ for all $n.$ 
Let $k_{i,n},l_{i,n}$ be any integers such that $T_{{\alpha_{1,n}}^{k_{i,n}}},$ $T_{{\alpha_{1,n}}^{l_{i,n}}}<M$. Then
there exists a constant $\rho>0$ such that, 
\[\omega(k_{i,n},l_{i,n})
<\frac{\rho}{|\tr(\beta_n)||\lambda^{l_n-k_n}_n|}\]
for large $n.$
\end{lem}
Note that in general one could have $T_{\alpha_{i,n}}\to 0,$ which implies that we could have $k_{i,n},l_{i,n}\to\infty.$ In fact, this is one of the degenerate cases that we will consider in Section $7.$ 
\begin{cor}\label{fix-bound-1}\cite{HS}
Suppose there exists $\Delta>0,M>0$ such that $Z_{<\alpha_{1,n},\beta_{i,n}>}>\Delta$ and $T_{\alpha_n}<M$ for all $n.$ 
Let $k_{i,n},l_{i,n}$ be any integers such that $T_{{\alpha_{1,n}}^{k_{i,n}}},$ $T_{{\alpha_{1,n}}^{l_{i,n}}}<M$. Then
for any $\delta>0$ there exists $\epsilon>0$ such that if $\mathfrak{D}_n<\epsilon$ then,
\[\omega(k_{i,n},l_{i,n})
 <\delta(|\lambda_{1,n}|^2-1).\]
\end{cor}
\begin{rem}\label{rem-2}
Note that condition $Z_{<\alpha_{1,n},\beta_{i,n}>}>\Delta$ in Lemma \ref{fix-trace} can be replaced
with conditions $|\tr(\beta_{i,n})|\to\infty$ and $|\tr(\beta_{i,n})|\le|c_{i,n}|.$
\end{rem}
\begin{cor}\label{2-tr-fixed}\cite{HS}
Suppose there exists $\Delta>0,M>0$ such that $Z_{<\alpha_{1,n},\beta_{i,n}>}>\Delta$ and $T_{\alpha_{1,n}}<M$ for all $n.$ 
Let $k_{i,n},l_{i,n}$ to be any integers such that $T_{{\alpha_{1,n}}^{k_{i,n}}},$ $T_{{\alpha_{1,n}}^{l_{i,n}}}<M$.
Then there exists constants $\delta, \rho>0$ such that, if $|\tr(\alpha^{k_{i,n}}_{1,n}\beta_{i,n}\alpha^{l_{i,n}}_{1,n})|$ $\to\infty$
then,
\[\omega(k_{i,n},l_{i,n})
<\frac{\rho}{|\tr(\alpha^{k_{i,n}}_{1,n}\beta_{i,n}\alpha^{l_{i,n}}_{1,n})||\tr(\beta_{i,n})|}\]
for all $\mathfrak{D}_n<\delta$.
\end{cor}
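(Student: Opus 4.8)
The statement to prove is Corollary \ref{2-tr-fixed}: under the hypotheses $Z_{<\alpha_{1,n},\beta_{i,n}>}>\Delta$, $T_{\alpha_{1,n}}<M$, and with $k_{i,n},l_{i,n}$ integers keeping $T_{\alpha_{1,n}^{k_{i,n}}},T_{\alpha_{1,n}^{l_{i,n}}}<M$, one has
\[
\omega(k_{i,n},l_{i,n})<\frac{\rho}{|\tr(\alpha^{k_{i,n}}_{1,n}\beta_{i,n}\alpha^{l_{i,n}}_{1,n})||\tr(\beta_{i,n})|}
\]
whenever $|\tr(\alpha^{k_{i,n}}_{1,n}\beta_{i,n}\alpha^{l_{i,n}}_{1,n})|\to\infty$ and $D_n<\delta$.

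My plan is to reduce this to Lemma \ref{fix-tracea}, which already gives $\omega(k_{i,n},l_{i,n})<\rho/(|\tr(\beta_n)||\lambda_n^{l_n-k_n}|)$. So the whole task is to show that the extra factor $|\tr(\alpha^{k_{i,n}}_{1,n}\beta_{i,n}\alpha^{l_{i,n}}_{1,n})|$ in the denominator is absorbed by the gain $|\lambda_{1,n}^{l_{i,n}-k_{i,n}}|$, i.e. that
\[
|\tr(\alpha^{k_{i,n}}_{1,n}\beta_{i,n}\alpha^{l_{i,n}}_{1,n})|\asymp |\lambda_{1,n}^{k_{i,n}+l_{i,n}}|\,|\tr(\beta_{i,n})|\quad\text{or at least}\quad \lesssim |\lambda_{1,n}^{l_{i,n}-k_{i,n}}|,
\]
up to a constant depending only on $\Delta,M$. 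First I would write $\alpha_{1,n}^{k}\beta_{i,n}\alpha_{1,n}^{l}$ explicitly as a matrix product: with $\alpha_{1,n}=\mathrm{diag}(\lambda_{1,n},\lambda_{1,n}^{-1})$ and $\beta_{i,n}=\bigl(\begin{smallmatrix}a_{i,n}&b_{i,n}\\ c_{i,n}&d_{i,n}\end{smallmatrix}\bigr)$, the product has trace $\lambda_{1,n}^{k+l}a_{i,n}+\lambda_{1,n}^{-(k+l)}d_{i,n}$. The key point is that $T_{\alpha_{1,n}}<M$ together with $T_{\alpha_{1,n}^{k}},T_{\alpha_{1,n}^{l}}<M$ force $|\lambda_{1,n}^{k}|,|\lambda_{1,n}^{l}|$ to be bounded above and below by constants depending only on $M$; hence $|\lambda_{1,n}^{k+l}|\asymp 1$, so $|\tr(\alpha_{1,n}^{k}\beta_{i,n}\alpha_{1,n}^{l})|\asymp |a_{i,n}\lambda_{1,n}^{2k}+d_{i,n}|$ roughly, and in particular is $\asymp|\tr(\beta_{i,n})|$ whenever it tends to infinity — since $\beta_{i,n}$'s entries are controlled through $Z_{<\alpha_{1,n},\beta_{i,n}>}>\Delta$ and the relation $\eta_{\beta_{i,n}}=-d_{i,n}/c_{i,n}$, $\zeta_{\beta_{i,n}}=a_{i,n}/c_{i,n}$, which keep $|a_{i,n}|,|d_{i,n}|\asymp|c_{i,n}|$ while $|\tr(\beta_{i,n})|\asymp|c_{i,n}|$ as well (this is exactly the kind of estimate underlying Proposition \ref{tracea}).

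Once that comparison is in hand, I would also need $|\lambda_{1,n}^{l_{i,n}-k_{i,n}}|\asymp 1$, which again follows from the boundedness of $|\lambda_{1,n}^{k}|$ and $|\lambda_{1,n}^{l}|$. Then Lemma \ref{fix-tracea} reads $\omega(k_{i,n},l_{i,n})<\rho'/|\tr(\beta_{i,n})|$, and since $|\tr(\alpha^{k}_{1,n}\beta_{i,n}\alpha^{l}_{1,n})|\le C|\tr(\beta_{i,n})|$ for a constant $C=C(\Delta,M)$, we get
\[
\frac{\rho'}{|\tr(\beta_{i,n})|}\le \frac{C\rho'}{|\tr(\alpha^{k}_{1,n}\beta_{i,n}\alpha^{l}_{1,n})|}\le\frac{C\rho'\,C'}{|\tr(\alpha^{k}_{1,n}\beta_{i,n}\alpha^{l}_{1,n})|\,|\tr(\beta_{i,n})|}\cdot\frac{|\tr(\beta_{i,n})|}{C'},
\]
so I must be slightly careful: the cleanest route is to bound $\omega$ by $\rho'/|\tr(\beta_{i,n})|$ and then by $\rho'|\tr(\beta_{i,n})|/(|\tr(\beta_{i,n})|^2)$, combined with $|\tr(\alpha^k\beta\alpha^l)|\le C|\tr(\beta)|$, to extract the product form $\rho/(|\tr(\alpha^k\beta\alpha^l)||\tr(\beta)|)$. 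Equivalently, use $|\tr(\beta_{i,n})|\ge c^{-1}|\tr(\alpha^k\beta\alpha^l)|$ once more, so that $1/|\tr(\beta_{i,n})| = |\tr(\beta_{i,n})|/|\tr(\beta_{i,n})|^2 \le C|\tr(\beta_{i,n})|/(|\tr(\beta_{i,n})||\tr(\alpha^k\beta\alpha^l)|)$ — tidying the constants gives the claim with a new $\rho$. The role of $D_n<\delta$ enters only through Lemma \ref{fix-tracea} (and Corollary \ref{fixsbound-1}), whose validity for small critical exponent we may invoke directly.

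The main obstacle is the two-sided trace comparison $|\tr(\alpha^{k_{i,n}}_{1,n}\beta_{i,n}\alpha^{l_{i,n}}_{1,n})|\asymp|\tr(\beta_{i,n})|$ under the stated hypotheses: I must rule out cancellation between $\lambda_{1,n}^{k+l}a_{i,n}$ and $\lambda_{1,n}^{-(k+l)}d_{i,n}$, and I must confirm that the hypothesis $Z_{<\alpha_{1,n},\beta_{i,n}>}>\Delta$ indeed forces $|a_{i,n}|,|d_{i,n}|,|c_{i,n}|$ to be mutually comparable for large $n$ — this is where the geometry (the Schottky disks of $\beta_{i,n}$ being bounded away from $0,\infty$ by $\Delta$, so that neither fixed point of $\beta_{i,n}$ nor its image degenerates) does the work. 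I expect this comparison is already essentially contained in the proof of Proposition \ref{tracea} and Lemma \ref{fix-tracea} in \cite{HS}, so the corollary is genuinely a short deduction; the only subtlety is organizing the constants so the final bound appears in the symmetric product form $\rho/(|\tr(\alpha^k\beta\alpha^l)||\tr(\beta)|)$ rather than the single-trace form.
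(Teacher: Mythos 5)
There is a genuine quantitative gap in your final reduction. Lemma \ref{fix-tracea}, after you note $|\lambda_{1,n}^{k_{i,n}}|,|\lambda_{1,n}^{l_{i,n}}|\asymp 1$, only yields $\omega(k_{i,n},l_{i,n})\lesssim 1/|\tr(\beta_{i,n})|$, whereas the corollary asserts $\omega\lesssim 1/\bigl(|\tr(\alpha^{k_{i,n}}_{1,n}\beta_{i,n}\alpha^{l_{i,n}}_{1,n})|\,|\tr(\beta_{i,n})|\bigr)$, which under the hypothesis $|\tr(\alpha^{k_{i,n}}_{1,n}\beta_{i,n}\alpha^{l_{i,n}}_{1,n})|\to\infty$ is asymptotically strictly smaller; no amount of constant-juggling can turn the weaker bound into the stronger one. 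Concretely, your manipulation $1/|\tr(\beta_{i,n})|=|\tr(\beta_{i,n})|/|\tr(\beta_{i,n})|^2\le C\,|\tr(\beta_{i,n})|/\bigl(|\tr(\beta_{i,n})||\tr(\alpha^{k}_{1,n}\beta_{i,n}\alpha^{l}_{1,n})|\bigr)$ simplifies to $C/|\tr(\alpha^{k}_{1,n}\beta_{i,n}\alpha^{l}_{1,n})|$; the leftover factor $|\tr(\beta_{i,n})|$ in the numerator is silently dropped, so what you actually prove is weaker than the statement by the unbounded factor $|\tr(\beta_{i,n})|$. (Your two-sided comparison $|\tr(\alpha^{k}\beta\alpha^{l})|\asymp|\tr(\beta)|$ does not rescue this, since then the target is $\asymp 1/|\tr(\beta_{i,n})|^2$.)

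The missing idea is that the hypothesis $|\tr(\alpha^{k_{i,n}}_{1,n}\beta_{i,n}\alpha^{l_{i,n}}_{1,n})|\to\infty$ must be used inside the fixed-point estimate, not merely as a bookkeeping device. Writing $\gamma_n=\alpha^{k_{i,n}}_{1,n}\beta_{i,n}\alpha^{l_{i,n}}_{1,n}=\bigl(\begin{smallmatrix}\lambda^{k+l}a_{i,n}&\lambda^{k-l}b_{i,n}\\ \lambda^{l-k}c_{i,n}&\lambda^{-k-l}d_{i,n}\end{smallmatrix}\bigr)$, one has exactly
\[
z_{\gamma_n,\pm}-\zeta_{\gamma_n}=\frac{-\tr(\gamma_n)\pm\sqrt{\tr^2(\gamma_n)-4}}{2c_{\gamma_n}},\qquad
z_{\gamma_n,\mp}-\eta_{\gamma_n}=\frac{\tr(\gamma_n)\mp\sqrt{\tr^2(\gamma_n)-4}}{2c_{\gamma_n}},
\]
and for $|\tr(\gamma_n)|\to\infty$ the small branch of the numerator has modulus $\asymp 1/|\tr(\gamma_n)|$ (expand the square root); hence $\omega(k_{i,n},l_{i,n})\asymp 1/\bigl(|\tr(\gamma_n)|\,|c_{\gamma_n}|\bigr)$ with $|c_{\gamma_n}|=|\lambda_{1,n}^{l_{i,n}-k_{i,n}}c_{i,n}|\asymp|c_{i,n}|$. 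The hypothesis $Z_{<\alpha_{1,n},\beta_{i,n}>}>\Delta$ then gives $|c_{i,n}|\asymp|\tr(\beta_{i,n})|$ (the separation of the fixed points of $\beta_{i,n}$, $|\zeta_{\beta_{i,n}}-\eta_{\beta_{i,n}}|=|\tr(\beta_{i,n})/c_{i,n}|$, is bounded above and below), which produces the product $|\tr(\gamma_n)||\tr(\beta_{i,n})|$ in the denominator. Note also that this paper does not prove the corollary at all; it is quoted from \cite{HS}, so the argument you need is the [HS]-style computation just sketched rather than a formal consequence of the statement of Lemma \ref{fix-tracea}.
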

\begin{lem}\label{fix-bound-2}\cite{HS}
Suppose there exists $\Delta>0,M>0$ such that $Z_{<\alpha_{1,n},\beta_{i,n}>}>\Delta$ and $T_{\alpha_{1,n}}<M$ for all $n.$ 
Let $k_{i,n},l_{i,n}$ to be any integers such that $T_{{\alpha_{1,n}}^{k_{i,n}}},$ $T_{{\alpha_{1,n}}^{l_{i,n}}}<M$.
Then there
exists a constant $\sigma_1,\sigma_2>0$ such that,
\[\frac{\sigma_1|\tr(\alpha^{k_{i,n}}_{1,n}\beta_{i,n}\alpha^{l_{i,n}}_{1,n})|}{|\tr(\beta_{i,n})|}\ge
|z_{\alpha^{k_{i,n}}_{1,n}\beta_{i,n}\alpha^{l_{i,n}}_{1,n},+}-z_{\alpha^{k_{i,n}}_{1,n}\beta_{i,n}\alpha^{l_{i,n}}_{1,n},-}|
\ge\frac{\sigma_2|\tr(\alpha^{k_{i,n}}_{1,n}\beta_{i,n}\alpha^{l_{i,n}}_{1,n})|}{|\tr(\beta_{i,n})|},\]
for all $n$ sufficiently large.
\end{lem}
Although next Lemma is not used in the rest of the paper but we include it here to demonstrate relations between fixed points and Hausdorff dimensions.
\begin{lem}\label{fix-bound-3}\cite{HS}
Suppose there exists $\Delta>0,M>0$ such that $Z_{<\alpha_{1,n},\beta_{i,n}>}>\Delta$ and $M^{-1}<T_{\alpha_{1,n}}<M$ for all $n.$ 
Let $k_{i,n},l_{i,n}$ to be any integers such that $T_{{\alpha_{1,n}}^{k_{i,n}}},T_{{\alpha_{1,n}}^{l_{i,n}}}<M$.
Then for at least one $J\in\{0,1\}$, 
and any integers $k'_{i,n}, l'_{i,n} $ with $|(k_{i,n}-k'_{i,n})|+|(l_{i,n}-l'_{i,n})|=J$, we have
$|z_{\alpha^{k'_{i,n}}_{1,n}\beta_{i,n}\alpha^{l'_{i,n}}_{1,n},+}-z_{\alpha^{k'_{i,n}}_{1,n}\beta_{i,n}\alpha^{l'_{i,n}}_{1,n},-}|
\ge\frac{\sigma}{\mathfrak{D}_n|\tr(\beta_{i,n})|}$,
for all $n$ sufficiently large. 
\end{lem}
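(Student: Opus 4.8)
The plan is to pass from fixed-point gaps to ``twisted traces'' via Lemma~\ref{fixsbound-2}, and then obtain the required lower bound from a pigeonhole dichotomy powered by Proposition~\ref{tracea}. Write $\beta_{i,n}=\bigl(\begin{smallmatrix}a_n&b_n\\c_n&d_n\end{smallmatrix}\bigr)$, $\lambda_n=\lambda_{1,n}$, $m_n=k_{i,n}+l_{i,n}$, and suppress the subscripts $i,n$ on $k',l'$. A direct matrix multiplication gives $\tr(\alpha_{1,n}^{k'}\beta_{i,n}\alpha_{1,n}^{l'})=\lambda_n^{\,m'}a_n+\lambda_n^{-m'}d_n$ with $m'=k'+l'$, and for $(k',l')$ with $|k_{i,n}-k'|+|l_{i,n}-l'|=J\in\{0,1\}$ one has $m'\in\{m_n-1,m_n,m_n+1\}$. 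Because $M^{-1}<T_{\alpha_{1,n}}<M$, the integers $|k_{i,n}|,|l_{i,n}|$ and hence $|m_n|$ are bounded, the powers $|\lambda_n|^{\pm|m_n|}$ are bounded, and $|\lambda_n-\lambda_n^{-1}|$ is bounded below away from $0$. By Lemma~\ref{fixsbound-2} (applied with a slightly larger exponent bound, which only changes the constants), the fixed-point gap of $\alpha_{1,n}^{k'}\beta_{i,n}\alpha_{1,n}^{l'}$ is at least $\sigma_2\,|\lambda_n^{\,m'}a_n+\lambda_n^{-m'}d_n|/|\tr(\beta_{i,n})|$. So it suffices to prove: for all large $n$, either $|\lambda_n^{\,m_n}a_n+\lambda_n^{-m_n}d_n|\ge 1/D_n$, or $|\lambda_n^{\,m_n\pm1}a_n+\lambda_n^{-(m_n\pm1)}d_n|\ge 1/D_n$ for both signs; then one takes $J=0$ in the first case and $J=1$ in the second, with $\sigma=\sigma_2$.

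I would establish this dichotomy by contradiction. If it fails at some large $n$, then among $m_n-1,m_n,m_n+1$ there are two exponents $p,p+1$ with $|\lambda_n^{\,p}a_n+\lambda_n^{-p}d_n|<1/D_n$ and $|\lambda_n^{\,p+1}a_n+\lambda_n^{-(p+1)}d_n|<1/D_n$. Setting $A=\lambda_n^{\,p}a_n$, $E=\lambda_n^{-p}d_n$, $u=A+E$, $v=\lambda_nA+\lambda_n^{-1}E$, one solves $E=(\lambda_nu-v)/(\lambda_n-\lambda_n^{-1})$ and $A=(\lambda_n^{-1}u-v)/(\lambda_n^{-1}-\lambda_n)$; since $|u|,|v|<1/D_n$ and $|\lambda_n-\lambda_n^{-1}|$ is bounded below, $|A|,|E|\le C/D_n$, and multiplying back by the bounded powers $\lambda_n^{\mp p}$ gives $|a_n|,|d_n|\le C'/D_n$, hence $|\tr(\beta_{i,n})|=|a_n+d_n|\le 2C'/D_n$. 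But Proposition~\ref{tracea} gives $|\tr(\beta_{i,n})|>\rho\bigl((|\lambda_n|^{2D_n}+3)/(|\lambda_n|^{2D_n}-1)\bigr)^{1/2D_n}$, and since $|\lambda_n|$ is bounded above and below this right-hand side grows faster than any fixed power of $1/D_n$ as $D_n\to0$ (take logarithms), so it exceeds $2C'/D_n$ for $n$ large --- a contradiction. Thus the dichotomy holds for all large $n$, with $J$ possibly depending on $n$; passing to a subsequence makes $J$ uniform if that is wanted.

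The single point carrying real content is the linear-algebra step turning smallness of two consecutive twisted traces into smallness of $|a_n|$ and $|d_n|$ themselves: this is precisely where the lower bound $T_{\alpha_{1,n}}>M^{-1}$ enters, to keep $|\lambda_n-\lambda_n^{-1}|$ away from $0$, which is why this lemma --- unlike the others in this section --- carries a two-sided bound on $T_{\alpha_{1,n}}$. Everything else (boundedness of $|k_{i,n}|,|l_{i,n}|,|m_n|$; the harmless change of constants when Lemma~\ref{fixsbound-2} is invoked for the shifted exponents $k'_{i,n},l'_{i,n}$; the super-polynomial blow-up of the bound in Proposition~\ref{tracea}) is routine.
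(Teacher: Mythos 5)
The paper itself offers no proof of Lemma \ref{fix-bound-3}: it is quoted from \cite{HS} (and the author notes it is not used later), so there is no in-paper argument to compare yours against; I can only judge your derivation on its own terms, and it is sound. Your route --- reducing the fixed-point gap of $\alpha_{1,n}^{k'}\beta_{i,n}\alpha_{1,n}^{l'}$ to its trace $\lambda_n^{m'}a_n+\lambda_n^{-m'}d_n$ via Lemma \ref{fixsbound-2}, and then showing that two consecutive twisted traces cannot both be $<1/D_n$ because the $2\times 2$ system with determinant $\lambda_n-\lambda_n^{-1}$ (bounded away from $0$ precisely by the lower bound $T_{\alpha_{1,n}}>M^{-1}$) would force $|a_n|,|d_n|\lesssim 1/D_n$ and hence $|\tr(\beta_{i,n})|\lesssim 1/D_n$, contradicting the super-polynomial lower bound of Proposition \ref{tracea} (whose hypothesis $Z_{\beta_{i,n}}>\Delta$ follows from $Z_{<\alpha_{1,n},\beta_{i,n}>}>\Delta$) --- is correct, and you rightly identify that the two-sided bound on $T_{\alpha_{1,n}}$ is what bounds $|k_{i,n}|,|l_{i,n}|$ and keeps $|\lambda_n-\lambda_n^{-1}|$ away from $0$. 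Two small caveats worth flagging: the argument uses the standing assumption of Section 4 that $D_n\to 0$ (needed both for Proposition \ref{tracea} and for the super-polynomial blow-up to beat $C/D_n$), which you should state explicitly; and your dichotomy yields a choice of $J$ that may depend on $n$, so a single $J$ valid for all large $n$, as the statement literally asserts, is only obtained along a subsequence --- consistent with the paper's habitual subsequence conventions, but a genuine (if minor) gap relative to the literal wording.
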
 
The following observation can be seen from the proof of Lemma \ref{fix-trace} \cite{HS}.
For a more rough upper bound of fixed points of $\alpha^{k_{i,n}}_{1,n}\beta_{i,n}\alpha^{l_{i,n}}_{1,n}$ to 
$\zeta_{\alpha^{k_{i,n}}_{1,n}\beta_{i,n}\alpha^{l_{i,n}}_{1,n}},$ and $\eta_{\alpha^{k_{i,n}}_{1,n}\beta_{i,n}\alpha^{l_{i,n}}_{1,n}},$ then we can relax the conditions in Lemma \ref{fix-trace}
and state as follows,
\begin{remc}\label{4B}
Suppose that $|z_{\beta_{i,n},+}-z_{\beta_{i,n},-}|<c$ and $|c_{i,n}\lambda_{1,n}^{l_{i,n}-k_{i,n}}|\to\infty.$ Then
$\{z_{\alpha^{k_{i,n}}_{1,n}\beta_{i,n}\alpha^{l_{i,n}}_{1,n},+},z_{\alpha_{1,n}^{k_{i,n}}\beta_{i,n}\alpha_{1,n}^{l_{i,n}},-}\} \to
\{\zeta_{\alpha^{k_{i,n}}_{1,n}\beta_{i,n}\alpha^{l_{i,n}}_{1,n}},\eta_{\alpha_{1,n}^{k_{i,n}}\beta_{i,n}\alpha_{1,n}^{l_{i,n}}}\}.$
\end{remc}
\section{Sufficient Conditions for $k$-generated}
Given a sequence of Schottky groups with decreasing Hausdorff diemnsions, we will state and prove a set of conditions
that will be sufficient for the given sequence to contain a subsequence which will be classical Schottky groups for small enough Hausdorff dimensions.\par
Denote the unit ball model of hyperbolic $3-$space by $(\mathbb{B}, \dis_B).$
Set $\pi:\mathbb{B}\longrightarrow \mathbb{H}^3$ the stereographic hyperbolic isometry. 
Let $\G_n$ to be a sequences of $k-$generated Schottky groups.
\par
For a loxodromic element $\alpha$ of $\Psl(2,\mathbb{C})$ acting on the unit ball $\mathbb{B}$ model of hyperbolic $3$-space,
denote by $S_{\alpha,r}$ and $S_{\alpha^{-1},r}$ the isometric spheres of
Euclidean radius $r$ of $\alpha.$ We set $\lambda_{\pi_*(\alpha)}$ as the multiplier of $\pi_*(\alpha)$ in 
the upper space model $\mathbb{H}^3.$
For $R>0,$ set $C_R$ as the circle in $\mathbb{C}$ about origin of radius $R.$ We will denote the north pole of $\partial\mathbb{B}$ by ${\bf e}=(0,0,1).$
\begin{prop}\label{proj}\cite{HS}
Let $\alpha$ be a loxodromic element of $\Psl(2,\mathbb{C})$ acting on $\mathbb{B},$
with axis passing through the origin and fixed points on north and south poles. Then
$\pi(S_{\alpha,r}\cap\partial\mathbb{B}),\pi(S_{\alpha^{-1},r}\cap\partial\mathbb{B})$ maps to
$C_{\frac{1}{\lambda_{\pi_*(\alpha)}}},C_{\lambda_{\pi_*(\alpha)}}.$
\end{prop}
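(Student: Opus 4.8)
The plan is to work out explicitly the action of $\alpha$ and its isometric spheres in the ball model, transport everything to $\mathbb{H}^3$ via the stereographic isometry $\pi$, and compute the images of the boundary circles directly. First I would normalize: since $\alpha$ is loxodromic with axis through the origin and fixed points at the poles $\pm{\bf e}$, the isometry $\pi_*(\alpha) = \pi \circ \alpha \circ \pi^{-1}$ fixes $0,\infty$ in $\overline{\mathbb{C}}$, so it acts as $z \mapsto \lambda_{\pi_*(\alpha)}^2 z$ (up to the standard square-root convention) with $|\lambda_{\pi_*(\alpha)}| > 1$. The isometric sphere $S_{\pi_*(\alpha),r}$ of $\pi_*(\alpha)$ in the upper half-space is the hemisphere centered on $\mathbb{C}$ over the circle $C_r$ where the derivative of $\pi_*(\alpha)$ has modulus $1$; a direct computation with $z \mapsto \lambda^2 z$ shows the isometric circle of $\pi_*(\alpha)$ on $\partial\mathbb{H}^3$ is $C_{1/\lambda_{\pi_*(\alpha)}}$ and that of $\pi_*(\alpha)^{-1}$ is $C_{\lambda_{\pi_*(\alpha)}}$.

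Next I would use the fact that $\pi$ is a hyperbolic isometry, hence conjugates isometric spheres of $\alpha$ to isometric spheres of $\pi_*(\alpha)$: the isometric sphere of an isometry $g$ of $\mathbb{B}$ (with the origin as basepoint) is characterized metrically as $\{x : \dis_B(x, g^{-1}(\text{center})) = \dis_B(x,\text{center})\}$-type locus, or more simply as the sphere on which $g$ acts as a Euclidean isometry relative to the appropriate ball metric normalization; since $\pi$ sends the center of $\mathbb{B}$ to a chosen basepoint of $\mathbb{H}^3$ and is conformal on the boundary, it carries $S_{\alpha,r}\cap\partial\mathbb{B}$ to the isometric circle of $\pi_*(\alpha)$ on $\partial\mathbb{H}^3$, and likewise for $\alpha^{-1}$. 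Combining with the previous paragraph, $\pi(S_{\alpha,r}\cap\partial\mathbb{B}) = C_{1/\lambda_{\pi_*(\alpha)}}$ and $\pi(S_{\alpha^{-1},r}\cap\partial\mathbb{B}) = C_{\lambda_{\pi_*(\alpha)}}$. One must also check the labelling matches the statement (which sphere goes to which pole): the isometric sphere $S_{\alpha,r}$ is the one on the source side, mapped by $\alpha$ to $S_{\alpha^{-1},r}$, and under $\pi$ this source/target distinction corresponds exactly to the inner circle $C_{1/\lambda}$ versus the outer circle $C_{\lambda}$.

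The main obstacle I anticipate is bookkeeping rather than conceptual: pinning down the precise relationship between the Euclidean radius $r$ of the isometric sphere in $\mathbb{B}$ (which depends on the normalization of the ball metric and on $\lambda_{\pi_*(\alpha)}$) and the radii $1/\lambda_{\pi_*(\alpha)}$, $\lambda_{\pi_*(\alpha)}$ of the image circles, together with getting the square-root/multiplier conventions consistent with the Notation section (where $\alpha_1 = \bigl(\begin{smallmatrix}\lambda_{1} & 0\\0 & \lambda_{1}^{-1}\end{smallmatrix}\bigr)$ acts as $z\mapsto \lambda_1^2 z$). Since this is Proposition \ref{proj} quoted from \cite{HS}, I would present the computation in the normalized coordinates, verify the two boundary circles come out as claimed, and cite \cite{HS} for the routine verification that $\pi$ intertwines the isometric-sphere structures.
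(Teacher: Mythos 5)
The paper offers no proof of this statement at all---Proposition \ref{proj} is simply quoted from \cite{HS}---so your proposal must stand on its own, and as written it contains a genuine gap. Your central computational claim, that a ``direct computation with $z\mapsto\lambda_{\pi_*(\alpha)}^2z$ shows the isometric circle of $\pi_*(\alpha)$ is $C_{1/\lambda_{\pi_*(\alpha)}}$ and that of $\pi_*(\alpha)^{-1}$ is $C_{\lambda_{\pi_*(\alpha)}}$,'' is false: $\pi_*(\alpha)$ fixes $0$ and $\infty$, its derivative has constant modulus $|\lambda_{\pi_*(\alpha)}|^{\pm 2}\neq 1$, so the locus where the derivative has modulus $1$ is empty and a transformation fixing $\infty$ has no isometric circle in the upper half-space model at all. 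For the same reason the bridge in your second paragraph fails: the isometric sphere is a Euclidean, model-dependent notion (its center is $g^{-1}(\infty)$), not a M\"obius-invariant one, so the isometry $\pi$ does \emph{not} carry isometric spheres of $\alpha$ to isometric spheres of $\pi_*(\alpha)$---in the present situation the latter do not even exist. That non-invariance is exactly what gives the proposition its content: the circles $C_{1/\lambda_{\pi_*(\alpha)}}$, $C_{\lambda_{\pi_*(\alpha)}}$ are not isometric circles of $\pi_*(\alpha)$; they are merely the $\pi$-images of the boundary traces of the isometric spheres computed in the ball model.

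The argument is repaired by the metric characterization you gesture at but never pin down: for a M\"obius transformation $g$ preserving $\mathbb{B}$ with $g(0)\neq 0$, the isometric sphere of $g$ meets $\overline{\mathbb{B}}$ in the hyperbolic perpendicular bisector of the geodesic segment from $0$ to $g^{-1}(0)$ (see \cite{Berdon}). This description \emph{is} transported by $\pi$, because it refers only to the hyperbolic metric and the basepoint. Since $\pi(0)=(0,0,1)$ lies on the vertical axis over $0\in\mathbb{C}$, which is the axis of $\pi_*(\alpha)$, and $\pi_*(\alpha)^{-1}(0,0,1)=(0,0,|\lambda_{\pi_*(\alpha)}|^{-2})$, the image $\pi(S_{\alpha,r}\cap\mathbb{B})$ is the perpendicular bisector of two points on the vertical axis at heights $1$ and $|\lambda_{\pi_*(\alpha)}|^{-2}$, i.e.\ the Euclidean hemisphere centered at the origin of radius $|\lambda_{\pi_*(\alpha)}|^{-1}$, whose boundary circle is $C_{1/\lambda_{\pi_*(\alpha)}}$ (with $\lambda_{\pi_*(\alpha)}$ read as its modulus, as the definition of $C_R$ requires). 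The same computation for $\alpha^{-1}$ gives the hemisphere of radius $|\lambda_{\pi_*(\alpha)}|$ and the circle $C_{\lambda_{\pi_*(\alpha)}}$, with precisely the pairing asserted. If you replace your first paragraph's isometric-circle computation and the ``$\pi$ conjugates isometric spheres'' step by this bisector argument, the outline becomes a correct proof; as it stands, both of those steps are unsound.
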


\begin{lem}\label{classical}
Let $<\alpha_{1,n},\beta_{2,n},...,\beta_{k,n}>$ be generators for Schottky groups $\G_n$ in the upper-half space model $\mathbb{H}^3$ 
with $|\tr(\beta_{i,n})|\to\infty$ and 
$\alpha_n=\bigl(\begin{smallmatrix} \lambda_{1,n} & 0\\0 & \lambda^{-1}_{1,n}\end{smallmatrix}\bigr)$, $|\lambda_{1,n}|>1$. Suppose one of the following set of conditions
holds:\par
there exists $\Lambda>1$, such that for large $n,$ we have $|\lambda_{1,n}|<\Lambda$ and, 
\begin{itemize}
\item
$|\lambda_{1,n}|^{-1}<|z_{\beta_{i,n},l}|\le|z_{\beta_{i,n},u}|<|\lambda_{1,n}|,$ and
\item 
\[\liminf_n\left\{\frac{1}{(|z_{\beta_{i,n},u}|-|\lambda_{1,n}|)|\tr(\beta_{i,n})|},\frac{1}{(|z_{\beta_{i,n},l}|-|\lambda_{1,n}|^{-1})|\tr(\beta_{i,n})|}
\right\}=0,\]
\item
\[|z_{\beta_{{i,n},\pm}}-z_{\beta_{{i',n},\pm}}||\tr(\beta_{i,n})|\to\infty;\quad |\tr(\beta_{i,n})|\le|\tr(\beta_{i',n})|,\quad i\not=i'.\]
\end{itemize}
or there exists $\kappa>0$ and for large $n,$ we have $|\lambda_{i,n}|>\kappa$ and,
\begin{itemize}
\item
$\kappa^{-1}<|z_{\beta_{1,n},l}|\le|z_{\beta_{1,n},u}|<\kappa,$ and
\item 
\[\liminf_n\left\{\frac{1}{(|z_{\beta_{i,n},u}|-\kappa)|\tr(\beta_{i,n})|},\frac{1}{(|z_{\beta_{i,n},l}|-\kappa^{-1})|\tr(\beta_{1,n})|}
\right\}=0,\]
\item
\[|z_{\beta_{{i,n},\pm}}-z_{\beta_{{i',n},\pm}}||\tr(\beta_{i,n})|\to\infty;\quad |\tr(\beta_{i,n})|\le|\tr(\beta_{i',n})|,\quad i\not=i'.\]

\end{itemize}
then there exists a subsequence such that for $m$ large, $\pi^{-1}<\alpha_{1,n_m},\beta_{2,n_m},...,\beta_{k,n_m}>\pi$ 
are classical generators for $\G_{n_m}$ in the unit ball model $\mathbb{B}$.
\end{lem}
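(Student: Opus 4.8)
The plan is to produce, after passing to a subsequence, an explicit \emph{round} Schottky marking of $\G_n$ on $\overline{\mathbb C}=\partial\mathbb H^3$ and then transport it to $\mathbb B$ by the M\"obius map $\pi$ restricted to $\partial\mathbb B$. For the diagonal generator $\alpha_{1,n}(z)=\lambda_{1,n}^2 z$ the circles $C_{|\lambda_{1,n}|^{-1}}$ and $C_{|\lambda_{1,n}|}$, bounding the disks $\{|z|<|\lambda_{1,n}|^{-1}\}$ and $\{|z|>|\lambda_{1,n}|\}$, form a Schottky pair: $\alpha_{1,n}$ carries $C_{|\lambda_{1,n}|^{-1}}$ onto $C_{|\lambda_{1,n}|}$, and $\alpha_{1,n}(\{|z|<|\lambda_{1,n}|^{-1}\})=\{|z|<|\lambda_{1,n}|\}$ is disjoint from $\{|z|>|\lambda_{1,n}|\}$; by Proposition \ref{proj} these two circles are exactly the boundary traces on $\partial\mathbb B$ of the isometric spheres of $\pi^{-1}\alpha_{1,n}\pi$. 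Since the fixed points of $\beta_{i,n}$ are finite we have $c_{i,n}\ne 0$, and the candidate pair for $\beta_{i,n}$ is its pair of isometric circles, of common radius $\mathfrak R_{\beta_{i,n}}=|c_{i,n}|^{-1}$, centered at $\eta_{\beta_{i,n}}$ and $\zeta_{\beta_{i,n}}$, with $\beta_{i,n}$ pairing them. It therefore suffices to show that along a suitable subsequence and for $n$ large the $2k$ closed disks bounded by these $2k$ round circles are pairwise disjoint; such a configuration with these pairings is by definition a classical marking, and since $\pi|_{\partial\mathbb B}$ is M\"obius it carries round circles to round circles, so $\pi^{-1}\langle\alpha_{1,n},\beta_{2,n},\dots,\beta_{k,n}\rangle\pi$ is a classical Schottky group in $\mathbb B$.

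The first and decisive step is to translate the hypotheses, which speak of the fixed points $z_{\beta_{i,n},\pm}$ and $z_{\beta_{i,n},l},z_{\beta_{i,n},u}$, into control of the isometric circles. The two fixed points of $\beta_{i,n}$ and the two centers $\eta_{\beta_{i,n}},\zeta_{\beta_{i,n}}$ have the common midpoint $(a_{i,n}-d_{i,n})/(2c_{i,n})$; indeed $z_{\beta_{i,n},\pm}-(a_{i,n}-d_{i,n})/(2c_{i,n})=\pm\sqrt{\tr^2(\beta_{i,n})-4}\,/(2c_{i,n})$, while $\zeta_{\beta_{i,n}},\eta_{\beta_{i,n}}$ are obtained by replacing $\sqrt{\tr^2(\beta_{i,n})-4}$ with $\tr(\beta_{i,n})$. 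Since $|\tr(\beta_{i,n})|\to\infty$ this yields $|z_{\beta_{i,n},+}-z_{\beta_{i,n},-}|\asymp|\tr(\beta_{i,n})|\,\mathfrak R_{\beta_{i,n}}$ and $\dis(z_{\beta_{i,n},\pm},\{\eta_{\beta_{i,n}},\zeta_{\beta_{i,n}}\})=O(\mathfrak R_{\beta_{i,n}}/|\tr(\beta_{i,n})|)$, so the closed disk bounded by each isometric circle of $\beta_{i,n}$ lies in the disk of radius $2\mathfrak R_{\beta_{i,n}}$ about one of the fixed points of $\beta_{i,n}$. In either set of hypotheses the fixed points of $\beta_{i,n}$ are confined to a bounded annulus ($|z|<\Lambda$, resp. $|z|<\kappa$), hence $|z_{\beta_{i,n},+}-z_{\beta_{i,n},-}|$ is bounded, and therefore $\mathfrak R_{\beta_{i,n}}\le c\,|\tr(\beta_{i,n})|^{-1}\to 0$; this is the estimate that drives everything.

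With this dictionary in hand, pass to a common subsequence (over the finitely many indices $i$) along which the $\liminf$ hypothesis is realized, so that $(|\lambda_{1,n}|-|z_{\beta_{i,n},u}|)|\tr(\beta_{i,n})|\to\infty$ and $(|z_{\beta_{i,n},l}|-|\lambda_{1,n}|^{-1})|\tr(\beta_{i,n})|\to\infty$ in the first case, and the analogous statements with $\kappa,\kappa^{-1}$ in the second. Then pairwise disjointness of the $2k$ disks follows: the two $\alpha_{1,n}$-disks are disjoint as $|\lambda_{1,n}|>1$; the two disks of each $\beta_{i,n}$ are disjoint since their centers are at distance $|z_{\beta_{i,n},+}-z_{\beta_{i,n},-}|\asymp|\tr(\beta_{i,n})|\,\mathfrak R_{\beta_{i,n}}$, which far exceeds $2\mathfrak R_{\beta_{i,n}}$; each $\beta_{i,n}$-disk is disjoint from both $\alpha_{1,n}$-disks because it lies within $2\mathfrak R_{\beta_{i,n}}\le 2c\,|\tr(\beta_{i,n})|^{-1}$ of a point $z$ with $(|\lambda_{1,n}|-|z|)|\tr(\beta_{i,n})|\to\infty$ and $(|z|-|\lambda_{1,n}|^{-1})|\tr(\beta_{i,n})|\to\infty$ (in the second case one also uses $|\lambda_{1,n}|>\kappa$, so $|\lambda_{1,n}|^{-1}<\kappa^{-1}$, to pass from the bounds involving $\kappa$ to those involving $|\lambda_{1,n}|$); and for $i\ne i'$ with $|\tr(\beta_{i,n})|\le|\tr(\beta_{i',n})|$ the disks of $\beta_{i,n}$ and $\beta_{i',n}$ are disjoint because the relevant center distances are at least $|z_{\beta_{i,n},\pm}-z_{\beta_{i',n},\pm}|-2\mathfrak R_{\beta_{i,n}}-2\mathfrak R_{\beta_{i',n}}$, whose product with $|\tr(\beta_{i,n})|$ is at least $|z_{\beta_{i,n},\pm}-z_{\beta_{i',n},\pm}|\,|\tr(\beta_{i,n})|-O(1)\to\infty$ by the third bullet. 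Hence for all large $n$ in the subsequence the $2k$ round circles bound disjoint closed disks, giving a classical marking of $\G_n$ in $\overline{\mathbb C}$ and, via $\pi^{-1}$, of $\pi^{-1}\langle\alpha_{1,n},\beta_{2,n},\dots,\beta_{k,n}\rangle\pi$ in $\mathbb B$.

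The main obstacle is the translation carried out in the second step: one must verify that each isometric circle of $\beta_{i,n}$ genuinely hugs a fixed point at the scale $\mathfrak R_{\beta_{i,n}}$, and above all that $\mathfrak R_{\beta_{i,n}}\asymp|z_{\beta_{i,n},+}-z_{\beta_{i,n},-}|/|\tr(\beta_{i,n})|$, since it is only through this that the bounded-annulus hypotheses on the fixed points produce the key bound $\mathfrak R_{\beta_{i,n}}\lesssim|\tr(\beta_{i,n})|^{-1}$ on which all of the disjointness estimates rest. Everything afterward is a routine application of the triangle inequality together with the subsequence supplied by the $\liminf$ hypothesis.
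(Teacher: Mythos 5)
Your proposal is correct, and it reaches the conclusion by a noticeably different technical route than the paper, even though the underlying strategy (small round circles hugging the fixed points of each $\beta_{i,n}$, separated from the pair $C_{|\lambda_{1,n}|^{-1}},C_{|\lambda_{1,n}|}$ and from each other) is the same. The paper conjugates to the ball model first and bounds the Euclidean radii $\rho_{j,i}$ of the isometric spheres of $\pi^{-1}\beta_{j,n_i}\pi$ through hyperbolic geometry, via $\rho_{j,i}^{-1}=\cosh\dis(o,\mathcal L_{\pi^{-1}\beta_{j,n_i}\pi})\sinh(\tfrac12 T_{\pi^{-1}\beta_{j,n_i}\pi})$ together with the chordal distortion formula for $\pi^{-1}$, and this forces a case split: when $|z_{\beta_{j,n_i},u}-z_{\beta_{j,n_i},l}|$ is bounded below the crude bound $\rho_{j,i}\lesssim|\tr(\beta_{j,n_i})|^{-1}$ suffices, while when the fixed points collapse the paper must separately estimate $\cosh\dis(J,\mathcal L_{\beta_{j,n_i}})$ to recover the factor $|z_{\beta_{j,n_i},u}-z_{\beta_{j,n_i},l}|$. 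You instead stay in $\overline{\mathbb C}$, use the planar isometric circles of $\beta_{i,n}$ with the exact matrix identities $\mathfrak R_{\beta_{i,n}}=|c_{i,n}|^{-1}$, $|z_{\beta_{i,n},+}-z_{\beta_{i,n},-}|=|\sqrt{\tr^2(\beta_{i,n})-4}|\,|c_{i,n}|^{-1}$, and the $O(\mathfrak R_{\beta_{i,n}}/|\tr(\beta_{i,n})|)$ displacement of fixed points from the centers $\zeta_{\beta_{i,n}},\eta_{\beta_{i,n}}$; this yields the sharp radius bound uniformly, so the paper's dichotomy disappears, and the transfer to $\mathbb B$ is deferred to the very end using only that $\pi^{-1}$ restricted to the boundary is a Möbius map carrying round circles to round circles (which is legitimate, since classicality is a conjugation-invariant statement about round Schottky disks). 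What your route buys is economy and an explicit Schottky marking (Ford pairing of isometric circles, plus the annulus pair for the diagonal $\alpha_{1,n}$); what the paper's route buys is the intermediate hyperbolic estimates that it reuses for Remark \ref{classical-rem} and Remark \ref{classical-rem-2}, which your argument would have to redo if needed later. The only gloss in your write-up is the extraction of a single subsequence, common to all $i$, along which the $\liminf$ conditions become genuine limits; this is exactly the same gloss the paper makes, so it is not a gap relative to the intended reading of the hypotheses.
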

\begin{proof}
Suppose that there exists some subsequence $<\alpha_{1,n_i},\beta_{2,n_i},...,\beta_{k,n_i}>$ which satisfies the first given set of above conditions.
Let us first assume for large $i,$ $\min_{2\le j\le k}\{|z_{\beta_{{j,n_i},u}}-z_{\beta_{{j,n_i},l}}|\}>\delta>0.$  \par
Let $r_i,\rho_{j,i}$  be the Euclidean radii of isometric spheres of generator, $\pi^{-1}\alpha_{1,n_i}\pi$ and 
$\pi^{-1}\beta_{j,n_i}\pi$ respectively for $2\le j\le k.$ 
Since that $4\cosh(T_{\pi^{-1}\beta_{j,n_i}\pi})\ge|\tr^2(\pi^{-1}\beta_{j,n_i}\pi)|$ implies
that there exists some $c'>0$ such that $e^{T_{\pi^{-1}\beta_{j,n_i}\pi}}\ge c'|\tr^2(\pi^{-1}\beta_{j,n_i}\pi)|.$
By $\rho^{-1}_{j,i}=\cosh\dis(o,\mathcal{L}_{\pi^{-1}\beta_{j,n_i}\pi})\sinh(\frac{1}{2}T_{\pi^{-1}\beta_{j,n_i}\pi})$ 
(\cite{Berdon} p$175$), 
we have that $\rho^{-1}_{j,i}\ge\sinh(\frac{1}{2}T_{\pi^{-1}\beta_{j,n_i}\pi})$ so for large $i$ there exists some $c>0$ with that
$\rho^{-1}_{j,i}\ge ce^{\frac{1}{2}T_{\pi^{-1}\beta_{j,n_i}\pi}}.$
Therefore
there exists some $\delta_1>0$ such that $\rho_{j,i}\le\delta_1|\tr(\beta_{j,n_i})|^{-1}$ for large $i.$
Since for $z,w\in\mathbb{C}$,
\[|\pi^{-1}(z)'|=\frac{2}{|z+{\bf e}|^2}\quad\quad\text{and},\]
\[|\pi^{-1}(z)-\pi^{-1}(w)|=|\pi^{-1}(z)'|^{1/2}|\pi^{-1}(w)'|^{1/2}|z-w|.\]
This implies for large $i,$ and $x_{j,i}\in C_{|z_{\beta_{j,n_i},u}|}$ and $y_{j,i}\in C_{|\lambda_{1,n_i}|},$
\begin{eqnarray*}
\frac{\rho_{j,i}}{|\pi^{-1}x_{j,i}-\pi^{-1}y_{j,i}|}&\le& \frac{2\delta_1|x_{j,i}+{\bf e}||y_{j,i}+{\bf e}|}{|\tr(\beta_{j,n_i})||x_{j,i}-y_{j,i}|}.
\end{eqnarray*}
Since $|\lambda_{1,n_i}|<\Lambda,$ there exists $\delta_2>0,$ such that $|x_{j,i}+{\bf e}||y_{j,i}+{\bf e}|<\delta_2$, and
\begin{eqnarray*}
\lim_i\frac{\rho_{j,i}}{|\pi^{-1}x_{j,i}-\pi^{-1}y_{j,i}|}
&\le&\lim_i\frac{2\delta_1\delta_2}{|\tr(\beta_{j,n_i})||x_{j,i}-y_{j,i}|}=0\quad\mbox{for}\quad 2\le j\le k.
\end{eqnarray*}
Similarly there exists $\delta_3>0,$ such that for $w_{j,i}\in C_{|z_{\beta_{j,n_i},l}|}$ and $z_{j,i}\in C_{|\lambda_{1,n_i}|^{-1}},$
\begin{eqnarray*}
\lim_i\frac{\rho_{j,i}}{|\pi^{-1}w_{j,i}-\pi^{-1}z_{j,i}|}
&\le&\lim_i\frac{2\delta_1\delta_3}{|\tr(\beta_{j,n_i})||w_{j,i}-z_{j,i}|}=0\quad\mbox{for}\quad 2\le j\le k.
\end{eqnarray*}
Also there exists $\delta_4>0,$ such that for $j\not=i$ and $f_{j,i}\in C_{|z_{\beta_{j,n_i},u}|}\cup C_{|z_{\beta_{j,n_i},l}|}$ and
$g_{j',i}\in C_{|z_{\beta_{j',n_i},u}|}\cup C_{|z_{\beta_{j',n_i},l}|}$ and $|\tr(\beta_{j,i})|\le|\tr(\beta_{j',i})|,$ 
\begin{eqnarray*}
\lim_i\frac{\rho_{j,i}+\rho_{j',i}}{|\pi^{-1}f_{j,i}-\pi^{-1}g_{j',i}|} &\le&\frac{2\rho_{j,i}}{|\pi^{-1}f_{j,i}-\pi^{-1}g_{j',i}|}\\
&\le&\lim_i\frac{2\delta_1\delta_4}{|\tr(\beta_{j,n_i})||f_{j,i}-g_{j',i}|}=0\quad\mbox{for}\quad 2\le j\le k.
\end{eqnarray*}

Therefore it follows for large $i$ and Proposition \ref{proj} we have the isometric spheres 
$S_{\pi^{-1}\alpha_{1,n_i}\pi,r_i},S_{\pi^{-1}\alpha^{-1}_{1,n_i}\pi,r_i},$
$S_{\pi^{-1}\beta_{j,n_i}\pi,\rho_i},S_{\pi^{-1}\beta^{-1}_{j,n_i}\pi,\rho_i}$ and 
$S_{\pi^{-1}\beta_{j,n_i}\pi,\rho_i},S_{\pi^{-1}\beta^{-1}_{j,n_i}\pi,\rho_i},$ $S_{\pi^{-1}\beta_{j',n_i}\pi,\rho_i},S_{\pi^{-1}\beta^{-1}_{j',n_i}\pi,\rho_i}$ 
are disjoint for all $2\le j<j'\le k.$
\begin{rem}\label{classical-rem-2}
We see from above proof that without the assumption of $|\lambda_{j,n_i}|<\Lambda$ then we can't put bounds on $|x_{j,i}+{\bf e}||y_{j,i}+{\bf e}|.$ 
But since $|x_{j,i}+{\bf e}||y_{j,i}+{\bf e}|\le (|z_{\beta_{j,n_i},u}|+1)(|\lambda_{1,n_i}|+1)$ for $x_{j,i}\in C_{|z_{\beta_{n_i},u}|}, y_{j,i}\in C_{|\lambda_{1,n_i}|}$ and
$|x_{j,i}+{\bf e}||y_{j,i}+{\bf e}|\le (|z_{\beta_{j,n_i},l}|+1)(|\lambda_{1,n_i}|^{-1}+1)$ for $x_{j,i}\in C_{|z_{\beta_{j,n_i},l}|}, y_{j,i}\in C_{|\lambda_{1,n_i}|^{-1}}.$ 
Therefore we can restate the condition as,
\begin{itemize}
\item
$|\lambda_{1,n}|^{-1}<|z_{\beta_{j,n},l}|\le|z_{\beta_{j,n},u}|<|\lambda_{1,n}|,$ and
\item 
\[\liminf_n\left\{\frac{(|z_{\beta_{j,n},u}|+1)(|\lambda_{1,n}|+1)}{(|z_{\beta_{j,n},u}|-|\lambda_{1,n}|)|\tr(\beta_{j,n})|},
\frac{(|z_{\beta_{j,n},l}|+1)(|\lambda_{1,n}|^{-1}+1)}{(|z_{\beta_{j,n},l}|-|\lambda_{1,n}|^{-1})|\tr(\beta_{j,n})|}
\right\}=0,\]
\item
\[|z_{\beta_{{j,n},\pm}}-z_{\beta_{{j',n},\pm}}||\tr(\beta_{j,n})|\to\infty;\quad |\tr(\beta_{j,n})|\le|\tr(\beta_{j',n})|,\quad j\not=j'.\]
\end{itemize}
\end{rem}
Next assume for some $j$ we have that $|z_{\beta_{j,n_i},u}-z_{\beta_{j,n_i},l}|\to 0.$ 
Then with this assumption we can do a much sharper estimate of the lower bounds on $\cosh\dis(J,\mathcal{L}_{\beta_{j,n_i}}),$ which is the distance between 
the point $J$ on the vertical $J$-axis and the translation axis of $\beta_{j,n_i}$ in the space $\mathbb{H}^3.$ Eventhough a much weaker lower bounds will be
sufficient for our case.\par
For any given two points $h_1=(z_1,\theta_1),h_2=(z_2,\theta_2)\in\mathbb{H}^3$ recall that the hyperbolic distance is provided by equation,
\[\cosh\dis(h_1,h_2)=\frac{|z_1-z_2|^2+|\theta_1-\theta_2|^2}{2\theta_1\theta_2}+1.\]
By $|z_{\beta_{j,n_i},u}-z_{\beta_{j,n_i},l}|\to 0,$ also 
$\frac{1}{\Lambda}<|z_{\beta_{j,n_i},l}|\le|z_{\beta_{j,n_i},u}|<\Lambda$ we can estimate $\cosh\dis(J,\mathcal{L}_{\beta_{j,n_i}})$ 
by using the above formula for $(z_1,\theta_1)\in\mathcal{L}_{\alpha_{1,n_i}}$ and $(z_2,\theta_2)\in\mathcal{L}_{\beta_{j,n_i}}.$
Since for large $i$ we have, $|z_1-z_2|\ge|z_{\beta_{j,n_i},l}|,$
$|\theta_1-\theta_2|\ge||z_{\beta_{j,n_i},l}|-\frac{1}{2}(|z_{\beta_{j,n_i},u}-z_{\beta_{j,n_i},l}|)|,$ 
$2\theta_1\theta_2\le |z_{\beta_{j,n_i},u}||z_{\beta_{j,n_i},u}-z_{\beta_{j,n_i},l}|.$
Hence for large $i$ we have,
\[\cosh\dis(J,\mathcal{L}_{\beta_{j,n_i}})\ge\frac{|z_{\beta_{j,n_i},l}|^2+(|z_{\beta_{j,n_i},l}|-\frac{1}{2}(|z_{\beta_{j,n_i},u}-z_{\beta_{j,n_i},l}|))^2}
{|z_{\beta_{j,n_i},u}||z_{\beta_{j,n_i},u}-z_{\beta_{j,n_i},l}|}+1.\]
Since $|\Lambda|^{-1}<|z_{\beta_{j,n_i},l}|\le|z_{\beta_{j,n_i},u}|<\Lambda$ and $|z_{\beta_{j,n_i},u}-z_{\beta_{j,n_i},l}|\to 0,$  we have for large $i$ 
there exits $\sigma>0$
such that 
\[\cosh\dis(J,\mathcal{L}_{\beta_{j,n_i}})\ge\frac{\sigma}{|z_{\beta_{j,n_i},u}-z_{\beta_{j,n_i},l}|}.\]
Also by $\Lambda^{-1}<|z_{\beta_{j,n_i},l}|\le|z_{\beta_{j,n_i},u}|<\Lambda$ and $|\lambda_{1,n_i}|<|\Lambda|,$ there exists $\sigma'>0$ such that
$|\pi^{-1}(z_{\beta_{j,n_i},u})'||\pi^{-1}(z_{\beta_{j,n_i},l})'|>\sigma'.$
Since,
\[ |\pi^{-1}z_{\beta_{j,n_i},u}-\pi^{-1}z_{\beta_{j,n_i},l}|=|\pi^{-1}(z_{\beta_{j,n_i},u})'||\pi^{-1}(z_{\beta_{j,n_i},l})'|
|z_{\beta_{j,n_i},u}-z_{\beta_{j,n_i},l}|,\] we have
\[|\pi^{-1}z_{\beta_{j,n_i},u}-\pi^{-1}z_{\beta_{j,n_i},l}|\ge\sigma'|z_{\beta_{j,n_i},u}-z_{\beta_{j,n_i},l}|,\] 
From $\rho^{-1}_{j,i}=\cosh\dis(o,\mathcal{L}_{\pi^{-1}\beta_{j,n_i}\pi})\sinh(\frac{1}{2}T_{\pi^{-1}\beta_{j,n_i}\pi})$ and
the above estimates, implies that for $i$ large, there exists
$\delta_5>0$ such that $\rho_{j,i}\le\delta_5|\pi^{-1}z_{\beta_{j,n_i},u}-\pi^{-1}z_{\beta_{j,n_i},l}||\tr(\beta_{j,n_i})|^{-1}.$
Hence there exists $\delta_6>0$ such that for $x_{j,i}\in C_{|z_{\beta_{j,n_i},u}|}, y_{j,i}$ $\in C_{|\lambda_{1,n_i}|},$
\begin{eqnarray*}
\lim_i\frac{\rho_{j,i}}{|\pi^{-1}x_{j,i}-\pi^{-1}y_{j,i}|}
&\le&\lim_i\frac{\delta_6|\pi^{-1}z_{\beta_{j,n_i},u}-\pi^{-1}z_{\beta_{j,n_i},l}|}{|\tr(\beta_{j,n_i})||x_{j,i}-y_{j,i}|}=0.
\end{eqnarray*}
Similarly there exists $\delta_7>0,$ such that for $w_{j,i}\in C_{|z_{\beta_{j,n_i},l}|}$ and $z_{j,i}\in C_{|\lambda_{1,n_i}|^{-1}},$
\begin{eqnarray*}
\lim_i\frac{\rho_{j,i}}{|\pi^{-1}w_{j,i}-\pi^{-1}z_{j,i}|}
&\le&\lim_i\frac{\delta_7|\pi^{-1}z_{\beta_{j,n_i},u}-\pi^{-1}z_{\beta_{j,n_i},l}|}{|\tr(\beta_{j,n_i})||w_{j,i}-z_{j,i}|}=0.
\end{eqnarray*}
As before there exists $\delta_8>0,$ such that for $j\not=i$ and $f_{j,i}\in C_{|z_{\beta_{j,n_i},u}|}\cup C_{|z_{\beta_{j,n_i},l}|}$ and
$g_{j',i}\in C_{|z_{\beta_{j',n_i},u}|}\cup C_{|z_{\beta_{j',n_i},l}|}$ and $|\tr(\beta_{j,i})|\le|\tr(\beta_{j',i})|,$ 
\begin{eqnarray*}
\lim_i\frac{\rho_{j,i}+\rho_{j',i}}{|\pi^{-1}f_{j,i}-\pi^{-1}g_{j',i}|} &\le&\frac{2\rho_{j,i}}{|\pi^{-1}f_{j,i}-\pi^{-1}g_{j',i}|}\\
&\le&\lim_i\frac{2\delta_1\delta_4}{|\tr(\beta_{j,n_i})||f_{j,i}-g_{j',i}|}=0\quad\mbox{for}\quad 2\le j\le k.
\end{eqnarray*}
It follows from above calculations and Proposition \ref{proj} one have that for large $i,$ $S_{\pi^{-1}\alpha_{1,n_i}\pi,r_{1,i}},$ $S_{\pi^{-1}\alpha^{-1}_{1,n_i}\pi,r_{1,i}}$
are disjointed from $S_{\pi^{-1}\beta_{j,n_i}\pi,\rho_{j,i}},$ $S_{\pi^{-1}\beta^{-1}_{j,n_i}\pi,\rho_{j,i}}$ also since $|\tr(\beta_{j,n_i})|\to\infty$
implies $S_{\pi^{-1}\beta_{j,n_i}\pi,\rho_{j,i}}$ and $S_{\pi^{-1}\beta^{-1}_{j,n_i}\pi,\rho_{j,i}}$ are disjointed for $i$ is large enough,
and additionally we have $S_{\pi^{-1}\beta_{j,n_i}\pi,\rho_{j,i}}\cup S_{\pi^{-1}\beta^{-1}_{j,n_i}\pi,\rho_{j,i}}$ and
$S_{\pi^{-1}\beta_{j',n_i}\pi,\rho_{j,i}}\cup S_{\pi^{-1}\beta^{-1}_{j',n_i}\pi,\rho_{j,i}}$ are also disjointed for $2\le j< j'\le k.$ Therefore
we get first part of our lemma.\par
For our second part of the lemma it is easily seen it can be proved in the same way as first part.
\end{proof}
\begin{rem}\label{classical-rem}
During the course of our proof we realize that when $|\lambda_{1,n}|\to 1,$ then we can certainly relax our first set of conditions for $2\le j\le k$ in the above lemma:
\begin{itemize}
\item
$|\lambda_{1,n}|^{-1}<|z_{\beta_{j,n},l}|\le|z_{\beta_{j,n},u}|<|\lambda_{1,n}|,$ and
\item 
\[\liminf_n\left\{\frac{|z_{\beta_{j,n},l}-z_{\beta_{j,n},u}|}{(|z_{\beta_{j,n},u}|-|\lambda_{1,n}|)|\tr(\beta_{j,n})|},\frac{|z_{\beta_{j,n},l}-z_{\beta_{j,n},u}|}{(|z_{\beta_{j,n},l}|-|\lambda_{1,n}|^{-1})|\tr(\beta_{j,n})|}
\right\}=0.\]
\end{itemize}
\end{rem}
Next we consider when $|\tr(\beta_{j,n})|<C$ for all $n.$ In this case we can't use the isometric circles  but instead we can employ the following Proposition
to get disjoint circles for $\beta_{i,n}.$
\begin{prop}\label{non-iso}
For any given loxodromic transformation $\gamma$ with fixed points $\not=0,\infty$ and
mutiplier $\lambda^2_\gamma$
we can find disjoint circles $\mathcal{S}_{o,r},\mathcal{S}_{o',r'}$ of center $o$ radius $r$ and center $o'$ radius $r'$ respectively such that, 
\[\gamma(\text{interior}(\mathcal{S}_{o,r}))\cap\text{interior}(\mathcal{S}_{o',r'})=\emptyset,\quad\text{and}\quad
r+r'=|z_{\gamma,+}-z_{\gamma,-}|\frac{2|\lambda_\gamma|}{|\lambda_\gamma|^2-1}.\]
Note that since $|\lambda_\g|>1,$ so by this equality for $r+r'$ we have a upper bound as, $r+r'<|z_{\gamma,-}-z_{\gamma,+}|\frac{|\lambda_\g|+1}{|\lambda_\g|-1}.$
\end{prop}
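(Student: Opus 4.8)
The plan is to reduce $\gamma$ to its linear normal form and then carry out an elementary Apollonius--circle computation. Conjugate $\gamma$ by the Möbius transformation $g\in\Psl(2,\mathbb{C})$ that sends the repelling fixed point of $\gamma$ to $0$ and the attracting one to $\infty$ (one of $z_{\gamma,-},z_{\gamma,+}$; since the asserted identity only involves $|z_{\gamma,+}-z_{\gamma,-}|$, the labelling is immaterial). Then $g\gamma g^{-1}$ is the linear map $w\mapsto\lambda_\gamma^{2}w$ with $|\lambda_\gamma|>1$, which carries the round disk $\{|w|<|\lambda_\gamma|^{-1}\}$ onto $\{|w|<|\lambda_\gamma|\}$; the latter is disjoint from the disk $\{|w|>|\lambda_\gamma|\}\cup\{\infty\}$ about the attracting fixed point. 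Pulling the two concentric circles back by $g^{-1}$, I set $\mathcal{S}_{o,r}=g^{-1}(\{|w|=|\lambda_\gamma|^{-1}\})$ and $\mathcal{S}_{o',r'}=g^{-1}(\{|w|=|\lambda_\gamma|\})$. These are genuine Euclidean circles, not lines, because the pole of $g^{-1}$ lies over $w=1$, which has modulus $1$ and hence lies on neither circle of radius $|\lambda_\gamma|^{\pm1}$; and they are disjoint because the two source circles are.

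For the radii, the key observation is that $g^{-1}(\{|w|=s\})$ is precisely the Apollonius circle $\{\,z:\ |z-z_{\gamma,-}|=s\,|z-z_{\gamma,+}|\,\}$, whose Euclidean radius equals $\frac{s\,|z_{\gamma,+}-z_{\gamma,-}|}{|1-s^{2}|}$. Substituting $s=|\lambda_\gamma|^{-1}$ and $s=|\lambda_\gamma|$ yields, in both cases, $\frac{|\lambda_\gamma|}{|\lambda_\gamma|^{2}-1}\,|z_{\gamma,+}-z_{\gamma,-}|$, so that $r+r'=\frac{2|\lambda_\gamma|}{|\lambda_\gamma|^{2}-1}\,|z_{\gamma,+}-z_{\gamma,-}|$, exactly as asserted; the stated upper bound then follows from $2|\lambda_\gamma|<(|\lambda_\gamma|+1)^{2}$.

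Finally one must identify ``interior'' with the bounded component in each case and verify the pairing. Because $g(\infty)=1$ has modulus strictly between $|\lambda_\gamma|^{-1}$ and $|\lambda_\gamma|$, the point $\infty$ lies in $g^{-1}(\{|w|>|\lambda_\gamma|^{-1}\})$ and in $g^{-1}(\{|w|<|\lambda_\gamma|\})$; hence the bounded component of $\overline{\mathbb{C}}\setminus\mathcal{S}_{o,r}$ is $g^{-1}(\{|w|<|\lambda_\gamma|^{-1}\})$, and the bounded component of $\overline{\mathbb{C}}\setminus\mathcal{S}_{o',r'}$ is $g^{-1}(\{|w|>|\lambda_\gamma|\})$. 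Writing $\gamma=g^{-1}(g\gamma g^{-1})g$ and applying it to the first of these gives $g^{-1}(\{|w|<|\lambda_\gamma|\})$, which is disjoint from $g^{-1}(\{|w|>|\lambda_\gamma|\})$; this says precisely that $\gamma(\text{interior}(\mathcal{S}_{o,r}))\cap\text{interior}(\mathcal{S}_{o',r'})=\emptyset$. The only step that requires genuine care — as opposed to routine bookkeeping — is this last tracking of the point at infinity through $g$, so that the bounded disk is correctly named in each case; everything else is the linear normal form together with the standard Apollonius identity.
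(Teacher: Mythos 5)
Your proof is correct and follows essentially the same route as the paper: conjugate $\gamma$ to the dilation $w\mapsto\lambda_\gamma^{2}w$, take the concentric circles $|w|=|\lambda_\gamma|^{\mp1}$, and transport them back to circles about the fixed points with total radius $\frac{2|\lambda_\gamma|}{|\lambda_\gamma|^{2}-1}|z_{\gamma,+}-z_{\gamma,-}|$. The only differences are presentational: you compute the transported radii in one step via the Apollonius identity $\left|z-z_{\gamma,-}\right|=s\left|z-z_{\gamma,+}\right|$ with radius $\frac{s|z_{\gamma,+}-z_{\gamma,-}|}{|1-s^{2}|}$, where the paper instead composes explicit elementary Möbius maps and quotes the circle-image formula of \cite{MCD}, and you spell out the interior-tracking of $\infty$ that the paper leaves implicit.
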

\begin{proof}
We first conjugates $\gamma$ into
Mobius transformation $\gamma'$ have fixed points $\{0,\infty\}.$  First we look at circles $\mathcal{S}_{0,|\lambda_\gamma|^{-1}},\mathcal{S}_{0,|\lambda_\gamma|}.$
The transformation $\phi(x)=\frac{x-1}{x+1}$ maps the fixed points of $\gamma'$ consists of $\{0,\infty\}$ to $\{-1,1\}$ as fixed points respectively.
Additionally it also maps $\mathcal{S}_{0,|\lambda_\gamma|^{-1}},\mathcal{S}_{0,|\lambda_\gamma|}$ to $S_{z_1,r_1}, S_{z'_1,r'_1}$ respectively. 
Now we will use basic
formulas to determine the values of $z_1,z'_1,r_1,r'_1$ (see page $91$ of \cite{MCD}). By \cite{MCD} we have,
\begin{eqnarray*}
r_1&=&\left|\frac{-|\lambda_\g|^{-2}-1}{-|\lambda_\g|^{-2}+1}-\frac{|\lambda_\g|^{-1}-1}{|\lambda_\g|^{-1}+1}\right|\\
&=&\frac{2|\lambda_\g|}{|\lambda_\g|^2-1}\\
r'_1&=&\left|\frac{-|\lambda_\g|^2-1}{-|\lambda_\g|^2+1}-\frac{|\lambda_\g|-1}{|\lambda_\g|+1}\right|\\
&=&\frac{2|\lambda_\g|}{|\lambda_\g|^2-1}
\end{eqnarray*}
This gives, 
\[r_1+r'_1=\frac{4|\lambda_\g|}{|\lambda_\g|^2-1}.\]
The distance between the centers is,
\[|z_1-z'_1|=\left|\frac{-|\lambda_\g|^{-2}-1}{-|\lambda_\g|^{-2}+1}-\frac{-|\lambda_\g|^2-1}{-|\lambda_\g|^2+1}\right|
=2\frac{|\lambda_\g|^2+1}{|\lambda_\g|^2-1}.\]
Since $(|\lambda_\g|^2+1)-2|\lambda_\g|=(|\lambda_\g|-1)^2>0,$ implies $\mathcal{S}_{z_1,r_1},\mathcal{S}_{z'_1,r'_1}$ are disjoint.
By conjugating $\phi\gamma'\phi^{-1}$ with $\psi(x)=x+\frac{z_{\gamma,+}+z_{\gamma,-}}{z_{\gamma,+}-z_{\gamma,-}}$ we map the fixed points 
$\{-1,1\}$ to $\left\{\frac{2z_{\g,-}}{z_{\g,+}-z_{\g,-}},\frac{2z_{\g,+}}{z_{\g,+}-z_{\g,+}}\right\}.$ 
Since $\psi(x)$ is a translation transformation (i.e euclidian isometry), the circles are then mapped to circles $\mathcal{S}_{z_2,r_2},\mathcal{S}_{z'_2,r'_2}$
which have same radius as before and also preserves the disjointness as before. Then by conjugating $\psi\phi\g\phi^{-1}\psi^{-1}$ with $\theta(x)=\frac{z_{\g,+}-z_{\g,-}}{2}$
sends $\left\{\frac{2z_{\g,-}}{z_{\g,+}-z_{\g,-}},\frac{2z_{\g,+}}{z_{\g,+}-z_{\g,+}}\right\}$ to points $\{z_{\g,-},z_{\g,+}\}$ and also maps the circles to
$\mathcal{S}_{z_3,r_3},\mathcal{S}_{z'_3,r'_3}.$ We have that $r_3,r'_3=|z_{\g,+}-z_{\g,-}|\frac{|\lambda_\g|}{|\lambda_\g|^2-1}$ and also the disjointness is preserved.
Also we have $\g=\theta\psi\phi\g'\phi^{-1}\psi^{-1}\theta^{-1}.$
Therefore we get that the sum of the radius of our disjointed circles are given by,
\[r_3+r_3'=|z_{\gamma,-}-z_{\gamma,+}|\frac{2|\lambda_\g|}{|\lambda_\g|^2-1}\]
\end{proof}
For a given $\g$ we will denote the set of disjoint disks $D_{\g,+}, D_{\g,-},$ such that 
$D_{\g,+}=\p\mathcal{S}_{o,r}, D_{\g,+}=\p\mathcal{S}_{o',r'}$ with 
the convention that $z_{\g,+}\in D_{\g,+}$ and $z_{\g,-}\in D_{\g,-}.$ We also set $\mathcal{R}_{\g}=|z_{\g,-}-z_{\g,+}|\frac{1}{|\lambda_\g|-1}$
for rest of the sections.
\begin{prop}\label{bounded-classical}
Given a sequence $<\alpha_n,\beta_n>$ with $\mathfrak{D}_n\to 0$ such that $|\tr(\alpha_n)|<C.$ Assume that $\alpha_n$ have fixed points $0,\infty$ and
$|\lambda_{\alpha_n}|^{-1}\le|\eta_{\beta_n}|\le|\zeta_{\beta_n}|\le |\lambda_{\alpha_n}|$ then either $<\alpha_n,\beta_n>$ or $<\alpha_n,\alpha^{-1}_n\beta_n>$
is classical Schottky group for large $n.$
\end{prop}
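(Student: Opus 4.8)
\emph{Proof proposal.} The plan is to pass to a subsequence and argue from the shape of the Euclidean disks one can attach to $\beta_n$, distinguishing according to whether $|\tr(\beta_n)|$ stays bounded. First note that the hypothesis $|\tr(\alpha_n)|<C$ gives $T_{\alpha_n}<M$ for some $M$, and that $\eta_{\beta_n}=-d_n/c_n$, $\zeta_{\beta_n}=a_n/c_n$ lie in the bounded annulus $D_{|\lambda_{\alpha_n}|^{-1},|\lambda_{\alpha_n}|}$, so $|\zeta_{\beta_n}-\eta_{\beta_n}|$ is bounded and $|\tr(\beta_n)|=|c_n|\,|\zeta_{\beta_n}-\eta_{\beta_n}|$; in particular $|\tr(\beta_n)|\to\infty$ forces $|c_n|\to\infty$, i.e. $\mathfrak R_{\beta_n}=1/|c_n|\to0$. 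Applying Corollary \ref{aor-2} at the foot $x\in\mathcal L_{\alpha_n}$ of the common perpendicular gives $\dis(x,\beta_n x)\ge\frac1{D_n}\log\frac{e^{D_nT_{\alpha_n}}+3}{e^{D_nT_{\alpha_n}}-1}\to\infty$, hence $T_{\beta_n}+2\dis(\mathcal L_{\alpha_n},\mathcal L_{\beta_n})\to\infty$. Thus: if $|\tr(\beta_n)|\to\infty$ the fixed points satisfy $z_{\beta_n,-}\to\eta_{\beta_n}$, $z_{\beta_n,+}\to\zeta_{\beta_n}$ within $O(\mathfrak R_{\beta_n}/|\tr(\beta_n)|)$, and the isometric disks $\{|z-\eta_{\beta_n}|<\mathfrak R_{\beta_n}\}$, $\{|z-\zeta_{\beta_n}|<\mathfrak R_{\beta_n}\}$ are disjoint and admissible for $\beta_n$; if instead $|\tr(\beta_n)|$ is bounded, then $T_{\beta_n}$ is bounded, so $\dis(\mathcal L_{\alpha_n},\mathcal L_{\beta_n})\to\infty$, which by a standard hyperbolic computation forces $|z_{\beta_n,+}-z_{\beta_n,-}|\to0$, $|c_n|\to\infty$ and $\mathcal R_{\beta_n}\to0$, and Proposition \ref{non-iso} supplies round disjoint disks $\mathcal D_{\beta_n,\pm}$ of radius $\le\mathcal R_{\beta_n}$ about $z_{\beta_n,\pm}$ with $\beta_n(\mathrm{int}\,\mathcal D_{\beta_n,-})\cap\mathrm{int}\,\mathcal D_{\beta_n,+}=\emptyset$. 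In both cases one ends up with a pair of disjoint round $\beta_n$-disks of radius $\varrho_n\to0$ whose centres lie in the closed annulus $D_{|\lambda_{\alpha_n}|^{-1},|\lambda_{\alpha_n}|}$.

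For $\alpha_n$ I use the concentric round disks $D=\{|z|<\rho_n\}$ and $D'=\overline{\mathbb C}\setminus\overline{\{|z|<|\lambda_{\alpha_n}|^2\rho_n\}}$, which automatically satisfy $\alpha_n(\partial D)=\partial D'$, $\alpha_n(D)\cap D'=\emptyset$, $D\cap D'=\emptyset$. Writing $m_{1,n}\le m_{2,n}$ for the moduli of the centres of the two $\beta_n$-disks, the four disks are pairwise disjoint for a suitable $\rho_n$ exactly when $|\lambda_{\alpha_n}|^2m_{1,n}-m_{2,n}>(1+|\lambda_{\alpha_n}|^2)\varrho_n$. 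Since $m_{1,n}\ge|\lambda_{\alpha_n}|^{-1}$ and $m_{2,n}\le|\lambda_{\alpha_n}|$ the left-hand side is $\ge0$, and it fails to dominate $\varrho_n$ only in the ``tight'' situation where $m_{1,n}=|\lambda_{\alpha_n}|^{-1}+u_n$ and $m_{2,n}=|\lambda_{\alpha_n}|-v_n$ with $|\lambda_{\alpha_n}|^2u_n+v_n=O(\varrho_n)$. In that case I pass to the marking $\langle\alpha_n,\alpha_n^{-1}\beta_n\rangle$: one computes $\eta_{\alpha_n^{-1}\beta_n}=\eta_{\beta_n}$ and $\zeta_{\alpha_n^{-1}\beta_n}=\lambda_{\alpha_n}^{-2}\zeta_{\beta_n}$ (and $|\tr(\alpha_n^{-1}\beta_n)|\to\infty$ when $|\tr(\beta_n)|\to\infty$, with an even smaller isometric radius), so the new centre moduli are $|\lambda_{\alpha_n}|^{-1}-|\lambda_{\alpha_n}|^{-2}v_n\le|\lambda_{\alpha_n}|^{-1}+u_n$, and the admissibility inequality for the new marking reads $|\lambda_{\alpha_n}|-|\lambda_{\alpha_n}|^{-1}-u_n-v_n>(1+|\lambda_{\alpha_n}|^2)\varrho_n'$ with $\varrho_n'\le\varrho_n$. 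Adding the two inequalities, at least one of the two markings is admissible unless $|\lambda_{\alpha_n}|-|\lambda_{\alpha_n}|^{-1}=O(\varrho_n)$, i.e. unless $T_{\alpha_n}=O(\mathfrak R_{\beta_n})$ (resp. $O(\mathcal R_{\beta_n})$). So it remains only to exclude that configuration, which I expect to be the main obstacle.

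To rule out $T_{\alpha_n}=O(\mathfrak R_{\beta_n})$ I would argue self-referentially using the trace estimate of Remark \ref{4A}: $|\tr^2(\beta_n)|\gtrsim\bigl(\tfrac{|\lambda_{\alpha_n}|^{2D_n}+3}{|\lambda_{\alpha_n}|^{2D_n}-1}\bigr)^{1/D_n}e^{-2\dis(\mathcal L_{\alpha_n},\mathcal L_{\beta_n})}$, and since $|\lambda_{\alpha_n}|^{2D_n}-1\asymp D_n(|\lambda_{\alpha_n}|-1)$ the right side is at least $\bigl(\tfrac{c}{D_n(|\lambda_{\alpha_n}|-1)}\bigr)^{1/D_n}e^{-2\dis(\mathcal L_{\alpha_n},\mathcal L_{\beta_n})}$. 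In the tight configuration $|\lambda_{\alpha_n}|-1\lesssim\mathfrak R_{\beta_n}=1/|c_n|\asymp 1/|\tr(\beta_n)|$, so $|\tr^2(\beta_n)|\lesssim(|\lambda_{\alpha_n}|-1)^{-2}$; combining the two bounds forces $\dis(\mathcal L_{\alpha_n},\mathcal L_{\beta_n})$ to grow faster than $\tfrac1{2D_n}\bigl(\log\tfrac1{D_n}+\log\tfrac1{|\lambda_{\alpha_n}|-1}\bigr)$. But $\dis(\mathcal L_{\alpha_n},\mathcal L_{\beta_n})\asymp\log\bigl(|z_{\beta_n,\pm}|/|z_{\beta_n,+}-z_{\beta_n,-}|\bigr)$ (the $z_{\beta_n,\pm}$ being bounded), so this makes $|z_{\beta_n,+}-z_{\beta_n,-}|$, and hence $\mathfrak R_{\beta_n}$ (resp. $\mathcal R_{\beta_n}$), decay like $\bigl(D_n(|\lambda_{\alpha_n}|-1)\bigr)^{1/(2D_n)}$, which is far smaller than $|\lambda_{\alpha_n}|-1$ once $D_n$ is small — contradicting tightness. (Alternatively one can quote Jørgensen's inequality: when $\tr^2(\alpha_n)\to4$ it forces $|\tr[\alpha_n,\beta_n]-2|$ bounded below, and through the standard cross-ratio expression for $\tr[\alpha_n,\beta_n]-2$ in terms of $\lambda_{\alpha_n}^2$, $\lambda_{\beta_n}^2$ and $z_{\beta_n,+}z_{\beta_n,-}/(z_{\beta_n,+}-z_{\beta_n,-})^2$ this yields a lower bound on $|z_{\beta_n,+}-z_{\beta_n,-}|^{-1}$ of exactly the needed order.) Having excluded the tight case, for all large $n$ either $\langle\alpha_n,\beta_n\rangle$ or $\langle\alpha_n,\alpha_n^{-1}\beta_n\rangle$ carries the circular Schottky disks constructed above, i.e. is a classical Schottky group, which is the assertion.
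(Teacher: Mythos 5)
Your overall strategy (isometric disks when $|\tr(\beta_n)|\to\infty$, the disks of Proposition \ref{non-iso} when it stays bounded, concentric circles for $\alpha_n$, and the switch to $\langle\alpha_n,\alpha_n^{-1}\beta_n\rangle$ in the borderline configuration) is reasonable and close in spirit to the rank-two argument the paper is quoting — note that the paper itself gives no independent proof here, but simply cites Theorem 1.1 and the proof of Theorem 6.1 of \cite{HS}. However, as written your argument has two genuine gaps. First, in the bounded-trace case you assert $\mathcal R_{\beta_n}\to0$, but $\mathcal R_{\beta_n}=|z_{\beta_n,+}-z_{\beta_n,-}|/(|\lambda_{\beta_n}|-1)$, and nothing you prove rules out $|\lambda_{\beta_n}|\to1$ (i.e.\ $T_{\beta_n}\to0$) at a rate comparable to, or faster than, the decay of $|z_{\beta_n,+}-z_{\beta_n,-}|$; the divergence of $\dis(\mathcal L_{\alpha_n},\mathcal L_{\beta_n})$ by itself gives no comparison between the two quantities. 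To get it you must also apply Corollary \ref{aor-2} based at a point of $\mathcal L_{\beta_n}$ (roles of the two generators exchanged), which yields $|z_{\beta_n,+}-z_{\beta_n,-}|\lesssim (D_nT_{\beta_n})^{1/(2D_n)}=o(T_{\beta_n})$; you never invoke this symmetric estimate, and your later ``tight case'' exclusion, which implicitly divides by $|\lambda_{\beta_n}|-1$ (and is in any case only sketched), inherits the same gap.

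Second, the switch of marking is not justified where you need it. Your claim that $|\tr(\alpha_n^{-1}\beta_n)|\to\infty$ whenever $|\tr(\beta_n)|\to\infty$, ``with an even smaller isometric radius'', is false in general: $\tr(\alpha_n^{-1}\beta_n)=\lambda_{\alpha_n}^{-1}a_n+\lambda_{\alpha_n}d_n$, and the cancellation $a_n\approx-\lambda_{\alpha_n}^2d_n$, i.e.\ $\zeta_{\beta_n}\approx\lambda_{\alpha_n}^2\eta_{\beta_n}$, forces $|\eta_{\beta_n}|\approx|\lambda_{\alpha_n}|^{-1}$ and $|\zeta_{\beta_n}|\approx|\lambda_{\alpha_n}|$ — which is exactly the tight configuration in which you invoke the second marking. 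In that sub-case $\alpha_n^{-1}\beta_n$ may have bounded trace and arbitrarily small translation length, so neither its isometric circles nor the disks of Proposition \ref{non-iso} are known to have radius $\varrho_n'\le\varrho_n$; consequently the step where you add the two admissibility inequalities, and the ensuing reduction to excluding $T_{\alpha_n}=O(\varrho_n)$, both lose their footing. A complete argument must control the disk radius of the new generator (again via the displacement estimates applied to $\alpha_n^{-1}\beta_n$, or by the quantitative comparison of $|z_{+}-z_{-}|$ with both $T_{\alpha_n}$ and the relevant translation length) before the final contradiction with tightness can be drawn.
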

\begin{proof}
This is a direct consequence Theorem $1.1$ of \cite{HS}. And the particular choice of generator is given by the proof of Theorem $6.1$ of \cite{HS}.
\end{proof}
From Proposition \ref{bounded-classical} statement let us assume that its true in $k-1$-rank as well, and we will from the $k-1$-rank prove that it is also
true in rank $k$. To do so we will first state the $k-1$-version of the above Proposition.
\begin{prop}\label{k-1}
Given a sequence of $k-1$-generators $<\alpha_{1,n},...,\alpha_{{k-1},n}>$ with $\mathfrak{D}_n\to 0$ such that $|\tr(\alpha_{1,n})|<C.$ Assume that
$\alpha_{1,n}$ have fixed points $0,\infty$ and $|\lambda_{\alpha_{1,n}}|^{-1}\le|\eta_{\alpha_{j,n}}|\le|\zeta_{\alpha_{j,n}}|\le|\lambda_{\alpha_{1,n}}|$
then, $<\alpha_{1,n},\alpha^{a_1}_{1,n}\alpha_{2,n},...,\alpha^{a_{k-1}}_{1,n}\alpha_{j,n}>$ is classical Schottky group for large $n$ and some $a_j\in\{0,1\}.$
\end{prop}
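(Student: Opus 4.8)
Proposition~\ref{k-1} is the rank $(k-1)$ instance of Proposition~\ref{bounded-classical}, and its role here is as the induction hypothesis for an induction on the rank whose base case $k-1=2$ is Proposition~\ref{bounded-classical} itself. The only point in reconciling the two statements is that \ref{bounded-classical} produces a power in $\{0,-1\}$ whereas here we want $a_j\in\{0,1\}$: this is a harmless relabeling, since replacing $\alpha_{1,n}$ by $\alpha_{1,n}^{-1}$ preserves the axis, the fixed points $0,\infty$, and the admissible annulus $|\lambda_{\alpha_{1,n}}|^{-1}\le|z|\le|\lambda_{\alpha_{1,n}}|$, and Corollary~\ref{cor-1} lets one pass between two consecutive powers of $\alpha_{1,n}$ when choosing the marking. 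At the level of the paper's induction the proof is therefore just this: the statement holds for $k-1=2$ by Proposition~\ref{bounded-classical}, and it is assumed for larger rank.

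If instead one wishes to extract Proposition~\ref{k-1} directly from the rank-two statement, the plan would be as follows. First apply Proposition~\ref{bounded-classical} to each of the $k-2$ rank-two free subgroups $\langle\alpha_{1,n},\alpha_{j,n}\rangle$, $2\le j\le k-1$, obtaining for each $j$ an exponent $a_j\in\{0,1\}$ such that, writing $\gamma_{j,n}:=\alpha_{1,n}^{a_j}\alpha_{j,n}$, the Schottky disks $D_{\alpha_{1,n}}$ and $D_{\gamma_{j,n}}$ are disjoint for large $n$; since $D_{\gamma_{j,n}}$ then lies in the bounded complementary region of $D_{\alpha_{1,n}}$, the fixed points $z_{\gamma_{j,n},\pm}$ stay in the fixed bounded annulus furnished by $|\tr(\alpha_{1,n})|<C$. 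Next, because $|\tr(\alpha_{1,n})|<C$ and $D_n\to 0$, Theorem~\ref{aritical} applied to the free generating set $\{\alpha_{1,n},\gamma_{2,n},\dots,\gamma_{k-1,n}\}$ at a point of $\mathcal{L}_{\alpha_{1,n}}$ forces $T_{\gamma_{j,n}}\to\infty$, hence $|\tr(\gamma_{j,n})|\to\infty$, for every $j$ (with $\alpha_{1,n}$ the unique generator of bounded translation length, its axes $\mathcal{L}_{\gamma_{j,n}}$ confined to the bounded region above). One is then in position to invoke Lemma~\ref{classical}: the first two of its conditions follow, along a subsequence, from the pairwise disjointness just obtained together with Proposition~\ref{non-iso} and the fact that the radii of $D_{\gamma_{j,n}}$ decay like $|\tr(\gamma_{j,n})|^{-1}$, so classicality reduces to verifying its third, mutual-separation, condition.

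The main obstacle is precisely that third condition: one must show that after the power adjustments the fixed points of distinct $\gamma_{j,n}$ do not approach one another faster than the disk radii go to zero, i.e. that $|z_{\gamma_{j,n},\pm}-z_{\gamma_{j',n},\pm}|\,|\tr(\gamma_{j,n})|\to\infty$ for $j\ne j'$. This is where the estimates of Section~4 enter in an essential way, in particular their rank-dependent forms (Lemma~\ref{aritical-k}, Corollary~\ref{trace-rank-dis}) when there is more than one non-special generator, and it is the reason the statement is packaged inductively rather than deduced in a single step from the rank-two case.
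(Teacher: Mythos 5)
Your reading matches the paper: Proposition~\ref{k-1} is given no proof there — it is exactly the rank-$(k-1)$ statement assumed as the induction hypothesis ("let us assume that it's true in $k-1$-rank"), with the base case being Proposition~\ref{bounded-classical}, itself cited from \cite{HS}. Your observation that the exponent set $\{0,1\}$ versus the $\{0,-1\}$ of Proposition~\ref{bounded-classical} is only a relabeling (via replacing $\alpha_{1,n}$ by $\alpha_{1,n}^{-1}$) is a reasonable reconciliation, and the rest of your proposal is optional elaboration beyond what the paper supplies.
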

Denote $\mathcal{G}_{\alpha_n;\beta_n}=\inf\{|z_{\beta_n,\pm}-z|: \quad z\in D_{\alpha_n,\pm}\}.$
Note that for a sequence of generators $<\alpha_n,\beta_n>$ with $z_{\alpha_n,\pm},z_{\beta_n,\pm}\in\mathbb{C}$ and $z_{\beta_n,\pm}\not\in D_{\alpha_n,\pm}.$ Suppose
that $\mathcal{G}_{\alpha_n;\beta_n}\mathcal{R}^{-1}_{\alpha_n}\to\infty,$ then $<\alpha_n,\beta_n>$ generates a classical Schottky group for large $n.$

\section{Degenerate types }
This section we classify all possible behaviors such that a given sequence $\G_n$ of Schottky groups with $D_{\G_n}\to 0$ will not eventually contain a subsequence 
of classical Schottky groups. The \emph{obstructions} that a sequence of $\G_n$ to contain any classical Schottky group will be called \emph{degeneracies.} \par
Analyzing degeneracies rely
on the analysis of the behaviors of \emph{centers of isometric circles} of generators. To do this analysis, we must characterize all possible dynamics (here dynamic refers to behaviors with respect to decreasing Hausdorff dimension) of sequence of generators according to behaviors of their centers of circles. To do so we first consider dichotomized behaviors and then utilize them to show general result. More precisely, we show that each of the dichotomized degenerate behavior will lead into a classical Schottky group for sufficiently large $n.$
The dichotomization of degeneracies are into four types as: 
TypeI, Type II,  Type III, Type IV given in the following. In addition, each of these types of degeneracies will have several sub-types which are stated later on separately.
\par
\begin{defn}
For $\tau>0$, we set: 
\[\mf{J}_k(\tau):=\{[\G]\in\mf{J}_{k}| Z_\G>\tau, \exists\text{ for some } \G\in [\G]\}.\] 
\end{defn}
Recall that $\mf{J}_{k}$ denotes Schottky space
of rank $k$. Here $[\Gamma]$ denotes equivalence class of $\G,$ and $\mf{J}_k(\tau)$ is just collection of Schottky groups that have some generating set with 
uniform lower bounds on $Z_\G$ by $\tau$ under conjugation. \par
Let us assume that there exists a sequence $\{[\G_n]\}\subset\mf{J}_k(\tau)$
of nonclassical Schottky groups with strictly decreasing $\mathfrak{D}_n\to 0$. Set $\G_n=<\alpha_{1,n},\alpha_{2,n},...,\alpha_{k,n}>$ with that
$Z_{<\alpha_{1,n},\alpha_{2,n},...,\alpha_{k,n}>} >\tau$. We arrange the generators so  
$|\tr(\alpha_{i,n})|\le |\tr(\alpha_{i+1,n})|$. There are two possibilities: 
\begin{itemize}
\item
$(I)$ There exists a subsequence 
such that,  $|\tr(\alpha_{1,n})|\to\infty$
\item
$(II)$ $|\tr(\alpha_{1,n})|<M$, for some $M>0.$ 
\end{itemize}
Case $(I)$ is trivial. Since $|\tr(\alpha_{i,n_j})|\to \infty$ as $n\to\infty$ for all $1\le i\le k$,
it follows from $Z_{<\alpha_{i,n_j},...,\alpha_{k,n_j}>}>\tau$, there must exists $N$ such that
$<\alpha_{i,n_j},...,\alpha_{i,n_j}>$ becomes classical Schottky groups for $j>N$. A contradiction.\par
Now we consider case $(II)$. \\We work in upper space model $\mathbb{H}^3$. Conjugate $<\alpha_{1,n},...,\alpha_{k,n}>$ by 
a Mobius transformation into 
$\alpha_{1,n}=\bigl(\begin{smallmatrix} \lambda_{\alpha_{1,n}} & 0\\0 & \lambda^{-1}_{\alpha_{1,n}}\end{smallmatrix}\bigr)$
with $|\lambda_{\alpha_{1,n}}|>1$. Denote 
$\alpha_{j,n}=\bigl(\begin{smallmatrix} a_{j,n} & b_{j,n}\\c_{j,n} & d_{j,n}\end{smallmatrix}\bigr)$. Since $|\tr(\alpha_{1,n})|<M$
implies $|\lambda_{\alpha_{1,n}}|<M'$ for some $M'>0,$ it follows from Proposition \ref{trace} and $\mathfrak{D}_n\to 0$, we have $|\tr(\alpha_{j,n)}|\to\infty.$
By replacing $\alpha_{i,n}$ with $\alpha^{-1}_{i,n}$ if necessary, we can assume $|\zeta_{\alpha_{i,n}}|\le|\eta_{\alpha_{i,n}}|, 2\le j\le k.$\par 
Since $Z_{<\alpha_{1,n},...,\alpha_{k,n}>}>\tau$, there exists $\Delta_1,\Delta_2,\Delta_3,\Delta_4>0$ such that, 
$\Delta_1<|z_{\alpha_{j,n},l}|\le|z_{\alpha_{j,n},u}|<\Delta_2,$ and
$\Delta_3<|z_{\alpha_{j,n},l}-z_{\alpha_{j,n},u}|<\Delta_4.$
It follows from Lemma \ref{fix-trace}, $\Delta_1<|\zeta_{\alpha_{j,n}}|\le|\eta_{\alpha_{j,n}}|<\Delta_2$, and
$\Delta_3<\lim_n|\zeta_{\alpha_{j,n}}-\eta_{\alpha_{j,n}}|<\Delta_4.$
\par
For each $n$ and $i\ge 2$, choose integers $k_{i,n},l_{i,n}$ such that:
\[1\le|\zeta_{\alpha_{i,n}}\lambda^{2k_{i,n}}_{1,n}|<|\lambda^2_{1,n}|, \quad 
1\le|\eta_{\alpha_{i,n}}\lambda^{2l_{i,n}}_{1,n}|<|\lambda^2_{1,n}|.\]
We consider the generating set
$<\alpha_{1,n},..., \alpha^{k_{i,n}}_{1,n}\alpha_{i,n}\alpha^{l_{i,n}}_{1,n} ,...,\alpha^{k_{k,n}}_{1,n}\alpha_{k,n}\alpha^{l_{k,n}}_{1,n}>$. Denote
these new generators by $\beta_{i,n}$ for $i\ge 2.$\par
By passing to subsequence and considering inverses if necessary, we have can assume 
$|\eta_{\beta_{i,n}}\lambda^{2l_{i,n}}_{1,n}|\le|\zeta_{\beta_{i,n}}\lambda^{2k_{i,n}}_{1,n}|.$\par 
\begin{defn}[standard form]\label{standard}
Given a set of generators $S_\G=\{\alpha_{1},...,\alpha_{k}\}$ of Schottky group $\G$, we say $S_\G$ is of \emph{standard form} if (up to conjugation by Mobius transformation), $z_{\alpha_{1},\pm}=\{0,\infty\}$ and
\[1\le|\zeta_{\alpha_{i}}|<|\lambda^2_{\alpha_1}|, \quad 
1\le|\eta_{\alpha_{i}}|<|\lambda^2_{\alpha_1}|, 2\le i\le k.\]
\end{defn}

\begin{defn}[$k-1$ classical]\label{k-1-classical}
Given $\G$ rank-$k$ Schottky group we say $\G$ is $k-1$ \emph{classical} if every rank $k-1$ subgroup of $G$ is a classical Schottky group.
\end{defn}
In Section \ref{t-space} we will show that, given a sequence of $\G_n$ Schottky group which is $k-1$ classical and $\mathfrak{D}_{\G_n}\to 0$, we can always choose a subsequence of
generators $S_{\G_n}$ such that it's $k-1$ classical of standard form. \par
Another $k-1$ classical sequence (called normal sequence) one can choose is given next which is based on location of fixed points. The idea is that when the Hausdorff dimension is sufficiently small then both normal sequence and standard form sequence coincide.  
\begin{defn}\label{normal-def}
Let $\G_{n}$ be a sequence of Schottky groups with generating set $S_{\G_{n}}=<\alpha_{1,n},\alpha_{2,n},...,\alpha_{k,n}>$ such that
$\alpha_{1,n}$ have fixed points $0,\infty.$ We say $S_{\G_{n}}$ is a normal sequence of generating sets if (up to conjugation by Mobius transformation),
all isometric circles of $\alpha_{j,k}$ are strictly bounded between $1$ and $|\lambda_{\alpha_{1,n}}|^2$ for $2\le j\le k$ and large $n$.
\end{defn}

\begin{prop}[Normal sequence]\label{normal-exists}
Let $\G_n$ be a sequence of Schottky groups which is $k-1$ classical with $\mathfrak{D}_{\G_n}\to 0.$ Then there exists a normal subsequence of generating set
$S_{\G_n}$ (we use same index for subsequence ).
\end{prop}
\begin{proof}
Since $\G_n$ is $k-1$ classical, we can choose a sequence of generating set $S_n$ so that every pair $\alpha_{1,n},\alpha_{j,n}$ is classical for $1\le j\le k.$ We assume by same convention that $\alpha_{1,n}$ is of minimal $|\mbox{trace}|.$
Conjugate $\alpha_{1,n}$ to have fixed points $0,\infty$ and denote the new sequence by the same notation. It follows from Corollary \ref{trace-rank-dis} and Remark \ref{4A}, isometric circles of $\alpha_{j,n}$ have radius $\to 0$ for all $2\le j\le k.$ Since all fixed points are strictly bounded between $1$ and $|\lambda_{\alpha_{1,n}}|^2,$ the result follows.
\end{proof}

The behaviors of elements of our sequence of generators are done according to location of their centers of circles of $\alpha_{j,n}$ for $2\le j\le k.$ Lets assume we have a sequence of generating sets which is $k-1$ classical of standard form, then we can
categorize the obstructions of the sequence to contain classical Schottky groups by disjointness of circles of $\alpha_{j,n}$ for $2\le j\le k$ to circles of $\alpha_{1,n}.$ 
\begin{defn}[Degenerate types]\label{cases}
Using above notations, let $\G_n$ be a sequence of rank $k$ Schottky groups which is $k-1$ classical with $\mathfrak{D}_n\to 0$. Let $S_n$ be a sequence of generating sets which is $k-1$ classical of standard form.  Degenerate types are given by:
\begin{itemize}
\item 
Type I: Double boundary degeneracy  
 \[||\zeta_{\alpha_{i,n}}\lambda^{2k_{i,n}}_{1,n}|-|\eta_{\alpha_{i,n}}\lambda^{2l_{i,n}}_{1,n}||\to 0.\]
\item
Type II: Single boundary degeneracy 
\[||\zeta_{\alpha_{i,n}}\lambda^{2k_{i,n}}_{1,n}|-|\lambda_{1,n}|^2|\to 0.\]
\item
Type III: Elliptical degeneracy  
\[|\zeta_{\alpha_{i,n}}\lambda^{2k_{i,n}}_{1,n}|\to 1.\] 
\item 
Type IV: Bounded 
\[||\zeta_{\alpha_{i,n}}\lambda^{2k_{i,n}}_{1,n}|-|\eta_{\alpha_{i,n}}\lambda^{2l_{i,n}}_{1,n}||>C,\text{ and }
||\zeta_{\alpha_{i,n}}\lambda^{2k_{i,n}}_{1,n}|-|\lambda_{1,n}|^2|>C.\]
\end{itemize}
\end{defn}
In the next four sections we will show that each of these degenerate types will lead to classical Schottky group for large $n.$
\section{Bounded Type IV} 
We show this type of degeneracy is classical for large $n.$\par
We use the same notation index for subsequences. For large $n$ there exists 
$1<|\lambda|<|\lambda_{\alpha_{1,n}}|$, $\sigma<1$, such that  
$\max_i|\zeta_{i,n}\lambda^{2k_{i,n}}_{1,n}|\to \sigma|\lambda|^2$.
Let $\psi_n$ be the Mobius transformation that fixes $\{0,\infty\}$ 
defined by $\psi(x)=\frac{x}{\sqrt{\sigma}|\lambda|}$, $x\in\mathbb{C}.$ Let $1\le\chi\le \sigma|\lambda|^2$
be given as $\chi=\min_i\lim_n|\eta_{i,n}\lambda^{2l_{i,n}}_{1,n}|.$
Then $\min_i|\eta_{\psi\beta_{i,n}\psi^{-1}}|\to\frac{\chi}{\sqrt{\sigma}|\lambda|}$
and $\max_i|\zeta_{\psi\beta_{i,n}\psi^{-1}}|\to\sqrt{\sigma}|\lambda|.$ Let $|\tr(\beta_{i^*,n})|=\min|\tr(\beta_{i,n})|.$
It follows from Lemma \ref{fix-trace}, for $n$ large, 
\[\max\{\max_i|z_{\psi\beta_{i,n}\psi^{-1},\mp}-\eta_{\psi\beta_{i,n}\psi^{-1}}|,
\max_i|z_{\psi\beta_{i,n}\psi^{-1},\pm}-\zeta_{\psi\beta_{i,n}\psi^{-1}}|\}
<\frac{\rho}{|\tr(\beta_{i^*,n})|}.\]
Hence there exists $\rho', \rho'',\rho'''>0$ such that,
\[\left|z_{\psi\beta_{i,n}\psi^{-1},+}-z_{\psi\beta_{i,n}\psi^{-1},-}\right|>
\left||\sqrt{\sigma}|\lambda|-\frac{\chi}{\sqrt{\sigma}|\lambda|}\right|-\frac{\rho'}{|\tr(\beta_{i^*,n})|},\]
and
\[\left|\frac{1}{z_{\psi\beta_{i,n}\psi^{-1},+}}-\frac{1}{z_{\psi\beta_{i,n}\psi^{-1},-}}\right|>
\rho''\left||\frac{\sqrt{\sigma}|\lambda|}{\chi}-\frac{1}{\sqrt{\sigma}|\lambda|}\right|-\frac{\rho'''}{|\tr(\beta_{i^*,n})|}.\]
This implies that there exist $\Delta>0$ such that $Z_{\psi\beta_{i,n}\psi^{-1}}>\Delta.$
Hence by applying Proposition \ref{trace} to the generators $<\psi\alpha_{1,n}\psi^{-1},\psi\beta_{i,n}\psi^{-1}>$,
implies $|\tr(\psi\beta_{i,n}\psi^{-1})|\to\infty.$\par
Set $\kappa=\sqrt{\frac{\sigma+1}{2}}|\lambda|.$ Then for sufficiently large $n$ we have 
$\kappa<|\lambda_{1,n}|,$ $\kappa^{-1}<|z_{\psi\beta_{i,n}\psi^{-1},l}|\le
|z_{\psi\beta_{i,n}\psi^{-1},u}|<\kappa.$
And obviously,
\[ \lim_n\frac{1}{(\kappa-|z_{\psi\beta_{i,n}\psi^{-1},u}|)|\tr(\psi\beta_{i,n}\psi^{-1})|}=0,\]
\[\lim_n\frac{1}{(|z_{\psi\beta_{i,n}\psi^{-1},l}|-\kappa^{-1})|\tr(\psi\beta_{i,n}\psi^{-1})|}=0.\]
Also by $\kappa(S_{\G_n})\to\infty,$
\[|z_{\beta_{i,n},\pm}-z_{\beta_{j,n},\pm}||\tr(\beta_{i,n})|\to\infty,\quad\mbox{for}\quad |\tr(\beta_{i,n})|\le|\tr(\beta_{j,n})|\] 
Therefore, $<\psi\alpha_{1,n}\psi^{-1},\psi\beta_{2,n}\psi^{-1},...,\psi\beta_{k,n}\psi^{-1}>$ satisfies the second set of conditions of
Lemma \ref{classical}, and so by Lemma \ref{classical}, these will be classical generators for large $n$, a contradiction.

\section{Double boundary degeneracy Type I}
We show this type of degeneracy is classical for large $n.$\par
This case we need to further break into two possibilities based on whether $\alpha_{1,n}$ converging into a elliptical element or a nonellipitical element. \par
By passing to a subsequence if necessary,
we have two possibilities: 
\begin{itemize}
\item
Type I$_1$: $|\lambda_{\alpha_{1,n}}|^2-1$ is monotonically decreasing to $0$.  
\item
Type I$_2$: There exists $\lambda>1$, such that
$|\lambda_{\alpha_{1,n}}|\ge|\lambda|$ for large $n$.
\end{itemize}
\subsection{Type I$_1$} 
In the degeneration into elliptical case, we need to consider whether the collapsing of fixed points is at must faster rate relative to collapsing of fixed points to circles: $(i),(ii)$.\par
Here we either have 
\[(i)\quad\quad\limsup_n\left|| \zeta_{ \alpha^{k_{i,n}}_{1,n}\beta_{i,n}\alpha^{l_{i,n}}_{1,n}}  |-|\lambda_{\alpha_{1,n}}|^2\right|
\left|\zeta_{ \alpha^{k_{i,n}}_{1,n}\beta_{i,n}\alpha^{l_{i,n}}_{1,n}} -\eta_{ \alpha^{k_{i,n}}_{1,n}\beta_{i,n}\alpha^{l_{i,n}}_{1,n}}\right|^{-1}<\infty \]
or
\[(ii)\quad\quad\liminf_n\left||\zeta_{ \alpha^{k_{i,n}}_{1,n}\beta_{i,n}\alpha^{l_{i,n}}_{1,n}}|-|\lambda_{\alpha_{1,n}}|^2\right|
\left|\zeta_{ \alpha^{k_{i,n}}_{1,n}\beta_{i,n}\alpha^{l_{i,n}}_{1,n}}-\eta_{ \alpha^{k_{i,n}}_{1,n}\beta_{i,n}\alpha^{l_{i,n}}_{1,n}}\right|^{-1}\to\infty.\] 
\subsubsection{ Case $(ii)$} 
In this subcase, we show that under appropriate Nielsen transformation and conjugation, we can reduce to case $(i).$ Hence it would be enough just to consider subcase $(i)$, which is done next. \par
Take Mobius transformation $\psi_{n}$ defined by, 
$\psi_{i,n}(x)=\frac{x}{\zeta_{\alpha^{k_{i,n}}_n\beta_{i,n}\alpha^{l_{i,n}}_{1,n}}}.$ Set $\psi_n(x)$ be the $\psi_{i*,n}$ with maximal $|\psi_{i,n}|, 2\le i\le k.$
We conjugate $\alpha^{k_{i,n}}_{1,n}\beta_{i,n}\alpha^{l_{i,n}}_{1,n}$ by $\psi_n.$
Consider $(\psi_{n}\alpha^{k_{i,n}}_{1,n}\beta_{i,n}\alpha^{l_{i,n}}_{1,n}\psi^{-1}_{n})^{-1}.$  
Since by factor out $\zeta_{\alpha^{k_{i,n}}_{1,n}\beta_{i,n}\alpha^{l_{i,n}}_{1,n}}^2\lambda_{1,n}^{-2}$ in $(ii)$,
\[\liminf_n |\zeta_{\alpha^{k_{i,n}}_{1,n}\beta_{i,n}\alpha^{l_{i,n}}_{1,n}}|^2|\lambda_{1,n}|^{-2}\left||1-|\zeta_{\alpha^{k_{i,n}}_n\beta_{i,n}\alpha^{l_{i,n}}_{1,n}}|^{-1}|\lambda_{1,n}|^2\right|\left|\zeta^{-1}_{\alpha^{k_{i,n}}_{1,n}\beta_{i,n}\alpha^{l_{i,n}}_{1,n}}\lambda^2_{1,n}-\lambda_{1,n}^2\right|^{-1}\to\infty.\] 
\[\liminf_n \left||1-|\zeta_{\alpha^{k_{i,n}}_{1,n}\beta_{i,n}\alpha^{l_{i,n}}_{1,n}}^{-1}\lambda^2_n|\right|\left|\zeta^{-1}_{\alpha^{k_{i,n}}_{1,n}\beta_{i,n}\alpha^{l_{i,n}}_{1,n}}\lambda^2_{1,n}-\lambda_{1,n}^2\right|^{-1}\to\infty.\]
Since $\zeta_{\alpha^{k_{i,n}}_{1,n}\beta_{i,n}\alpha^{l_{i,n}}_{1,n}}^{l_{i,n}-1}\lambda^2_{1,n}= \eta_{\alpha^{k_{i,n}}_n\beta_{i,n}\alpha^{-1}_{1,n}}$ and 
$\zeta_{(\psi_n\alpha^{k_{i,n}}_{1,n}\beta_{i,n}\alpha_{1,n}^{l_{i,n}-1}\psi^{-1}_n)^{-1}} =\eta_{\psi_n\alpha^{k_{i,n}}_{1,n}\beta_{i,n}\alpha^{l_{i,n}-1}_{1,n}\psi^{-1}_n}$ we have,
\[\liminf_n \left||1-|\eta_{\alpha^{k_{i,n}}_{1,n}\beta_{i,n}\alpha^{l_{i,n}-1}_{1,n}}|^2\right|\left|\eta_{\alpha^{k_{i,n}}_{1,n}\beta_{i,n}\alpha_{1,n}^{l_{i,n}-1}}-\lambda_{1,n}^2\right|^{-1}\to\infty,\] 
giving,
\[\limsup_n\left||\zeta_{(\psi_n\alpha^{k_{i,n}}_{1,n}\beta_{i,n}\alpha_{1,n}^{l_{i,n}-1}\psi^{-1}_n)^{-1}}|-|\lambda_{1,n}|^2\right|
\left|\zeta_{(\psi_n\alpha^{k_{i,n}}_{1,n}\beta_{1,n}\alpha_{1,n}^{l_{i,n}-1}\psi^{-1}_n)^{-1}}-1\right|^{-1}<\infty.\]
The generator $(\psi_n\alpha^{k_{i,n}}_{1,n}\beta_{i,n}\alpha^{l_{i,n}-1}_{1,n}\psi^{-1}_n)^{-1}$ satisfies $(i)$. 
Hence replacing the generators if necessary
we can always assume the generators satisfy $(i)$. And without lost of generality we will assume that
$<\alpha_{1,n},...,\alpha^{k_{i,n}}_{1,n}\beta_{i,n}\alpha^{l_{i,n}}_{1,n},...>$ satisfies $(i).$\par
\text{}\\
\subsubsection{Consider $(i).$}
Here we must consider whether the collapsing is bounded from below or not: $(i_1),(i_2).$\par
In this case, we have either:
\[(i_1)\quad\quad\limsup_n\left||\zeta_{ \alpha^{k_{i,n}}_{1,n}\beta_{i,n}\alpha^{l_{i,n}}_{1,n}}|-|\lambda_{\alpha_{1,n}}|^2\right|
\left|\zeta_{ \alpha^{k_{i,n}}_{1,n}\beta_{i,n}\alpha^{l_{i,n}}_{1,n}}-\eta_{ \alpha^{k_{i,n}}_{1,n}\beta_{i,n}\alpha^{l_{i,n}}_{1,n}}\right|^{-1}>\delta>0, \]
or
\[(i_2)\quad\quad\limsup_n\left||\zeta_{ \alpha^{k_{i,n}}_{1,n}\alpha_{i,n}\alpha^{l_{i,n}}_{1,n}}|-|\lambda_{\alpha_{1,n}}|^2\right|
\left|\zeta_{ \alpha^{k_{i,n}}_{1,n}\beta_{i,n}\alpha^{l_{i,n}}_{1,n} } -\eta_{ \alpha^{k_{i,n}}_{1,n}\beta_{i,n}\alpha^{l_{i,n}}_{1,n}}\right|^{-1}=0.\]
First we present the proof for $(i_1).$\par
\subsubsection{ Consider $(i_1).$}
\begin{lem}\label{axes}
There exists $c>0$ such that, 
\[\dis(\mathcal{L}_{\alpha_{1,n}},\mathcal{L}_{\alpha^{k_{i,n}}_{1,n}\alpha_{i,n}\alpha^{l_{i,n}}_{1,n}})<\log(\frac{c}{|\lambda_{1,n}|^2-1}). \] 
\end{lem}
\begin{proof}
We first show that
$\frac{1}{|\tr(\alpha_{i,n})|(|\lambda_{1,n}|^2-1)}\to 0$. From Proposition \ref{trace} we have,
\begin{eqnarray*}
\lim\frac{1}{|\tr(\alpha_{i,n})|(|\lambda_{1,n}|^2-1)}&\le&\lim\rho\left(\frac{|\lambda_{1,n}|^{2\mathfrak{D}_n}-1}
{(|\lambda_{1,n}|^{2\mathfrak{D}_n}+3)(|\lambda_{1,n}|^2-1)^{2\mathfrak{D}_n}}\right)^{\frac{1}{2\mathfrak{D}_n}}
\end{eqnarray*}
and for large $n,$ we have $|\lambda_{1,n}|^{2\mathfrak{D}_n}-1<|\lambda_{1,n}|^2-1$ which implies that for some
$\rho'>0$,
\[\lim\frac{1}{|\tr(\alpha_{i,n})|(|\lambda_{1,n}|^2-1)}\le\lim\rho'(|\lambda_{1,n}|^2-1)^{\frac{1-2\mathfrak{D}_n}{2\mathfrak{D}_n}}=0.\]
It follows from Lemma \ref{fix-trace}, 
\begin{eqnarray*}
\left|\zeta_{ \alpha^{k_{i,n}}_{1,n}\alpha_{i,n}\alpha^{l_{i,n}}_{1,n} }-\eta_{ \alpha^{k_{i,n}}_{1,n}\alpha_{i,n}\alpha^{l_{i,n}}_{1,n} }\right|-
\rho'|\tr(\alpha_{i,n})|^{-1}\le\left|z_{ \alpha^{k_{i,n}}_{1,n}\alpha_{i,n}\alpha^{l_{i,n}}_{1,n},- }-
z_{ \alpha^{k_{i,n}}_{1,n}\alpha_{i,n}\alpha^{l_{i,n}}_{1,n},+ }\right|\\
\le
 \left|\zeta_{ \alpha^{k_{i,n}}_{1,n}\alpha_{i,n}\alpha^{l_{i,n}}_{1,n} }-\eta_{ \alpha^{k_{i,n}}_{1,n}\alpha_{i,n}\alpha^{l_{i,n}}_{1,n} }\right|+
\rho''|\tr(\alpha_{i,n})|^{-1}
\end{eqnarray*}
Since $1\le|\zeta_{ \alpha^{k_{i,n}}_{1,n}\alpha_{i,n}\alpha^{l_{i,n}}_{1,n} }|<|\lambda_{1,n}|^2,$ we have 
\[\frac{\left|z_{ \alpha^{k_{i,n}}_{1,n}\alpha_{i,n}\alpha^{l_{i,n}}_{1,n},-}-z_{ \alpha^{k_{i,n}}_{1,n}\alpha_{i,n}\alpha^{l_{i,n}}_{1,n},+}\right|}
{|\lambda_{1,n}|^2-1}\ge 
\frac{\left|\zeta_{ \alpha^{k_{i,n}}_{1,n}\alpha_{i,n}\alpha^{l_{i,n}}_{1,n} }-\eta_{ \alpha^{k_{i,n}}_{1,n}\alpha_{i,n}\alpha^{l_{i,n}}_{1,n} }\right|}
{|\lambda_{1,n}|^2-1}-\frac{\rho'}{|\tr(\alpha_{i,n})|(|\lambda_{1,n}|^2-1)}. \]
By the condition of $(i_{1})$ we have,
\[\frac{\left|\zeta_{ \alpha^{k_{i,n}}_{1,n}\alpha_{i,n}\alpha^{l_{i,n}}_{1,n} }-\eta_{ \alpha^{k_{i,n}}_{1,n}\alpha_{i,n}\alpha^{l_{i,n}}_{1,n} }\right|}
{|\lambda_{1,n}|^2-1}=
\frac{|\zeta_{ \alpha^{k_{i,n}}_{1,n}\alpha_{i,n}\alpha^{l_{i,n}}_{1,n} }-\eta_{ \alpha^{k_{i,n}}_{1,n}\alpha_{i,n}\alpha^{l_{i,n}}_{1,n} }|}
{|\lambda_{1,n}|^2-|\zeta_{ \alpha^{k_{i,n}}_{1,n}\alpha_{i,n}\alpha^{l_{i,n}}_{1,n} }|+
|\zeta_{ \alpha^{k_{i,n}}_{1,n}\alpha_{i,n}\alpha^{l_{i,n}}_{1,n} }|-1}>\frac{1}{M+1},\]
for some $M>0.$
Hence for large $n$ there exists $\kappa>0$ such that,
\[\frac{\left|z_{ \alpha^{k_{i,n}}_{1,n}\alpha_{i,n}\alpha^{l_{i,n}}_{1,n},-}-z_{ \alpha^{k_{i,n}}_{1,n}\alpha_{i,n}\alpha^{l_{i,n}}_{1,n},+}\right|}
{|\lambda_{1,n}|^2-1}>\frac{1}{M+1}-
\frac{\rho'}{|\tr(\alpha_{i,n})|(|\lambda_{1,n}|^2-1)}>\kappa.\] 
For the upper bounds we have, 
$| z_{ \alpha^{k_{i,n}}_{1,n}\alpha_{i,n}\alpha^{l_{i,n}}_{1,n},-} -z_{ \alpha^{k_{i,n}}_{1,n}\alpha_{i,n}\alpha^{l_{i,n}}_{1,n},+} |
<2|\lambda_{1,n}|+\rho''|\tr(\alpha_{i,n})|^{-1}.$ 
Note that 
\[\dis(\mathcal{L}_{\alpha_{1,n}},\mathcal{L}_{\alpha^{k_{i,n}}_{1,n}\alpha_{i,n}\alpha^{l_{i,n}}_{1,n}})=
\inf\left\{\dis(h_1,h_2)|h_1\in\mathcal{L}_{\alpha_{1,n}},h_2\in\mathcal{L}_{\alpha^{k_{i,n}}_{1,n}\alpha_{i,n}\alpha^{l_{i,n}}_{1,n}}\right\}.\]
Set $h_j=(z_j,\theta_j); j=1,2$ then 
for a upper bound we can take 
\[(z_1,\theta_1)=(0, |\zeta_{ \alpha^{k_{i,n}}_{1,n}\alpha_{i,n}\alpha^{l_{i,n}}_{1,n},l } |+
\frac{1}{2}|z_{ \alpha^{k_{i,n}}_{1,n}\alpha_{i,n}\alpha^{l_{i,n}}_{1,n},u} -z_{ \alpha^{k_{i,n}}_{1,n}\alpha_{i,n}\alpha^{l_{i,n}}_{1,n},l} |),\]
\[(z_2,\theta_2)=(\frac{1}{2}(z_{ \alpha^{k_{i,n}}_{1,n}\alpha_{i,n}\alpha^{l_{i,n}}_{1,n},u} +z_{ \alpha^{k_{i,n}}_{1,n}\alpha_{i,n}\alpha^{l_{i,n}}_{1,n},l} ),
\frac{1}{2}|z_{ \alpha^{k_{i,n}}_{1,n}\alpha_{i,n}\alpha^{l_{i,n}}_{1,n},u}-z_{ \alpha^{k_{i,n}}_{1,n}\alpha_{i,n}\alpha^{l_{i,n}}_{1,n},l} |).\]
By Lemma \ref{fix-trace}, 
\[1-\sigma_1|\tr(\alpha_{i,n})|^{-1}<|z_{ \alpha^{k_{i,n}}_{1,n}\alpha_{i,n}\alpha^{l_{i,n}}_{1,n},l }|
\le|z_{ \alpha^{k_{i,n}}_{1,n}\alpha_{i,n}\alpha^{l_{i,n}}_{1,n},u }|\le|\lambda_{1,n}|^2+\sigma_2|\tr(\alpha_{i,n})|^{-1}\]
and above estimates for $|z_{ \alpha^{k_{i,n}}_{1,n}\alpha_{i,n}\alpha^{l_{i,n}}_{1,n},u } -z_{ \alpha^{k_{i,n}}_{1,n}\alpha_{i,n}\alpha^{l_{i,n}}_{1,n},l } |$ 
we have,
\begin{multline*}
\cosh\dis(\mathcal{L}_{\alpha_{1,n}},\mathcal{L}_{\alpha^{k_{i,n}}_{1,n}\alpha_{i,n}\alpha^{l_{i,n}}_{1,n}})\\
\le
\frac{\frac{1}{4}|z_{ \alpha^{k_{i,n}}_{1,n}\alpha_{i,n}\alpha^{l_{i,n}}_{1,n},u }+z_{ \alpha^{k_{i,n}}_{1,n}\alpha_{i,n}\alpha^{l_{i,n}}_{1,n},l}|^2+
|z_{ \alpha^{k_{i,n}}_{1,n}\alpha_{i,n}\alpha^{l_{i,n}}_{1,n},l} |^2}
{|z_{ \alpha^{k_{i,n}}_{1,n}\alpha_{i,n}\alpha^{l_{i,n}}_{1,n},u}-z_{ \alpha^{k_{i,n}}_{1,n}\alpha_{i,n}\alpha^{l_{i,n}}_{1,n},l}|
(|z_{ \alpha^{k_{i,n}}_{1,n}\alpha_{i,n}\alpha^{l_{i,n}}_{1,n},l} |+\frac{1}{2}|z_{ \alpha^{k_{i,n}}_{1,n}\alpha_{i,n}\alpha^{l_{i,n}}_{1,n},u}-
z_{ \alpha^{k_{i,n}}_{1,n}\alpha_{i,n}\alpha^{l_{i,n}}_{1,n},l} |)}+1\\
<\frac{|\lambda_{1,n}|^2+\rho'''|\tr(\alpha_{i,n})|^{-1}
+\kappa(|\lambda_{1,n}|^2-1)+1}{\kappa(|\lambda_{1,n}|^2-1)},\quad \rho'''>0.
\end{multline*}
This last inequality implies the Lemma.
\end{proof}
\begin{lem}\label{iso-ratio}
\[\lim_n\frac{1}{|\tr(\alpha^{k_{i,n}}_{1,n}\alpha_{i,n}\alpha^{l_{i,n}}_{1,n})|(|\lambda_{1,n}|^2-1)}=0\]
\end{lem}
\begin{proof}
It follows from Proposition \ref{trace} and  
Lemma \ref{axes}, there exists $\rho>0$ such that,
\[\left|\tr(\alpha^{k_{i,n}}_{1,n}\alpha_{i,n}\alpha^{l_{i,n}}_{1,n})\right|\ge\rho\left(\frac{(|\lambda_{1,n}|^{2\mathfrak{D}_n}+3)(|\lambda_{1,n}|^2-1)^{2\mathfrak{D}_n}}
{|\lambda_{1,n}|^{2\mathfrak{D}_n}-1}\right)^{\frac{1}{2\mathfrak{D}_n}}.\]
hence we have,
\[\lim\frac{1}{|\tr(\alpha^{k_{i,n}}_{1,n}\alpha_{i,n}\alpha^{l_{i,n}}_{1,n})|(|\lambda_{1,n}|^2-1)}\le
\lim\rho'\left(\frac{|\lambda_{1,n}|^{2\mathfrak{D}_n}-1}{(|\lambda_{1,n}|^{2\mathfrak{D}_n}+3)(|\lambda_{1,n}|^2-1)^{4\mathfrak{D}_n}}\right)^{\frac{1}{2\mathfrak{D}_n}}.\]
Since $|\lambda_{1,n}|^{2\mathfrak{D}_n}-1<|\lambda_{1,n}|^2-1$ for large $n$ we have,
\[\lim\frac{1}{|\tr(\alpha^{k_{i,n}}_{1,n}\alpha_{i,n}\alpha^{l_{i,n}}_{1,n})|(|\lambda_{1,n}|^2-1)}\le\lim\rho''(|\lambda_{1,n}|^2-1)^\frac{1-4\mathfrak{D}_n}{2\mathfrak{D}_n}=0.\]
\end{proof}
For large $n$ by condition $(i_1),$ we have $\delta(|\lambda_{1,n}|^2-1)<|\lambda_{1,n}|^2-|\zeta_{ \alpha^{k_{i,n}}_{1,n}\alpha_{i,n}\alpha^{l_{i,n}}_{1,n} } |$ 
then by Lemma \ref{fix-trace},
$||z_{ \alpha^{k_{i,n}}_{1,n}\alpha_{i,n}\alpha^{l_{i,n}}_{1,n},u } |-|\zeta_{ \alpha^{k_{i,n}}_{1,n}\alpha_{i,n}\alpha^{l_{i,n}}_{1,n} }||
<\frac{\chi}{|\tr(\alpha^{k_{i,n}}_{1,n}\alpha_{i,n}\alpha^{l_{i,n}}_{1,n})|}$ for some $\chi>0$ we have,
\begin{eqnarray*}
|\lambda_{1,n}|^2-|z_{ \alpha^{k_{i,n}}_{1,n}\alpha_{i,n}\alpha^{l_{i,n}}_{1,n},u } |&>&|\lambda_{1,n}|^2-
|\zeta_{ \alpha^{k_{i,n}}_{1,n}\alpha_{i,n}\alpha^{l_{i,n}}_{1,n} } |-\frac{\chi}{|\tr(\alpha^{k_{i,n}}_{1,n}\alpha_{i,n}\alpha^{l_{i,n}}_{1,n})|}\\
&>&\delta(|\lambda_{1,n}|^2-1)-\frac{\chi}{|\tr(\alpha^{k_{i,n}}_{1,n}\alpha_{i,n}\alpha^{l_{i,n}}_{1,n})|}.
\end{eqnarray*}
Set $\epsilon_{n}=\min_{2\le i\le k}\left(\delta(|\lambda_{1,n}|^2-1)-\frac{\chi}{\left|\tr(\alpha^{k_{i,n}}_{1,n}\alpha_{i,n}\alpha^{l_{i,n}}_{1,n})\right|}\right).$
$z_{u,n}=\max_{2\le i\le k}\left|z_{ \alpha^{k_{i,n}}_{1,n}\alpha_{i,n}\alpha^{l_{i,n}}_{1,n},u }\right|.$
Define Mobius transformations by 
$\psi_{n}(x)=\left(1+\frac{\epsilon_{n}}{2z_{u,n}}\right)\lambda^{-1}_{1,n}(x).$ Then,
\begin{eqnarray*}
|\lambda_{i,n}|-|z_{\psi_n\alpha^{k_{i,n}}_{1,n}\alpha_{i,n}\alpha^{l_{i,n}}_{1,n}\psi^{-1}_n,u}|&=&|\lambda_{i,n}|-
(1+\frac{\epsilon_n}{2z_{u,n}})|\lambda_{1,n}|^{-1}
|z_{ \alpha^{k_{i,n}}_{1,n}\alpha_{i,n}\alpha^{l_{i,n}}_{1,n},u } |\\
&\ge&(|\lambda_{1,n}|^2-|z_{ \alpha^{k_{i,n}}_{1,n}\alpha_{i,n}\alpha^{l_{i,n}}_{1,n},u} |-\frac{\epsilon_n}{2}|)|\lambda_{1,n}|^{-1}\\
&>&(\epsilon_n-\frac{\epsilon_n}{2})|\lambda_{1,n}|^{-1}=\frac{\epsilon_n}{2}|\lambda_{1,n}|^{-1}.
\end{eqnarray*}
Also by Lemma \ref{fix-trace} we have, $||z_{\alpha^{k_{i,n}}_{1,n}\alpha_{i,n}\alpha^{l_{i,n}}_{1,n},l}|-|\eta_{ \alpha^{k_{i,n}}_{1,n}\alpha_{i,n}\alpha^{l_{i,n}}_{1,n} }||
<\frac{\chi'}{|\tr(\alpha_{i,n})|}.$ 
Since $|\eta_{\alpha^{k_{i,n}}_{1,n}\alpha_{i,n}\alpha^{l_{i,n}}_{1,n}}|\ge 1$ for all $2\le i\le k$ this implies,
\begin{eqnarray*}
|z_{\psi\alpha^{k_{i,n}}_{1,n}\alpha_{i,n}\alpha^{l_{i,n}}_{1,n}\psi^{-1},l}|-|\lambda_{1,n}|^{-1}&=&
(1+\frac{\epsilon_n}{2z_{u,n}})
|z_{\alpha^{k_{i,n}}_{1,n}\alpha_{i,n}\alpha^{l_{i,n}}_{1,n},l}||\lambda_{1,n}|^{-1}-|\lambda_{1,n}|^{-1}\\
&=&|\lambda_{1,n}|^{-1}\left(|z_{\alpha^{k_{i,n}}_{1,n}\alpha_{i,n}\alpha^{l_{i,n}}_{1,n},l}|-1+
\frac{\epsilon_n|z_{\alpha^{k_{i,n}}_{1,n}\alpha_{i,n}\alpha^{l_{i,n}}_{1,n},l}|}{2z_{u,n}}\right)\\
&>&|\lambda_{1,n}|^{-1}\left(\frac{\epsilon_n|z_{\alpha^{k_{i,n}}_{1,n}\alpha_{i,n}\alpha^{l_{i,n}}_{1,n},l}|}{2z_{u,n}}-\frac{\chi'}
{|\tr(\alpha_{i,n})|}\right)\\
&>&\frac{1}{|\lambda_{1,n}||\tr(\alpha_{i,n})|}\left(\frac{|\tr(\alpha_{i,n})|\epsilon_n|z_{\alpha^{k_{i,n}}_{1,n}\alpha_{i,n}\alpha^{l_{i,n}}_{1,n},l}|}
{2z_{u,n}}-\chi'\right) 
\end{eqnarray*}
\[
>\frac{1}{|\lambda_{1,n}||\tr(\alpha_{i,n})|}\left(\frac{\delta|\tr(\alpha_{i,n})|(|\lambda_{1,n}|^2-1)|
z_{\alpha^{k_{i,n}}_{1,n}\alpha_{i,n}\alpha^{l_{i,n}}_{1,n},l}|-\chi|z_{\alpha^{k_{i,n}}_{1,n}\alpha_{i,n}\alpha^{l_{i,n}}_{1,n},l}|}
{2z_{u,n}}-\chi'\right)
\]
By Lemma \ref{iso-ratio} and above inequality we have, $|z_{\psi_n z_{\alpha^{k_{i,n}}_{1,n}\alpha_{i,n}\alpha^{l_{i,n}}_{1,n},l}\psi^{-1}_n,l}|-|\lambda_{1,n}|^{-1}>0$ 
and $|\tr(\alpha^{k_{i,n}}_{1,n}\alpha_{i,n}\alpha^{l_{i,n}}_{1,n})|(|z_{\alpha^{k_{i,n}}_{1,n}\alpha_{i,n}\alpha^{l_{i,n}}_{1,n},l}|-|\lambda_{1,n}|^{-1})\to\infty$ and 
$|\tr(\alpha^{k_{i,n}}_{1,n}\alpha_{i,n}\alpha^{l_{i,n}}_{1,n})|(|\lambda_{1,n}|-|z_{\psi_n\alpha^{k_{i,n}}_{1,n}\alpha_{i,n}\alpha^{l_{i,n}}_{1,n},u}|)\to\infty.$ Hence,
\begin{itemize}
\item
$|\lambda_{1,n}|^{-1}<|z_{\psi_n\alpha^{k_{i,n}}_{1,n}\alpha_{i,n}\alpha^{l_{i,n}}_{1,n}\psi^{-1}_n,l}|
\le|z_{\psi_n\alpha^{k_{i,n}}_{1,n}\alpha_{i,n}\alpha^{l_{i,n}}_{1,n}\psi^{-1}_n,u}|<|\lambda_{1,n}|,$ and
\item 
\[0=\begin{cases}
\lim_n\frac{1}{\left(\left|z_{\psi_n\alpha^{k_{i,n}}_{1,n}\alpha_{i,n}\alpha^{l_{i,n}}_{1,n}\psi_n^{-1},u}\right|-\left|\lambda_{1,n}\right|\right)
\left|\tr\left(\alpha^{k_{i,n}}_{1,n}\alpha_{i,n}\alpha^{l_{i,n}}_{1,n}\right)\right|}\\
\lim_n\frac{1}{\left(\left|z_{\psi_n\alpha^{k_{i,n}}_{1,n}\alpha_{i,n}\alpha^{l_{i,n}}_{1,n}\psi_n^{-1},l}\right|-\left|\lambda_{1,n}\right|^{-1}\right)
\left|\tr\left(\alpha^{k_{i,n}}_{1,n}\alpha_{i,n}\alpha^{l_{i,n}}_{1,n}\right)\right|}.
\end{cases}\]
\end{itemize}
Also by $\kappa({S_{\G_n}})\to\infty,$ implies that
\begin{multline*}
\left|z_{\psi_{n;1}\alpha^{k_{i,n}}_{1,n}\alpha_{i,n}\alpha^{l_{i,n}}_{1,n}\psi^{-1}_{n;1} ,\pm}-
z_{\psi_{n;1}\alpha^{k_{j,n}}_{1,n}\alpha_{j,n}\alpha^{l_{j,n}}_{1,n}\psi^{-1}_{n;1},\pm}\right|\\
=\left|\left(1+\frac{\epsilon_{n}}{2z_{u,n}}\right)\lambda^{-1}_{1,n}\right|\left|z_{\alpha^{k_{i,n}}_{1,n}\alpha_{i,n}\alpha^{l_{i,n}}_{1,n},\pm}-
z_{\alpha^{k_{j,n}}_{1,n}\alpha_{j,n}\alpha^{l_{j,n}}_{1,n},\pm}\right|\to\infty,
\end{multline*}
for $i\not=j, 2\le i,j\le k$ and $\left|\tr\left( \alpha^{k_{i,n}}_{1,n}\alpha_{i,n}\alpha^{l_{i,n}}_{1,n}\right)\right|
\le\left|\tr\left(\alpha^{k_{j,n}}_{1,n}\alpha_{j,n}\alpha^{l_{j,n}}_{1,n}\right)\right|.$ \par
Hence the generators 
$<\psi_n\alpha_{1,n}\psi^{-1}_n,\psi_n\alpha^{k_{2,n}}_{1,n}\alpha_{2,n}\alpha^{l_{2,n}}_{1,n}\psi^{-1}_n,...,
\psi_n\alpha^{k_{k,n}}_{1,n}\alpha_{k,n}\alpha^{l_{k,n}}_{1,n}\psi^{-1}_n >$ satisfies conditions of Lemma \ref{classical} for large $n.$
For future reference we set the Mobius transformations defined earlier as $\psi_{n;1}(x).$\par
Next we present the proof for $(i_2).$ This case requires a bit more computation. This is due to the fact that we need to address the possibility that collapsing of
fixed points imply that the distance between axises of $\beta_{i,n}$ and $\alpha_{1,n}$ will go to $\infty.$ So we can't conclude that $|\tr(\beta_{i,n})|\to\infty.$ Hence 
we will use different set of circles given by Proposition $5.5$ to compare rates.
\subsubsection{ Case $(i_2)$.}
There exists $|\rho_{\alpha^{k_{i,n}-1}_{1,n}\alpha_{i,n}\alpha^{l_{i,n}}_{1,n} }|\to 1$ such that 
$\zeta_{\alpha^{k_{i,n}-1}_{1,n}\alpha_{i,n}\alpha^{l_{i,n}}_{1,n} }\rho_{\alpha^{k_{i,n}-1}_{1,n}\alpha_{i,n}\alpha^{l_{i,n}}_{1,n}}
=\eta_{\alpha^{k_n-1}_n\beta_n}$.
If $\limsup_n|\rho_{\alpha^{k_{i,n}-1}_{1,n}\alpha_{i,n}\alpha^{l_{i,n}}_{1,n} }-1|>0$, 
then (with the same index notation for subsequence there exists a subsequence)
such that 
\[\liminf_n|\zeta_{\alpha^{k_{i,n}-1}_{1,n}\alpha_{i,n}\alpha^{l_{i,n}}_{1,n} }-\eta_{\alpha^{k_{i,n}}_{1,n}\alpha_{i,n}\alpha^{l_{i,n}}_{1,n} }|>0.\]
This implies by Lemma \ref{fix-trace}, 
$\liminf_n|z_{\alpha^{k_{i,n}-1}_{1,n}\alpha_{i,n}\alpha^{l_{i,n}}_{1,n} ,+}-z_{\alpha^{k_{i,n}}_{1,n}\alpha_{i,n}\alpha^{l_{i,n}}_{1,n} ,-}|>0.$ 
Hence by Proposition \ref{trace}, there exists $\rho>0$, such that for large $n$,
\[|\tr(\alpha^{k_{i,n}-1}_{1,n}\alpha_{i,n}\alpha^{l_{i,n}}_{1,n} )|\ge\rho\left(\frac{|\lambda_{1,n}|^{2\mathfrak{D}_n}+3}{|\lambda_{1,n}|^{2\mathfrak{D}_n}-1}\right)^{\frac{1}{2\mathfrak{D}_n}}.\] In particular,
we have $|\tr(\alpha^{k_{i,n}-1}_{1,n}\alpha_{i,n}\alpha^{l_{i,n}}_{1,n} )|\to\infty.$ Note 
\[\lim_n\frac{|\rho_{\alpha^{k_{i,n}-1}_{1,n}\alpha_{i,n}\alpha^{l_{i,n}}_{1,n}}|-1}{|\lambda_{1,n}|^2-1}=0.\]
This can be seen as follows: since $|\lambda_{1,n}|^2-|\zeta_{\alpha^{k_{i,n}}_{1,n}\alpha_{i,n}\alpha^{l_{i,n}}_{1,n}}|\le|\lambda_{1,n}|^2-1$ we have either, 
$\frac{|\lambda_{1,n}|^2-|\zeta_{\alpha^{k_{i,n}}_{1,n}\alpha_{i,n}\alpha^{l_{i,n}}_{1,n}}|}{|\lambda_{1,n}|^2-1}\to 0$ or 
$1\ge\frac{|\lambda_{1,n}|^2-|\zeta_{\alpha^{k_{i,n}}_{1,n}\alpha_{i,n}\alpha^{l_{i,n}}_{1,n}}|}{|\lambda_{1,n}|^2-1}>\epsilon>0.$\par
Assume that the latter inequality holds. This is equivalent to $(i_1)$ and we follows the same idea used in $(i_1).$
Set Mobius transformations $\psi_{n;2}(x)=\lambda_{1,n}^{-1}(1-\epsilon_n)^{-1}x,$ with $\epsilon_n=\frac{\epsilon(|\lambda_{1,n}|^2-1)}{2|\lambda_{1,n}|^2}.$ Then,
\begin{eqnarray*}
|\lambda_{1,n}|-|\zeta_{\psi_{n;2}\alpha^{k_{i,n}}_{1,n}\alpha_{i,n}\alpha^{l_{i,n}}_{1,n}\psi^{-1}_{n;2}}|&=&|\lambda_{1,n}|-|\lambda_{1,n}^{-1}
(1-\epsilon_n)^{-1}\zeta_{\alpha^{k_{i,n}}_{1,n}\alpha_{i,n}\alpha^{l_{i,n}}_{1,n}}|\\
&=&|\lambda_{1,n}|^{-1}(1-\epsilon_n)^{-1}\left(|\lambda_{1,n}|^2(1-\epsilon_n)-|\zeta_{\alpha^{k_{i,n}}_{1,n}\alpha_{i,n}\alpha^{l_{i,n}}_{1,n}}|\right)
\end{eqnarray*}
Since $\quad |\lambda_{1,n}|^2-|\zeta_{\alpha^{k_{i,n}}_{1,n}\alpha_{i,n}\alpha^{l_{i,n}}_{1,n}}|>\epsilon(|\lambda_{1,n}|^2-1)$ we have,
\[|\lambda_{1,n}|-|\zeta_{\psi_{n;2}\alpha^{k_{i,n}}_{1,n}\alpha_{i,n}\alpha^{l_{i,n}}_{1,n}\psi^{-1}_{n;2}}|
> |\lambda_{1,n}|^{-1}(1-\epsilon_n)^{-1} (\epsilon(|\lambda_{1,n}|^2-1)-|\lambda_{1,n}|^2\epsilon_n)
=\frac{\epsilon(|\lambda_{1,n}|^2-1)}{2|\lambda_{1,n}|(1-\epsilon_n)}.\]
Since that $\epsilon_n\to 0,$ it follows from last inquality that for large $n$ we have,
$\frac{|\lambda_{1,n}|-|\zeta_{\psi_{n;2}\alpha^{k_{i,n}-1}_{1,n}\alpha_{i,n}\alpha^{l_{i,n}}_{1,n}\psi^{-1}_{n;2}}|}{|\lambda_{1,n}|^2-1}>\epsilon'>0.$
And $|\eta_{\psi_{n;2}\alpha^{k_{i,n}}_{1,n}\alpha_{i,n}\alpha^{l_{i,n}}_{1,n}\psi^{-1}_{n;2}}|\ge|\lambda_{1,n}|^{-1}(1-\epsilon_n)^{-1}$ we have, 
\[\frac{|\eta_{\psi_{n;2}\alpha^{k_{i,n}}_{1,n}\alpha_{i,n}\alpha^{l_{i,n}}_{1,n}\psi^{-1}_{n;2}}|-|\lambda_n|^{-1}}{|\lambda|^2-1}=\frac{\epsilon}{2|\lambda_n|^3(1-\epsilon_n)}>\epsilon''>0.\]
Since $|\tr(\alpha^{k_{i,n}}_{1,n}\alpha_{i,n}\alpha^{l_{i,n}}_{1,n})(|\lambda_{1,n}|^2-1)|\to\infty$, it follows that for large $n$,
\begin{itemize}
\item
$|\lambda_{1,n}|^{-1}<|\eta_{\psi_{n;2}\alpha^{k_{i,n}}_{1,n}\alpha_{i,n}\alpha^{l_{i,n}}_{1,n}\psi^{-1}_{n;2}}|
\le|\zeta_{\psi_{n;2}\alpha^{k_{i,n}}_{1,n}\alpha_{i,n}\alpha^{l_{i,n}}_{1,n}\psi^{-1}_{n;2}}|<|\lambda_{1,n}| ,$ and
\item 
\[
0=\lim_n\begin{cases}
\frac{1}{\left|\tr\left(\alpha^{k_{i,n}}_{1,n}\alpha_{i,n}\alpha^{l_{i,n}}_{1,n}\right)\right|
\left(\left|\eta_{\psi_{n;2}\alpha^{k_{i,n}}_{1,n}\alpha_{i,n}\alpha^{l_{i,n}}_{1,n}\psi_{n;2}^{-1}}\right|-\left|\lambda_{1,n}\right|^{-1}\right)}\\
\frac{1}{\left|\tr\left(\alpha^{k_{i,n}}_{1,n}\alpha_{i,n}\alpha^{l_{i,n}}_{1,n}\right)\right|\left(\left|\zeta_{\psi_{n;2}\alpha^{k_{i,n}}_{1,n}
\alpha_{i,n}\alpha^{l_{i,n}}_{1,n}\psi_{n;2}^{-1}}\right|-\left|\lambda_{1,n}\right|\right)}
\end{cases}\]
\end{itemize}
and since $\kappa(S_{\G_n})\to\infty,$ implies that
\begin{multline*}
\left|z_{\psi_{n;2} \alpha^{k_{i,n}}_{1,n}\alpha_{i,n}\alpha^{l_{i,n}}_{1,n}\psi_{n;2}^{-1} ,\pm}-
z_{\psi_{n;2}\alpha^{k_{j,n}}_{1,n}\alpha_{j,n}\alpha^{l_{j,n}}_{1,n}\psi_{n;2}^{-1},\pm}\right|\\
=\left|\lambda_{1,n}^{-1}(1-\epsilon_n)^{-1}\right|\left|z_{\alpha^{k_{i,n}}_{1,n}\alpha_{i,n}\alpha^{l_{i,n}}_{1,n},\pm}-
z_{\alpha^{k_{j,n}}_{1,n}\alpha_{j,n}\alpha^{l_{j,n}}_{1,n},\pm}\right|\to\infty,
\end{multline*}
for $i\not=j, 2\le i,j\le k$ and $\left|\tr\left( \alpha^{k_{i,n}}_{1,n}\alpha_{i,n}\alpha^{l_{i,n}}_{1,n}\right)\right|
\le\left|\tr\left(\alpha^{k_{j,n}}_{1,n}\alpha_{j,n}\alpha^{l_{j,n}}_{1,n}\right)\right|.$ \par
Hence by Lemma \ref{fix-trace}, $<\psi_{n;2}\alpha_{1,n}\psi_{n;2}^{-1},...,\psi_{n;2}\alpha^{k_{k,n}}_{1,n}\alpha_{k,n}\alpha^{l_{k,n}}_{1,n}\psi_{n;2}^{-1}>$ satisfies Lemma \ref{classical}.\par
If the former holds then 
\begin{multline*}
|\lambda_{1,n}|^2(|\eta_{\alpha^{k_{i,n}}_{1,n}\alpha_{i,n}\alpha^{l_{i,n}}_{1,n}}|-|\zeta_{\alpha^{k_{i,n}}_{1,n}\alpha_{i,n}\alpha^{l_{i,n}}_{1,n}}\lambda_{1,n}^{-2}|)
=|\lambda_{1,n}|^2(|\eta_{\alpha^{k_{i,n}}_{1,n}\alpha_{i,n}\alpha^{l_{i,n}}_{1,n} }|-|\zeta_{\alpha^{k_{i,n}-1}_{1,n}\alpha_{i,n}\alpha^{l_{i,n}}_{1,n} }|)\\
=|\lambda_{1,n}|^2(|\zeta_{\alpha^{k_{i,n}-1}_{1,n}\alpha_{i,n}\alpha^{l_{i,n}}_{1,n} }\rho_{\alpha^{k_{i,n}-1}_{1,n}\alpha_{i,n}\alpha^{l_{i,n}}_{1,n} }| 
-|\zeta_{\alpha^{k_{i,n}-1}_{1,n}\alpha_{i,n}\alpha^{l_{i,n}}_{1,n}}|)\\
=|\lambda_{1,n}|^2|\zeta_{\alpha^{k_{i,n}-1}_{1,n}\alpha_{i,n}\alpha^{l_{i,n}}_{1,n}}|(|\rho_{\alpha^{k_{i,n}-1}_{1,n}\alpha_{i,n}\alpha^{l_{i,n}}_{1,n} }|-1)
\end{multline*}
The last equation implies that $\lim_n\frac{|\rho_{\alpha^{k_{i,n}-1}_{1,n}\alpha_{i,n}\alpha^{l_{i,n}}_{1,n} }|-1}{|\lambda_{1,n}|^2-1}=0,$ 
and $|\tr( \alpha^{k_{i,n}-1}_{1,n}\alpha_{i,n}\alpha^{l_{i,n}}_{1,n} )(|\lambda_{1,n}|^2-1)|\to\infty$,
it follows that for large $n$,
\begin{itemize}
\item
$|\lambda_{1,n}|^{-1}<|\zeta_{\alpha^{k_{i,n}-1}_{1,n}\alpha_{i,n}\alpha^{l_{i,n}}_{1,n} }|
\le|\eta_{\alpha^{k_{i,n}-1}_{1,n}\alpha_{i,n}\alpha^{l_{i,n}}_{1,n} }|<|\lambda_{1,n}|,$ and
\item 
\[0=\lim_n\begin{cases}
\frac{1}{\left|\tr\left( \alpha^{k_{i,n}-1}_{1,n}\alpha_{i,n}\alpha^{l_{i,n}}_{1,n} \right)\right|
\left(\left|\zeta_{\alpha^{k_{i,n}-1}_{1,n}\alpha_{i,n}\alpha^{l_{i,n}}_{1,n}}\right|-\left|\lambda_{1,n}\right|^{-1}\right)}\\
\frac{1}{\left|\tr\left( \alpha^{k_{i,n}-1}_{1,n}\alpha_{i,n}\alpha^{l_{i,n}}_{1,n} \right)\right|\left(\left|\eta_{\alpha^{k_{i,n}-1}_{1,n}
\alpha_{i,n}\alpha^{l_{i,n}}_{1,n}}\right|-\left|\lambda_{1,n}\right|\right)}
\end{cases}\]
\item
\[\left|z_{ \alpha^{k_{i,n}-1}_{1,n}\alpha_{i,n}\alpha^{l_{i,n}}_{1,n} ,\pm}-z_{\alpha^{k_{j,n}-1}_{1,n}\alpha_{j,n}\alpha^{l_{j,n}}_{1,n},\pm}\right|
\left|\tr\left(\alpha^{k_{i,n}-1}_{1,n}\alpha_{i,n}\alpha^{l_{i,n}}_{1,n}\right)\right|\to\infty;\quad i\not=j\]
\end{itemize} 
Hence by Lemma \ref{fix-trace}, $<\alpha_{1,n},...,\alpha^{k_{k,n}-1}_{1,n}\alpha_{k,n}\alpha^{l_{k,n}}_{1,n}>$ satisfies Lemma \ref{classical}.\par
\noindent Consider the case that, $\rho_{\alpha^{k_{i,n}-1}_{1,n}\alpha_{i,n}\alpha^{l_{i,n}}_{1,n}}\to 1$.\par
We first consider the growth conditions that relates between 
$\alpha_{1,n},\alpha^{k_{i,n}-1}_{1,n}\alpha_{i,n}\alpha^{l_{i,n}}_{1,n}$ and $\alpha^{k_{i,n}}_{1,n}\alpha_{i,n}\alpha^{l_{i,n}}_{1,n}.$ To do so we do some two generators
estimates. Fix a $i$ that satisfies $\rho_{\alpha^{k_{i,n}-1}_{1,n}\alpha_{i,n}\alpha^{l_{i,n}}_{1,n}}\to 1$. We first normalize so that $\eta_{\alpha^{k_{i,n}}_{1,n}\alpha_{i,n}\alpha^{l_{i,n}}_{1,n}}=1.$
Since our estimates will be in terms of traces so for simplicity we can take $l_{i,n}=0$ and set 
$\eta_{\alpha^{k_{i,n}-1}_{1,n}\alpha_{i,n}\alpha^{l_{i,n}}_{1,n}}=1.$
by conjugation with a desired Mobius transformation. \par
Let $f_{i,n}$ be the Mobius transformations given by,
\[f_{i,n}(x)=\frac{1}{\sigma_{i,n}}\frac{z_{ \alpha^{k_{i,n}-1}_{1,n}\alpha_{i,n},-}x-z_{\alpha^{k_{i,n}-1}_{1,n}\alpha_{i,n},-}z_{\alpha^{k_{i,n}-1}_{1,n}\alpha_{i,n},+}}
{x-z_{ \alpha^{k_{i,n}-1}_{1,n}\alpha_{i,n} ,-}}, \quad x\in\mathbb{C}.\]
where $\sigma_{i,n}=\frac{1}{z_{\alpha^{k_{i,n}-1}_{1,n}\alpha_{i,n},-}z_{\alpha^{k_{i,n}-1}_{1,n}\alpha_{i,n},+}-z^2_{\alpha^{k_{i,n}-1}_{1,n}\alpha_{i,n},-}}.$
Let $\phi_{i,n}$ be the Mobius transformations defined by,
$\phi_{i,n}(x)=\eta^{-1}_{f_{i,n}\alpha^{k_{i,n}}_{1,n}\alpha_{i,n} f^{-1}_{i,n}}x.$ Then
$\zeta_{ \phi_{i,n} f_{i,n} \alpha^{k_{i,n}}_{1,n}\alpha_{i,n}f^{-1}_{i,n} \phi^{-1}_{i,n} }$ is given by,
\begin{eqnarray*}
\zeta_{ \phi_{i,n} f_{i,n}\alpha^{k_{i,n}}_{1,n}\alpha_{i,n}f^{-1}_{i,n}\phi^{-1}_{i,n} }=
\left(\frac{1-z_{\alpha^{k_{i,n}-1}_{1,n}\alpha_{i,n},-}}{1-z_{\alpha^{k_{i,n}-1}_{1,n}\alpha_{i,n},+}}\right)
\left(\frac{\alpha^{k_{i,n}}_{1,n}\alpha_{i,n}(z_{\alpha^{k_{i,n}-1}_{1,n}\alpha_{i,n},-})-z_{\alpha^{k_{i,n}-1}_{1,n}\alpha_{i,n},+}}
{\alpha^{k_{i,n}}_{1,n}\alpha_{i,n}(z_{\alpha^{k_{i,n}-1}_{1,n}\alpha_{i,n},+})-z_{\alpha^{k_{i,n}-1}_{1,n}\alpha_{i,n},-}}\right)
\end{eqnarray*}
To see this we do a simple computation. Set $\tilde{\alpha}_{i,n}=\phi_n\psi_n\alpha^{k_{i,n}-1}_{1,n}\alpha_{i,n}\alpha^{l_{i,n}}_{1,n}\psi^{-1}_n\phi^{-1}_n$ and 
$\tilde{\beta}_{i,n}=\phi_n\psi_n\alpha^{k_{i,n}}_{1,n}\alpha_{i,n}\alpha^{l_{i,n}}_{1,n}\psi^{-1}_n\phi^{-1}_n.$ 
Writing in matrix $\tilde{\beta}_{i,n}=\bigl(\begin{smallmatrix} \tilde{a}_n & \tilde{b}_n\\ \tilde{c_n} & \tilde{d}_n\end{smallmatrix}\bigr).$
Note that by our choice of $\phi_n$, we have $\eta_{\tilde{\beta}_{i,n}}=1,$ so $\tilde{c}_n=-\tilde{d}_n$ and $\zeta_{\tilde{\beta}_{i,n}}=\frac{\tilde{a}_n}{-\tilde{d}_n}.$
By straight forward matrix multiplications we have,
\begin{eqnarray*}
\tilde{a}_n=
-z^2_{\alpha^{k_n-1}_n\beta_n,-}\lambda_n^{k_n}a_n&-&z_{\alpha^{k_n-1}_n\beta_n,-}\lambda_n^{k_n}b_n+\\
&&z_{\alpha^{k_n-1}_n\beta_n,-}z_{\alpha^{k_n-1}_n\beta_n,+}
(z_{\alpha^{k_n-1}_n\beta_n,-}\lambda^{-k_n}_nc_n+\lambda_n^{-k_n}d_n)
\end{eqnarray*}
\begin{eqnarray*}
\tilde{a}_n&=&\frac{
\left(\frac{-z_{\alpha^{k_n-1}_n\beta_n,-}(\lambda_n^{k_n}a_nz_{\alpha^{k_n-1}_n\beta_n,-}+\lambda^{k_n}_nb_n)}
{z_{\alpha^{k_n-1}_n\beta_n,-}\lambda^{-k_n}_nc_n+\lambda^{-k_n}_nd_n}+z_{\alpha^{k_n-1}_n\beta_n,-}z_{\alpha^{k_n-1}_n\beta_n,+}\right)}
{(z_{\alpha^{k_n-1}_n\beta_n,-}\lambda^{-k_n}_nc_n+\lambda^{-k_n}_nd_n)^{-1}}\\
&=&\frac{-z_{\alpha^{k_n-1}_n\beta_n,-}(\alpha^{k_n}_n\beta_n(z_{\alpha^{k_n-1}_n\beta_n,-})-z_{\alpha^{k_n-1}_n\beta_n,+})}
{(z_{\alpha^{k_n-1}_n\beta_n,-}\lambda_n^{-k_n}c_n+\lambda^{-k_n}_nd_n)^{-1}}
\end{eqnarray*}
\begin{eqnarray*}
\tilde{d}_n=z_{\alpha^{k_n-1}_n\beta_n,-}z_{\alpha^{k_n-1}_n\beta_n,+}\lambda^{k_n}_na_n&+&z_{\alpha^{k_n-1}_n\beta_n,-}\lambda^{k_n}_nb_n-\\
&&z^2_{\alpha^{k_n-1}_n\beta_n,-}(z_{\alpha^{k_n-1}_n\beta_n,+}\lambda_n^{k_n}c_n\lambda_n^{-k_n}d_n)
\end{eqnarray*}
\begin{eqnarray*}
\tilde{d}_n&=&\frac{
\left(\frac{z_{\alpha^{k_n-1}_n\beta_n,-}(\lambda_n^{k_n}a_nz_{\alpha^{k_n-1}_n\beta_n,+}+\lambda^{k_n}_nb_n)}
{z_{\alpha^{k_n-1}_n\beta_n,+}\lambda^{-k_n}_nc_n+\lambda^{-k_n}_nd_n}-z^2_{\alpha^{k_n-1}_n\beta_n,-}\right)}
{(z_{\alpha^{k_n-1}_n\beta_n,+}\lambda^{-k_n}_nc_n+\lambda^{-k_n}_nd_n)^{-1}}\\
&=&\frac{z_{\alpha^{k_n-1}_n\beta_n,-}(\alpha^{k_n}_n\beta_n(z_{\alpha^{k_n-1}_n\beta_n,+})-z_{\alpha^{k_n-1}_n\beta_n,-})}
{(z_{\alpha^{k_n-1}_n\beta_n,+}\lambda_n^{-k_n}c_n+\lambda^{-k_n}_nd_n)^{-1}}
\end{eqnarray*}
Now by $\eta_n=1$ we have,
\[\frac{ z_{\alpha^{k_n-1}_n\beta_n,+} \lambda_n^{-k_n}c_n+\lambda^{-k_n}_nd_n}{z_{\alpha^{k_n-1}_n\beta_n,-}\lambda_n^{-k_n}c_n+\lambda^{-k_n}_nd_n}=
\frac{1-z_{\alpha^{k_n-1}_n\beta_n,+}}{1-z_{\alpha^{k_n-1}_n\beta_n,-}}.\]
Since $\zeta_{\tilde{\beta}_{i,n}} =\frac{\tilde{a}_n}{-\tilde{d}_n},$ the formula for $\zeta_{\tilde{\beta}_{i,n}}$ follows from above equations for
$\tilde{a}_n$ and $\tilde{d}_n.$\par
Let $\tilde{\lambda}_n$, denote the multiplier of $\tilde{\alpha}_{1,n}.$ \par
First, we need to get a estimate of the growth of $|\tr(\tilde{\beta}_{i,n})|$ in terms of $|\tr(\beta_{i,n})|.$ Note that 
$|\tr(\tilde{\beta}_{i,n})|=|\tr( \alpha^{k_{i,n}}_{1,n}\alpha_{i,n}\alpha^{l_{i,n}}_{1,n} )|.$  
\begin{rema}\label{tilde-trace}
There exists $\sigma,\sigma'>0$ such that 
\[\sigma'|\tr(\alpha^{k_{i,n}}_{1,n}\alpha_{i,n}\alpha^{l_{i,n}}_{1,n})|>|\tr(\tilde{\beta}_{i,n})|>\sigma|\tr(\alpha^{k_{i,n}}_{1,n}\alpha_{i,n}\alpha^{l_{i,n}}_{1,n})|(|\lambda_{1,n}|^2-1)\quad\quad\text{for $n$ large}.\]
\end{rema}
In fact we only need the lower bound for $|\tr(\tilde{\beta}_{i,n})|$.
\begin{proof} 
Since, 
\[ \frac{|\lambda_{1,n}|^2-1}{|\zeta_{\alpha^{k_{i,n}}_{1,n}\alpha_{i,n}\alpha^{l_{i,n}}_{1,n}}-1|}+ \frac{|\zeta_{\alpha^{k_{i,n}}_{1,n}\alpha_{i,n}\alpha^{l_{i,n}}_{1,n}}|-
|\lambda_{1,n}|^2}{|\zeta_{\alpha^{k_{i,n}}_{1,n}\alpha_{i,n}\alpha^{l_{i,n}}_{1,n}}-1|}
=\frac{|\zeta_{\alpha^{k_{i,n}}_{1,n}\alpha_{i,n}\alpha^{l_{i,n}}_{1,n}}|-1}{|\zeta_{\alpha^{k_{i,n}}_{1,n}\alpha_{i,n}\alpha^{l_{i,n}}_{1,n}}-1|}\le1,\]
and by condition $(i_2),$ $\frac{|\zeta_{\alpha^{k_{i,n}}_{1,n}\alpha_{i,n}\alpha^{l_{i,n}}_{1,n}}|-|\lambda_{1,n}|^2}{|\zeta_{\alpha^{k_{i,n}}_{1,n}\alpha_{i,n}\alpha^{l_{i,n}}_{1,n}}-1|}\to 0$ we have,
\[ \frac{|\zeta_{\alpha^{k_{i,n}}_{1,n}\alpha_{i,n}\alpha^{l_{i,n}}_{1,n}}-1|}{|\lambda_{1,n}|^2-1}>\epsilon>0,\quad\quad\text{for large $n$}.\]
Recall $\eta_{\alpha^{k_{i,n}}_{1,n}\alpha_{i,n}\alpha^{l_{i,n}}_{1,n}}=1$, and by Lemma \ref{fix-bound-2}, more precisely by equation $(1)$ in the proof of Lemma \ref{fix-bound-2} and 
$|\tr(\alpha_{i,n})|\asymp |c_{i,n}|,$
\[\frac{|\tr(\alpha^{k_{i,n}}_{1,n}\alpha_{i,n}\alpha^{l_{i,n}}_{1,n})|}{|\tr(\alpha_{i,n})|}>\sigma' |\zeta_{\alpha^{k_{i,n}}_{1,n}\alpha_{i,n}\alpha^{l_{i,n}}_{1,n}}-
\eta_{\alpha^{k_{i,n}}_{1,n}\alpha_{i,n}\alpha^{l_{i,n}}_{1,n}}|
>\sigma'\epsilon(|\lambda_{1,n}|^2-1), \quad\text{for large $n$}.\]
The upper bound is trivial.
\end{proof}
We define a fuction of generators $\alpha_{i,n},\alpha_{1,n}$ as $f(\alpha_{i,n},\alpha_{1,n})=\frac{|\zeta_{\tilde{\alpha}_{i,n}}-1|}{|\tilde{\lambda}_{1,n}|^2-1}.$ \par
In next Lemma, we will prove $(i_2)$ by: if $\lim_jf(\alpha_{i,n_j},\alpha_{1,n_j})=0$ then we have classical Schottky generators. In Lemma $7.6$, we will prove $(i_2)$ for the case $f(\alpha_{i,n_j},\alpha_{1,n_j})>M$ for some $M>0.$ Hence we will be done for $(i_2).$
\begin{lem}\label{m=0}
Assume that there exists a subsequence such that,
\[\lim_jf(\alpha_{i,n_j},\alpha_{1,n_j})=0.\]
Then $S_j=<\alpha^{k_{i,n_j}-1}_{1,n_j}\alpha_{i,n_j}\alpha^{l_{i,n_j}}_{1,n_j},...,
\alpha^{k_{i,n_j}}_{1,n_j}\alpha_{i,n_j}\alpha^{l_{i,n_j}}_{1,n_j},..., 
\alpha^{k_{k,n_j}}_{1,n_j}\alpha_{k,n_j}\alpha^{l_{k,n_j}}_{1,n_j}>$ 
classical Schottky generators for large $n.$ 
i.e with generator $\alpha_{1,n_j}$ replaced by $\alpha^{k_{i,n_j}-1}_{1,n_j}\alpha_{i,n_j}\alpha^{l_{i,n_j}}_{1,n_j}$ and rest generators remains the same.
\end{lem}
\begin{proof}
We will show that $S_j$ satisfies conditions of Lemma \ref{classical} with Remark \ref{classical-rem}.\par
Since by Remark 6.A, $\sigma'|\tr(\alpha_{i,n})|>|\tr(\tilde{\beta}_{i,n_j})|>\sigma|\tr(\alpha_{i,n_j})|(|\lambda_{1,n_j}|^2-1).$ In particular $|\tr(\tilde{\beta}_{i,n_j})|\to\infty$, we have
\[\lim_j\frac{\left|z_{\tilde{\beta}_{i,n_j},+}-z_{\tilde{\beta}_{i,n_j},-}\right|}{\left|\zeta_{\tilde{\beta}_{i,n_j}}-1\right|}=
\lim_j\left|\frac{\sqrt{|\tr(\tilde{\beta}_{i,n_j})|^2-4}}{\tr(\tilde{\beta}_{i,n_j})}\right|=1\]
Since $|\tr(\tilde{\beta}_{i,n_j})|\to\infty$ implies that the isometric circles of $\tilde{\beta}_{i,n_j}$ are disjoint for large $n.$ Since
$\zeta_{\tilde{\beta}_{i,n_j}},\eta_{\tilde{\beta}_{i,n_j}}$ are centers of these isometric circles and so by disjointness, the radius of these
isometric circles must be
$<\frac{|\zeta_{\tilde{\beta}_{i,n}}-\eta_{\tilde{\beta}_{i,n_j}}|}{2}$ for large $j.$ In addition, each isometric circle contains one of the fixed point
$z_{\tilde{\beta}_{i,n_j},l}, z_{\tilde{\beta}_{i,n_j},u}.$ By our convention $z_{\tilde{\beta}_{i,n_j},l},z_{\tilde{\beta}_{i,n_j},u}$ are contained within
the isometric circles with centers $\zeta_{\tilde{\beta}_{i,n_j}},\eta_{\tilde{\beta}_{i,n_j}}$ respectively.
Note that $\eta_{\tilde{\beta}_{i,n_j}}=1.$ Hence for large $n_j$ we have,
\[\frac{|z_{\tilde{\beta}_{i,n_j},l}-\zeta_{\tilde{\beta}_{i,n_j}}|}{|\zeta_{\tilde{\beta}_{i,n_j}}-1|},
\frac{|z_{\tilde{\beta}_{i,n_j},u}-1|}{|\zeta_{\tilde{\beta}_{i,n_j}}-1|}<\frac{1}{2}.\]
From these bounds we have,
\begin{align*}
\lim_j\frac{|z_{\tilde{\beta}_{i,n_j},l}-z_{\tilde{\beta}_{i,n_j},u}|}{(|z_{\tilde{\beta}_{i,n_j},u}|-|\tilde{\lambda}_{i,n_j}|)|\tr(\tilde{\beta}_{i,n_j})|}
&=\lim_j\frac{\left|\zeta_{\tilde{\beta}_{i,n_j}}-1\right|}{\left|(|z_{\tilde{\beta}_{i,n_j},u}|-1)+(1-|\tilde{\lambda}_{i,n_j}|)\right|\left|\tr(\tilde{\beta}_{i,n_j})\right|}\\
&\le\lim_j\frac{1}{\left(\left|\frac{|\tilde{\lambda}_{i,n_j}|-1}{\zeta_{\tilde{\beta}_{i,n_j}}-1}\right|-\frac{1}{2}\right)|\tr(\tilde{\beta}_{i,n_j})|}=0
\end{align*}
Similarly we have,
\begin{equation*}
\begin{split}
\lim_j\frac{|z_{\tilde{\beta}_{i,n_j},l}-z_{\tilde{\beta}_{i,n_j},u}|}{(|z_{\tilde{\beta}_{i,n_j},l}|-|\tilde{\lambda}_{i,n_j}|^{-1})|\tr(\tilde{\beta}_{i,n_j})|}
\quad\quad\quad\quad\quad\quad\quad\quad\quad\quad\quad\quad\quad\quad\quad\quad\quad\quad\quad\text{        }\\
=\lim_j\frac{\left|\zeta_{\tilde{\beta}_{i,n_j}}-1\right|}{\left|(|z_{\tilde{\beta}_{i,n_j},u}|-|\zeta_{\tilde{\beta}_{i,n_j}}|)+
(|\zeta_{\tilde{\beta}_{i,n_j}}|-1)+(1-|\tilde{\lambda}_{i,n_j}|^{-1})\right|\left|\tr(\tilde{\beta}_{i,n_j})\right|}\quad\quad\quad\quad\quad\text{        }\\
\le\lim_j\frac{1}{\left(\frac{1}{|\tilde{\lambda}_{i,n_j}|}\left|\frac{|\tilde{\lambda}_{i,n_j}|-1}{\zeta_{\tilde{\beta}_{i,n_j}}-1}\right|
+\left|\frac{|\zeta_{\tilde{\beta}_{i,n_j}}|-1}{\zeta_{\tilde{\beta}_{i,n_j}}-1}\right| -\frac{1}{2}\right)|\tr(\tilde{\beta}_{i,n_j})|}=0
\quad\quad\quad\quad\quad\quad\quad\quad\quad\quad\text{        }
\end{split}
\end{equation*}
Hence by Remark \ref{classical-rem}, $\tilde{\alpha}_{n_j},\tilde{\beta}_{n_j}$ have disjoint circles for large $n_j$ which implies that
$\alpha_{i,n_j},\beta_{i,n_j}$ have disjoint cricles. By $\kappa(S_{\G_{n_j}})\to\infty$ we have 
\[|z_{\alpha^{k_{i,n_j}-1}_{1,n_j}\alpha_{i,n_j}\alpha^{l_{i,n_j}}_{1,n_j},\pm}-z_{\alpha^{k_{j,n_j}}_{1,n_j}\alpha_{j,n_j}\alpha^{l_{j,n_j}}_{1,n_j},\pm}|
|\tr(\alpha^{k_{i,n_j}}_{1,n_j}\alpha_{i,n_j}\alpha^{l_{i,n_j}}_{1,n_j})|\to\infty,\]
for $i\not=j.$ This implies $<\alpha^{k_{i,n_j}-1}_{1,n_j}\alpha_{i,n_j}\alpha^{l_{i,n_j}}_{1,n_j},...,\alpha^{k_{i,n_j}}_{1,n_j}\alpha_{i,n_j}\alpha^{l_{i,n_j}}_{1,n_j},...,
\alpha^{k_{k,n_j}}_{1,n_j}\alpha_{k,n_j}\alpha^{l_{k,n_j}}_{1,n_j}>$ is classical Schottky group for large $n_j.$
\end{proof}
We are done for the subcase $\inf_nf(\alpha_{i,n},\alpha_{1,n})=0.$ Next we will deal the remain subcase of $M<f(\alpha_{i,n},\alpha_{1,n})$ for some $M>0.$ First
we need to estimate the norm of the trace of Nielsen transformed generators $\tilde{\beta}_{i,n}$ with respect to the rate the generators $\tilde{\alpha}_{1,n}$ 
degenerates into elliptical elements, i.e. $|\tilde{\lambda}_{1,n}|$ decreases to 1. The required estimate is given by next Lemma.
\begin{lem}\label{axis-m=0}
Assume that there exists a $M>0$ such that,
\[M<f(\alpha_{i,n},\alpha_{1,n}).\]
Then there exists $\mathcal{N},\sigma>0$ such that, $|\tilde{\lambda}_{1,n}|^{2\mathfrak{D}_n}-1>\sigma|\tr(\tilde{\beta}_{i,n})|^{\frac{2\mathfrak{D}_n}{2\mathfrak{D}_n-1}},$
for $n>\mathcal{N}.$
\end{lem}
\begin{proof}
Use matrix representation we can write, 
$\tilde{\beta}_{i,n}=\bigl(\begin{smallmatrix} \tilde{a}_{i,n} & \tilde{b}_{i,n}\\ \tilde{c_{i,n}} & \tilde{d}_{i,n}\end{smallmatrix}\bigr).$
Note that $\eta_{\tilde{\beta}_{i,n}}=1.$ So $\tilde{c}_{i,n}=-\tilde{d}_{i,n}$ we have,
$\tilde{\beta}_{i,n}=\bigl(\begin{smallmatrix} \tilde{a}_{i,n} & \tilde{b}_{i,n}\\ -\tilde{d_{i,n}} & \tilde{d}_{i,n}\end{smallmatrix}\bigr).$
By $|\zeta_{\tilde{\beta}_{i,n}}|\le 1$ and our assumption of the Lemma 
$M<\frac{|\zeta_{\tilde{\beta}_{{i,n}}}-1|}{|\tilde{\lambda}_{{1,n}}|^2-1},$ 
we have, 
\[ M(|\tilde{\lambda}_{1,n}^2|-1)<\left|\zeta_{\tilde{\beta}_{i,n}}-1\right|<M'.\]
By $|\tr(\tilde{\beta}_{i,n})|\to\infty$ and as in the proof of Lemma \ref{m=0} we have,
\[\lim_n\frac{\left|z_{\tilde{\beta}_{{i,n}},+}-z_{\tilde{\beta}_{{i,n}},-}\right|}{\left|\zeta_{\tilde{\beta}_{{i,n}}}-1\right|}=
\lim_n\left|\frac{\sqrt{|\tr(\tilde{\beta}_{{i,n}})|^2-4}}{\tr(\tilde{\beta}_{{i,n}})}\right|=1\]
Since $|z_{\tilde{\beta}_{i,n},+}-z_{\tilde{\beta}_{i,n},-}|=\left|\tilde{d}_{i,n}\right|^{-1}\left|\sqrt{\tr^2{\tilde{\beta}_{i,n}}-4}\right|$
it follows that we have for some $\sigma, \sigma'>0$ and large $n$ that,
\[\sigma|\tr(\tilde{\beta}_{i,n})|<|\tilde{d}_{i,n}|<\sigma'|\tr(\tilde{\beta}_{i,n})|(|\tilde{\lambda}_{1,n}|^2-1)^{-1}.\]
Let $e$ be the Euler number. If there exists a subsequence such that $\lim_j|\zeta_{\tilde{\beta}_{{i,n_j}}}|=e^{-1},$ 
then $1-e^{-1}<|z_{\tilde{\beta}_{{i,n_j}},+}-z_{\tilde{\beta}_{{i,n_j}},-}|<1+e^{-1}$ for large $n_j.$ Otherwise we let $m_n>0$ be integers defined as,
\[ 1+\frac{1}{m_n+1}\le|\tilde{\lambda}_{1,n}|\le 1+\frac{1}{m_n}.\]
From this definition we have, $\lim_n|\tilde{\lambda}_{1,n}|^{m_n}=\lim_n(1+m_n^{-1})^{m_n}=e.$
Then there exists $N,\delta>0$ such that for $n>N$ we have,
\begin{eqnarray*}
|\zeta_{\tilde{\alpha}^{m_n}_{1,n}\tilde{\beta}_{i,n}}-\eta_{\tilde{\alpha}^{m_n}_{1,n}\tilde{\beta}_{i,n}}|&\le&
\left||\zeta_{\tilde{\beta}_{i,n}}||\tilde{\lambda}_{1,n}|^{m_n}+1\right|\\
&\le& \delta(e+1) \\
\text{and,}\\
|\zeta_{\tilde{\alpha}^{m_n}_{1,n}\tilde{\beta}_{i,n}}-\eta_{\tilde{\alpha}^{m_n}_{1,n}\tilde{\beta}_{i,n}}|&\ge&
\left||\zeta_{\tilde{\beta}_{i,n}}||\tilde{\lambda}_{1,n}|^{m_n}-1\right|\\
&\ge& \delta(e-1). 
\end{eqnarray*}
Hence it follows from Lemma \ref{fix-trace}, there exists $\kappa>0$ such that,
\[ \kappa^{-1}<|z_{\tilde{\alpha}^{m_n}_{1,n}\tilde{\beta}_{i,n},+}-z_{\tilde{\alpha}^{m_n}_{1,n}\tilde{\beta}_{i,n},-}|<\kappa.  \]  
Therefore by setting $m_n=0$ for the subsequence with $\lim_j|\zeta_{\tilde{\beta}_{{i,n_j}}}|=e^{-1},$ we can always assume that for large $n$
\[ \kappa^{-1}<|z_{\tilde{\alpha}^{m_n}_{1,n}\tilde{\beta}_{i,n},+}-z_{\tilde{\alpha}^{m_n}_{1,n}\tilde{\beta}_{i,n},-}|<\kappa.  \]  
Since
\[|z_{\tilde{\alpha}^{m_n}_{1,n}\tilde{\beta}_{i,n},+}-z_{\tilde{\alpha}^{m_n}_{1,n}\tilde{\beta}_{i,n},-}|=
\frac{\left|\sqrt{\tr^2(\tilde{\alpha}^{m_n}_{1,n}\tilde{\beta}_{i,n})-4}\right|}{|\tilde{d}_{i,n}\tilde{\lambda}^{m_n}_{1,n}|},\]
we have
\[\sigma''|\tr(\tilde{\beta}_{i,n})|<|\tr(\tilde{\alpha}^{m_n}_{1,n}\tilde{\beta}_{i,n})|<\frac{\sigma'''|\tr(\tilde{\beta}_{i,n})|}{(|\tilde{\lambda}_{1,n}|^2-1)}.\]
Since $\kappa^{-1}<|z_{\tilde{\alpha}^{m_n}_{1,n}\tilde{\beta}_{i,n},+}-z_{\tilde{\alpha}^{m_n}_{1,n}\tilde{\beta}_{i,n},-}|<\kappa,$ we have
$\dis(\mathcal{L}_{\tilde{\alpha}_{1,n}},\mathcal{L}_{\tilde{\alpha}^{m_n}_{1,n}\tilde{\beta}_{i,n}})<\delta$ for some $\delta>0.$
By Remark 4.1.A of Proposition \ref{trace} applied to $<\tilde{\alpha}_{1,n},\tilde{\alpha}_{1,n}^{m_n}\tilde{\beta}_{i,n}>$ we have,
\[ |\tr(\tilde{\alpha}^{m_n}_{1,n}\tilde{\beta}_{i,n})|>
\rho\left(\frac{|\tilde{\lambda}_{1,n}|^{2\mathfrak{D}_n}+3}{|\tilde{\lambda}_{1,n}|^{2\mathfrak{D}_n}-1}\right)^{\frac{1}{2\mathfrak{D}_n}}\]
and above bound for $|\tr(\tilde{\alpha}^{m_n}_{1,n}\tilde{\beta}_{i,n})|$ we have,
\begin{align*}
|\tr(\tilde{\beta}_{i,n})|&>\rho\sigma'''^{-1}(|\tilde{\lambda}_{1,n}|^2-1)\left(\frac{|\tilde{\lambda}_{1,n}|^{2\mathfrak{D}_n}+3}{|\tilde{\lambda}_{1,n}|^{2\mathfrak{D}_n}-1}\right)^{\frac{1}{2\mathfrak{D}_n}}>
\rho'\frac{|\tilde{\lambda}_{1,n}|^2-1}{\left(|\tilde{\lambda}_{1,n}|^{2\mathfrak{D}_n}-1\right)^{\frac{1}{2\mathfrak{D}_n}}}\\
&>\rho''\left(|\tilde{\lambda}_{1,n}|^{2\mathfrak{D}_n}-1\right)^{1-\frac{1}{2\mathfrak{D}_n}}
\end{align*}
The last inequality implies that,
\[|\tilde{\lambda}_{1,n}|^{2}-1>|\tilde{\lambda}_{1,n}|^{2\mathfrak{D}_n}-1>\rho'''|\tr(\tilde{\beta}_{i,n})|^{\frac{2\mathfrak{D}_n}{2\mathfrak{D}_n-1}}.\]
\end{proof}
Next we are ready to take care the subcase when $f(\alpha_{i,n_j},\alpha_{1,n_j})>M.$
\begin{prop}\label{classical-i-2}
Suppose that there exists a $M>0$ such that,
\[M<f(\alpha_{i,n},\alpha_{1,n}).\]
Then $<\alpha_{1,n}, \alpha^{k_{2,n}-1}_{1,n}\alpha_{2,n}\alpha^{l_{2,n}}_{1,n} ,...,\alpha^{k_{k,n}-1}_{1,n}\alpha_{k,n}\alpha^{l_{k,n}}_{1,n}>$ are classical generators for large $n.$
\end{prop} 
To prove Proposition \ref{classical-i-2} when $\limsup_n|\tr(\alpha^{k_{i,n}-1}_{1,n}\beta_{i,n})|<\infty$ we use disjoint non-isometric circles 
for $\alpha^{k_{i,n}-1}_{1,n}\beta_{i,n}$ based on the following Lemma.
\begin{proof}{(Proposition \ref{classical-i-2})}
First assume that $\limsup_n|\tr( \alpha^{k_{i,n}-1}_{1,n}\alpha_{i,n}\alpha^{l_{i,n}}_{1,n} )|<\infty.$
Let $\mathcal{S}_{o_{i,n},r_{i,n}},\mathcal{S}_{o'_{i,n},r'_{i,n}}$ be the disjoint circles for $\alpha^{k_{i,n}-1}_{1,n}\alpha_{i,n}\alpha^{l_{i,n}}_{1,n} $ given by Proposition \ref{non-iso}.  We will
show that $\lim_n\frac{r_n+r'_n}{|\lambda_{1,n}|^2-1}=0.$\par
Note that
\begin{align*}
\lim_n\frac{r_{i,n}+r'_{i,n}}{|\lambda_{1,n}|^2-1}&\le\lim_n|z_{\alpha^{k_{i,n}-1}_{1,n}\alpha_{i,n}\alpha^{l_{i,n}}_{1,n} ,+}-
z_{\alpha^{k_{i,n}-1}_{1,n}\alpha_{i,n}\alpha^{l_{i,n}}_{1,n},-}|
\frac{2|\tilde{\lambda}_{i,n}|}{(|\tilde{\lambda}_{i,n}|^2-1)(|\lambda_{1,n}|^2-1)}\\
&\text{by $|\tr(\alpha^{k_{i,n}-1}_{1,n}\alpha_{i,n}\alpha^{l_{i,n}}_{1,n} )|<C$ for some $C>0$ we have,}\\
&\le\lim_n|z_{\alpha^{k_{i,n}-1}_{1,n}\alpha_{i,n}\alpha^{l_{i,n}}_{1,n},+}-z_{\alpha^{k_{i,n}-1}_{1,n}\alpha_{i,n}\alpha^{l_{i,n}}_{1,n},-}|
\frac{2C}{(|\tilde{\lambda}_{i,n}|^2-1)(|\lambda_{1,n}|^2-1)}
\end{align*}
Since $\limsup_n|\tr(\alpha^{k_{i,n}-1}_{1,n}\alpha_{i,n}\alpha^{l_{i,n}}_{1,n})|<\infty$ and $\limsup_n|\lambda_{1,n}|^{k_{i,n}-1}<\infty,$ we have by Lemma \ref{fix-bound-2},
\[|z_{\alpha^{k_{i,n}-1}_{1,n}\alpha_{i,n}\alpha^{l_{i,n}}_{1,n} ,+}-z_{\alpha^{k_{i,n}-1}_{1,n}\alpha_{i,n}\alpha^{l_{i,n}}_{1,n} ,-}|
\asymp\frac{1}{|\tr(\alpha_{i,n})|}.\]
By Proposition \ref{trace}, 
\[|\tr(\alpha_{i,n})|^{2\mathfrak{D}_n}>\rho^{\mathfrak{D}_n}\frac{|\lambda_{1,n}|^{2\mathfrak{D}_n}+3}{|\lambda_{1,n}|^{2\mathfrak{D}_n}-1}>\frac{4\rho^{\mathfrak{D}_n}}{|\lambda_{1,n}|^2-1}.\]
The last inequality follows from $|\lambda_{1,n}|^{2\mathfrak{D}_n}-1<|\lambda_{1,n}|^2-1$ for large $n.$\par
By our assumptiom that $M<\frac{|\zeta_{\tilde{\beta}_{i,n}}-1|}{|\tilde{\lambda}_{i,n}|^2-1},$ and Lemma \ref{axis-m=0} we have,
\begin{align*}
\lim_n\frac{r_{i,n}+r'_{i,n}}{|\lambda_{1,n}|^2-1}&\le\lim_n (4\rho^{\mathfrak{D}_n})^{-1}M'|\tr(\alpha_{i,n})|^{-1}|\tr(\alpha_{i,n})|^{2\mathfrak{D}_n}|\tr(\tilde{\beta}_{i,n})|^{\frac{2\mathfrak{D}_n}{1-2\mathfrak{D}_n}}\\
&\text{since $\quad |\tr(\tilde{\beta}_{i,n})|<\sigma|\tr(\alpha_{i,n})|$ we have,}\\
&<\lim_n (4\rho^{\mathfrak{D}_n})^{-1}\sigma^{\frac{2\mathfrak{D}_n}{1-2\mathfrak{D}_n}}M'|\tr(\alpha_{i,n})|^{\frac{\mathfrak{D}_n(6-4\mathfrak{D}_n)-1}{1-2\mathfrak{D}_n}}=0
\end{align*}
Now the circles $\mathcal{S}_{o_{i,n},r_{i,n}}$ contains one of $z_{\alpha^{k_{i,n}-1}_{1,n}\alpha_{i,n}\alpha^{l_{i,n}}_{1,n},-},
z_{ \alpha^{k_{i,n}-1}_{1,n}\alpha_{i,n}\alpha^{l_{i,n}}_{1,n} ,+}$ and $\mathcal{S}_{o'_{i,n},r'_{i,n}}$ contains
the other fixed point, and since $|\eta_{\alpha^{k_n-1}_n\beta_n}|\to1$ and $|\zeta_{\alpha^{k_n-1}_n\beta_n}|\to 1,$ 
it follows from Lemma \ref{fix-trace}, we must have $\mathcal{S}_{o_{i,n},r_{i,n}}, \mathcal{S}_{o'_{i,n},r'_{i,n}}$
contained in the region between $\frac{1}{|\lambda_{1,n}|}$ and $|\lambda_{1,n}|$ for large $n.$ Hence we have classical generators for large $n.$\par
Now if there exists a subsequence such that $|\tr( \alpha^{k_{i,n}-1}_{1,n}\alpha_{i,n}\alpha^{l_{i,n}}_{1,n} )|\to\infty,$ 
then $<\alpha_{1,n},...,\alpha^{k_{k,n}-1}_{1,n}\alpha_{k,n}\alpha^{l_{k,n}}_{1,n}>$ satisfies
conditions of Lemma \ref{classical}.
\end{proof}
Hence the case $(i_2)$ is complete. Next we consider Type I$_2.$ This is relatively straightforward and parallel to case Type I$_1$  but we provide another short proof in the following.
\subsection{Type I$_2$}
For the Type I$_2$ it can be seen that we can certainly follow similarly procedure as given in our proof of the Type I$_1$ 
and just simply do some modifications. In fact many of our previous estimates will be 
became much simpler by $|\lambda_{1,n}|>\lambda$ hence all estimates that requires bounds on $(|\lambda_{1,n}|^2-1)^{-1}$ are trivial. 
To avoid much repetition of our previous proof of the Type $I_1$  we therefore present a alternative and somewhat much shorter  proof of Type I$_2.$ \par

\begin{lem}\label{delta-z}
Let $\alpha_n,\beta_n$ be a sequence of loxodromic transformations with fixed points of $\alpha_n$ be $0,\infty.$ 
Suppose $|z_{\beta_n,+}-z_{\beta_n,-}|\to 0$ and $C^{-1}<|z_{\beta_n,\pm}|<C.$
Then there exists $\delta>0$ such that 
\[\dis(\mathcal{L}_{\alpha_n},\mathcal{L}_{\beta_n})
<\log\left(\frac{\delta}{|z_{\beta_n,+}-z_{\beta_n,-}|}\right).\]
\end{lem}
\begin{proof}
By using hyperbolic distance formula, and since 
$|z_{\beta_n,+}-z_{\beta_n,-}|\to 0$ and
$|z_{\beta_n,\pm}|\to 1.$ we have for large $n$,
\begin{eqnarray*}
\cosh\dis(\mathcal{L}_{\alpha_n},\mathcal{L}_{\beta_n})&<&\frac{|z_{\beta_n,u}|^2+\frac{1}{4}(|z_{\beta_n,u}+z_{\beta_n,l}|)^2}
{\frac{1}{2}(|z_{\beta_n,l}|+\frac{1}{2}|z_{\beta_n,u}-z_{\beta_n,l}|) (|z_{\beta_n,u}-z_{\beta_n,l}| )}+1\\
&<&\frac{\rho}{|z_{\beta_n,u}-z_{\beta_n,l}|}, \quad\text{for some $\rho>0$}
\end{eqnarray*}
\end{proof}
\begin{proof}{(Type I$_2$)}
Suppose that there is a subsequence (use same index for subsequence) such that $|\tr( \alpha^{k_{i,n}}_{1,n}\alpha_{i,n}\alpha^{l_{i,n}}_{1,n} )|\to\infty.$ 
Then by our assumption of
type I$_2,$ $|\zeta_{\alpha^{k_{i,n}}_{1,n}\alpha_{i,n}\alpha^{l_{i,n}}_{1,n}}|\to 1$ and $|\lambda_{1,n}|>\lambda>1$ we have for large $n$,  
\begin{itemize}
\item
$\lambda^{-1}< |\eta_{\alpha^{k_{i,n}}_{1,n}\alpha_{i,n}\alpha^{l_{i,n}}_{1,n}}| \le|\zeta_{\alpha^{k_{i,n}}_{1,n}\alpha_{i,n}\alpha^{l_{i,n}}_{1,n}}|
<\lambda,$ and
\item
\[
0=\lim_n\begin{cases}
\frac{1}{\left|\tr\left(\alpha^{k_{i,n}}_{1,n}\alpha_{i,n}\alpha^{l_{i,n}}_{1,n}\right)\right|
\left(\left|\eta_{\psi_{n;2}\alpha^{k_{i,n}}_{1,n}\alpha_{i,n}\alpha^{l_{i,n}}_{1,n}\psi_{n;2}^{-1}}\right|-\left|\lambda_{1,n}\right|^{-1}\right)}\\
\frac{1}{\left|\tr\left( \alpha^{k_{i,n}}_{1,n}\alpha_{i,n}\alpha^{l_{i,n}}_{1,n} \right)\right|\left(\left|\zeta_{\psi_{n;2}\alpha^{k_{i,n}}_{1,n}
\alpha_{i,n}\alpha^{l_{i,n}}_{1,n}\psi_{n;2}^{-1}}\right|-\left|\lambda_{1,n}\right|\right)}
\end{cases}\]

\end{itemize}
By Lemma \ref{fix-trace}, $<\alpha_{1,n},\alpha^{k_{2,n}}_{1,n}\alpha_{2,n}\alpha^{l_{2,n}}_{1,n},...,\alpha^{k_{k,n}}_{1,n}\alpha_{k,n}\alpha^{l_{k,n}}_{1,n}>$ satisfies the second set of conditions of Lemma \ref{classical}, hence
classical.\par
Otherwise we have $|\tr(\alpha^{k_{i,n}}_{1,n}\alpha_{i,n}\alpha^{l_{i,n}}_{1,n})|<C$ for some $C>0.$ Since 
$|\zeta_{\alpha^{k_{i,n}}_{1,n}\alpha_{i,n}\alpha^{l_{i,n}}_{1,n}}|$ $\to 1$ and $\eta_{\alpha^{k_{i,n}}_{1,n}\alpha_{i,n}\alpha^{l_{i,n}}_{1,n}}\to 1,$
by Lemma \ref{fix-trace} we have $|z_{\alpha^{k_{i,n}}_{1,n}\alpha_{i,n}\alpha^{l_{i,n}}_{1,n},\pm}|\to 1.$ Now by Remark \ref{4A}.1.A and 
$|\tr(\alpha^{k_{i,n}}_{1,n}\alpha_{i,n}\alpha^{l_{i,n}}_{1,n})|<C$ 
we must have $\dis(\mathcal{L}_{\alpha_{1,n}},\mathcal{L}_{\alpha^{k_{i,n}}_{1,n}\alpha_{i,n}\alpha^{l_{i,n}}_{1,n}})$ $\to\infty.$ This implies that 
$|z_{\alpha^{k_n}_n\beta_n,+}-z_{\alpha^{k_n}_n\beta_n,-}|\to 0.$ More precisely we have,
Let $\mathcal{S}_{o,r_{i,n}},\mathcal{S}_{o',r'_{i,n}}$ be the circles given by Proposition \ref{non-iso}.
\begin{prop}\label{delta-z-r}
$|\tr(\alpha^{k_{i,n}}_{1,n}\alpha_{i,n}\alpha^{l_{i,n}}_{1,n})|<C$ for some $C>0.$ 
Then we must have $(r_{i,n}+r'_{i,n})\to 0.$
\end{prop}
\begin{proof}
First note that as we have showed $|\tr(\alpha^{k_{i,n}}_{1,n}\alpha_{i,n}\alpha^{l_{i,n}}_{1,n})|<C$ implies,
\[|z_{\alpha^{k_{i,n}}_{1,n}\alpha_{i,n}\alpha^{l_{i,n}}_{1,n},+}-z_{\alpha^{k_{i,n}}_{1,n}\alpha_{i,n}\alpha^{l_{i,n}}_{1,n},-}|\to 0\quad\text{and}
\quad |z_{\alpha^{k_{i,n}}_{1,n}\alpha_{i,n}\alpha^{l_{i,n}}_{1,n},\pm}|\to 1. \]
Set $\psi_{i,n}(x)=\frac{x-z_{\alpha^{k_{i,n}}_{1,n}\alpha_{i,n}\alpha^{l_{i,n}}_{1,n},+}}{x-z_{\alpha^{k_{i,n}}_{1,n}\alpha_{i,n}\alpha^{l_{i,n}}_{1,n},-}}.$ 
Let $\lambda_{\alpha^{k_{i,n}}_{1,n}\alpha_{i,n}\alpha^{l_{i,n}}_{1,n}}$ be the mutiplier of
$\psi_{i,n}\alpha^{k_{i,n}}_{1,n}\alpha_{i,n}\alpha^{l_{i,n}}_{1,n}\psi^{-1}_{i,n}.$ By Remark \ref{4A}.1.A applied to 
$\psi_{i,n}<\alpha_{i,n},\alpha^{k_{i,n}}_{1,n}\alpha_{i,n}\alpha^{l_{i,n}}_{1,n}>\psi^{-1}_{i,n}$ we have,
\[|\lambda_{\alpha_{1,n}}|^2>\left(\frac{|\lambda_{\alpha^{k_{i,n}}_{1,n}\alpha_{i,n}\alpha^{l_{i,n}}_{1,n}}|^{2\mathfrak{D}_n}+3}
{|\lambda_{\alpha^{k_{i,n}}_{1,n}\alpha_{i,n}\alpha^{l_{i,n}}_{1,n}}|^{2\mathfrak{D}_n}-1}\right)^{\frac{1}{\mathfrak{D}_n}}
\left(e^{-2\dis(\mathcal{L}_{\psi_{i,n}\alpha_{1,n}\psi_{i,n}^{-1}},\mathcal{L}_{\psi_{i,n}\alpha^{k_{i,n}}_{1,n}\alpha_{i,n}\alpha^{l_{i,n}}_{1,n}\psi^{-1}_{i,n}})}\right). \] 
Since $\{z_{\psi_{i,n}\alpha_{1,n}\psi^{-1}_{i,n},+},z_{\psi_{i,n}\alpha_{1,n}\psi^{-1}_{i,n},-}\}=
\{1,\frac{z_{\alpha^{k_{i,n}}_{1,n}\alpha_{i,n}\alpha^{l_{i,n}}_{1,n},+}}{z_{\alpha^{k_{i,n}}_{1,n}\alpha_{i,n}\alpha^{l_{i,n}}_{1,n},-}}\}$
we have,
\[|z_{\psi_{i,n}\alpha_{1,n}\psi_{i,n}^{-1},\pm}|\to 1,\quad\text{and}\] 
\[\left|z_{\psi_{i,n}\alpha_{1,n}\psi^{-1}_{i,n},+}-z_{\psi_{i,n}\alpha_{1,n}\psi^{-1}_{i,n},-}\right|=
\left|1-\frac{z_{\alpha^{k_{i,n}}_{1,n}\alpha_{i,n}\alpha^{l_{i,n}}_{1,n},+}}{z_{\alpha^{k_{i,n}}_{1,n}\alpha_{i,n}\alpha^{l_{i,n}}_{1,n},-}}\right|\to 0.\]
By Lemma \ref{delta-z} we have for large $n$,
\[\dis(\mathcal{L}_{\psi_{i,n}\alpha_{1,n}\psi_{i,n}^{-1}},\mathcal{L}_{\psi_{i,n}\alpha^{k_{i,n}}_{1,n}\alpha_{i,n}\alpha^{l_{i,n}}_{1,n}\psi^{-1}_{i,n}})
<\log\left(\frac{\delta}
{|z_{\psi_{i,n}\alpha_{1,n}\psi^{-1}_{i,n},+}-z_{\psi_{i,n}\alpha_{1,n}\psi^{-1}_{i,n},-}|}\right).\]
This implies that for large $n$,
\begin{eqnarray*}
|\lambda_{\alpha_{1,n}}|&>&\delta^{-1}\left(\frac{ |\lambda_{\alpha^{k_{i,n}}_{1,n}\alpha_{i,n}\alpha^{l_{i,n}}_{1,n}}|^{2\mathfrak{D}_n}+3} 
{|\lambda_{\alpha^{k_{i,n}}_{1,n}\alpha_{i,n}\alpha^{l_{i,n}}_{1,n}}|^{2\mathfrak{D}_n}-1}\right)^{\frac{1}{2\mathfrak{D}_n}}
 |z_{\psi_{i,n}\alpha_{1,n}\psi^{-1}_{i,n},+}-z_{\psi_{i,n}\alpha_{1,n}\psi^{-1}_{i,n},-}|\\
&>&\delta^{-1}\left(|\lambda_{\alpha^{k_{i,n}}_{1,n}\alpha_{i,n}\alpha^{l_{i,n}}_{1,n}}|^{2\mathfrak{D}_n}+3\right)^{\frac{1}{2\mathfrak{D}_n}}
\left(\frac{|z_{\psi_{i,n}\alpha_{1,n}\psi^{-1}_{i,n},+}-z_{\psi_{i,n}\alpha_{1,n}\psi^{-1}_{i,n},-}|}
{|\lambda_{\alpha^{k_{i,n}}_{1,n}\alpha_{i,n}\alpha^{l_{i,n}}_{1,n}}|^2-1}\right)\\
&>&\delta^{-1}\left(|\lambda_{\alpha^{k_{i,n}}_{1,n}\alpha_{i,n}\alpha^{l_{i,n}}_{1,n}}|^{2\mathfrak{D}_n}+3\right)^{\frac{1}{2\mathfrak{D}_n}}\frac{(r_n+r'_n)}
{2C'|z_{\alpha^{k_{i,n}}_{1,n}\alpha_{i,n}\alpha^{l_{i,n}}_{1,n},-}|}
\end{eqnarray*}
The last inequality follows from Proposition \ref{non-iso} and $|\lambda_{\alpha^{k_{i,n}}_{1,n}\alpha_{i,n}\alpha^{l_{i,n}}_{1,n}}|<C'.$ The second inequality in the above calculations follows from that 
$|\lambda_{\alpha^{k_{i,n}}_{1,n}\alpha_{i,n}\alpha^{l_{i,n}}_{1,n}}|<C'$
and for large $n$ we have, 
\[\left(|\lambda_{\alpha^{k_{i,n}}_{1,n}\alpha_{i,n}\alpha^{l_{i,n}}_{1,n}}|^{2\mathfrak{D}_n}-1\right)^{\frac{1}{2\mathfrak{D}_n}}
\le|\lambda_{\alpha^{k_{i,n}}_{1,n}\alpha_{i,n}\alpha^{l_{i,n}}_{1,n}}|^{2\mathfrak{D}_n}-1\le|\lambda_{\alpha^{k_{i,n}}_{1,n}\alpha_{i,n}\alpha^{l_{i,n}}_{1,n}}|^2-1.\]
Since $|\lambda_{\alpha_{1,n}}|<M$ for some $M,$ hence by we have,
\[r_{i,n}+r'_{i,n}<\frac{2C'M\delta|z_{\alpha^{k_{i,n}}_{1,n}\alpha_{i,n}\alpha^{l_{i,n}}_{1,n},-}|}
{\left(|\lambda_{\alpha^{k_{i,n}}_{1,n}\alpha_{i,n}\alpha^{l_{i,n}}_{1,n}}|^{2\mathfrak{D}_n}+3\right)^{\frac{1}{2\mathfrak{D}_n}}}<\frac{2C'M\delta'}{4^{\frac{1}{2\mathfrak{D}_n}}}\to 0.\]
\end{proof}
Now we can continue and finish the proof for $|\tr(\alpha^{k_{i,n}}_{1,n}\alpha_{i,n}\alpha^{l_{i,n}}_{1,n})|<C.$ By Proposition \ref{delta-z-r} and 
$|\lambda_{1,n}|>\lambda>1$ (this is condition of type I$_2$) we have, $\frac{r_{i,n}+r'_{i,n}}{|\lambda_{1,n}|^2-1}\to 0.$ 
Since the circles $\mathcal{S}_{o_{i,n},r_{i,n}}$ contains one of $z_{\alpha^{k_{i,n}}_{1,n}\alpha_{i,n}\alpha^{l_{i,n}}_{1,n},-},
z_{ \alpha^{k_{i,n}}_{1,n}\alpha_{i,n}\alpha^{l_{i,n}}_{1,n} ,+}$ and $\mathcal{S}_{o'_{i,n},r'_{i,n}}$ contains
the other fixed point, and $\eta_{\alpha^{k_{i,n}}_{1,n}\alpha_{i,n}\alpha^{l_{i,n}}_{1,n}}\to 1$ and by condition I$_2$ we have
$|\zeta_{\alpha^{k_{i,n}}_{1,n}\alpha_{i,n}\alpha^{l_{i,n}}_{1,n}}|\to 1,$ it follows from Lemma \ref{fix-trace},
 we must have $\mathcal{S}_{o_{i,n},r_{i,n}}, \mathcal{S}_{o'_{i,n},r'_{i,n}}$
contained in the region between $\frac{1}{|\lambda_{1,n}|}$ and $|\lambda_{1,n}|$ for large $n.$ Hence we have classical generators for large $n.$
This completes the proof for type I$_2$.\par
\end{proof}
\section{Single boundary degeneracy Type II}
We show this type of degeneracy is classical by reducing to type I for large $n.$\par
Set Mobius transformations $\psi_{n;3}(x)=\min_{2\le i\le k}|\zeta^{-\frac{1}{2}}_{\alpha_{i,n}}\lambda^{1-k_{i,n}}_{\lambda_{1,n}}| x$ and consider the generators
$\psi_{n;3}<\alpha_{1,n},...,\alpha^{k_{k,n}-1}_{1,n}\alpha_{k,n}\alpha_{1,n}^{l_{k,n}}>\psi^{-1}_{n;3}.$ Then we have 
$1\le|\zeta_{\psi_{n;3}\alpha_{1,n}^{k_{i,n}-1}\alpha_{i,n}\psi_{n;3}^{-1}}| 
\le|\eta_{\psi_{n;3}\alpha_{1,n}^{k_{i,n}-1}\alpha_{i,n}\psi_{n;3}^{-1}}|\le|\lambda_{1,n}|^2.$ Since $1\le |\zeta_{\alpha_{i,n}\lambda_{1,n}^{2k_{i,n}}}|<|\lambda_{1,n}|^2$ and
$|\zeta_{\alpha_{i,n}\lambda_{1,n}^{2k_{i,n}}}|-|\lambda_{1,n}^2|\to 0$ (condition of $(B)$) we have, 
\[|\zeta_{\alpha_{i,n}}\lambda_{1,n}^{2k_{i,n}}|(1-|\zeta^{-1}_{\alpha_{i,n}}\lambda_{1,n}^{2-2k_{i,n}}|)\to 0.\]
Since $|\lambda_{1,n}|<M$ for some $M>0,$ we have $|\eta_{\psi_{n;3}\alpha^{k_{i,n}-1}_{1,n}\alpha_{i,n}\psi^{-1}_{n;3}}|\to 1.$ Hence by considering
$<\psi_{n;3}\alpha_{1,n}\psi^{-1}_{n;3},...,\psi_{n;3}\alpha^{-l_{k,n}}_{1,n}\alpha^{-1}_{k,n}\alpha^{1-k_{k,n}}_{1,n}\psi^{-1}_{n;3}>,$ we have
\[\left||\zeta_{\psi_{n;3}\alpha_{1,n}^{-l_{i,n}}\alpha^{-1}_{i,n}\alpha^{1-k_{i,n}}_{1,n}\psi^{-1}_{n;3}}|-
|\eta_{\psi_{n;3}\alpha_{1,n}^{-l_{i,n}}\alpha^{-1}_{i,n}\alpha^{1-k_{i,n}}_{1,n}\psi^{-1}_{n;3}}|\right|\to 0,\] and 
$|\eta_{\psi_{n;3}\alpha^{-l_{i,n}}_{1,n}\alpha^{-1}_{i,n}\alpha^{1-k_{i,n}}_{1,n}\psi^{-1}_{n;3}}|\to 1.$
We have reduced Type II to Type I.\par 
\section{Elliptical degeneracy Type III}
We show this type of degeneracy is classical by reducing to type I for large $n.$\par
Since $1\le|\eta_{\alpha^{k_{i,n}}_{1,n}\alpha_{i,n}\alpha^{l_{i,n}}_{1,n}}|\le|\zeta_{\alpha^{k_{i,n}}_{1,n}\alpha_{i,n}\alpha^{l_{i,n}}_{1,n}}|,$ 
Type III implies Type I.
In this can trivially reduce to Type I.\par
Now we have completed the proof for Types I, II, III, IV. next we will link them together and consider the general case without any restrictions on $\alpha_{1,n}.$

\section{Non-Collapsing fixed points subspaces}
The main result in this section is the following,  when we have a sequence of $\Gamma_n$ of Schottky groups which is $k-1$ classical and $Z_{\G_n}>\tau$ for some $\tau>0$, such that $\mathfrak{D}_{\G_n}\to 0$ then, $\G_n$ will contain a subsequence which will be classical Schottky groups for large enough $n.$
Given a set of generators $<\alpha_{1},...,\alpha_{k}>$ we define Nielsen maps $F_{i,l,m}$ for integers $1\le i\le k$ and natural number $l,m$ as:
$F_{i,l,m}\alpha_{j,k}=\alpha^l_i\alpha_{j}\alpha^m_i.$\par
First we show that given $\G_n$ sequence of $k-1$ classical Schottky groups with $\mathfrak{D}_n\to 0$, we can always choose generators so that the $k-1$ classical generators are of standard form, which means that centers of isometric circles are bounded between $1$ and $\alpha_{1,n}$ for $n$ sufficiently large.

\begin{lem}\label{k-1-form}
Let $\G_n$ be a sequence of rank $k$ Schottky group which is $k-1$ classical and $\mathfrak{D}_{\G_n}\to 0.$ Then we can find a subsequence of generating set $S_n$ of 
$\G_n$, up to Mobius conjugation, such that $\{\alpha_{2,n},...,\alpha_{k,n}\}\subset S_n$ is a sequence of $k-1$ classical generators and, $S_n$ is of standard form. 
\end{lem}
\begin{proof}
Since $\G_n$ is $k-1$ classical and $\mathfrak{D}_{\G_n}\to 0$, it follows from Prop \ref{normal-def}, we have a normal subsequence of generating sets (we using same index for subsequence) $<\alpha_{1,n},\alpha_{2,n},...,\alpha_{k,n}>$ of $\G_n.$ As before there exists integers $l_{1,j,n},m_{1,j,n}$ such that we can put these sequence of generating sets in to standard form by $F_{1,l_{1,j,n},m_{1,j,n}}$ applied to $\alpha_{j,n}$ for $2\le j\le k.$  We can assume that the all centers of isometric circles of $F_{1,l_{1,j,n},m_{1,j,n}}\alpha_{j,n}$ are strictly bounded away from $|\lambda_{1,n}|^2.$ Note that the new sequence will not necessarily be $k-1$ classical generators. \par
Suppose this is the case. Since $\G_n$ is $k-1$ classical sequence, there exists classical generating set for the $k-1$ generators not including $\alpha_{1,n}.$ 
In fact, there exists $F_{i,l_{i,j,n},m_{i,j,n}}$ such that elements of this classical generators will be of the form $F_{i,l_{i,j,n},m_{i,j,n}}\alpha_{j,n}$ for $i\not= 1$ and integers $l_{i,j,n},m_{i,j,n}$ depends on indices $i,j,n.$ Note that if it's not of standard form then, by repeating this process for a fixed $n$, we have generating set with generators with $|\mbox{trace}|\to\infty.$ The generators constructed will have leading term of the form 
$F_{i,l_{i,j,n},m_{i,j,n}}F_{1,l_{1,j,n},m_{1,j,n}}$ where $i\not=1$ and some integers $l_{i,j,n},m_{i,j,n}.$ In particular continue this process, 
we have generators of arbitrarily small isometric circles. And since we only terminate when we have elements of leading term of the form $F_{1,l_{1,j,n},m_{1,j,n}}F_{i,l_{i,j,n},m_{i,j,n}}$ becomes $k-1$ classical generators, we have every 
$F_{1,l_{1,j,n},m_{i,j,n}}F_{i,l_{1,j,n},m_{i,j,n}}$ is distinct for the repetition. This implies all fixed points of these $k-1$ generators are arbitrarily close to centers of circles. \par
For sufficiently many repetitions, $<F_{i,l_{i,j,n},m_{i,j,n}}\alpha'_{j,n}>$ for $i\ge2$ are classical generators of rank $k-1$ Schottky group $\G$ with each generator's fixed points and isometric centers arbitrarily close. Since $<\alpha'_{j,n}>$ is also generators of $\G$ which must also of fixed points and isometric centers arbitrarily close,
and $<F_{i,l_{i,j,n},m_{i,j,n}}\alpha'_{j,n}>$ is classical, we must have fixed points of $<\alpha'_{j,n}>$ contained within isometric circles of $F_{i,l_{i,j,n},m_{i,j,n}}\alpha'_{j,n}.$ 
Since isometric circle centers of $\alpha'_{j,n}$ are bounded within $1, |\lambda_{1,n}|^2$ and strictly away from $|\lambda_{1,n}|^2$, we must have isometric centers of $F_{i,l_{i,j,n},m_{i,j,n}}\alpha'_{j,n}$ also bounded arbitrarily close to $1$ and away from  $|\lambda_{1,n}|^2$ for sufficiently many repetitions. Hence
we can find a Mobius map (scaling) such that, fixes $0,\infty$ and moves all isometric centers of $F_{i,l_{i,j,n},m_{i,j,n}}\alpha'_{j,n}$ to be bounded within 
$1,|\lambda_{1,n}|^2.$
Therefore the repetition process must terminate 
with $k-1$ classical generators $\alpha'_{j,n}$ with centers bounded within $1, |\lambda_{1,n}|^2,$ hence a generating set of $k-1$ classical and standard form.\par

\end{proof}
Given $\G_n$ sequence of $k-1$ classical Schottky group with $\mathfrak{D}_n\to 0,$  Lemma \ref{k-1-form} provides generators of standard form with $k-1$ classical for large $n.$ However note that, the circles of the $k-1$ generators may not be disjointed from circles of $\alpha_{1,n}$, hence they don't necessarily constitute classical generating set for $\G_n$.
This is due to the fact we have degenerate types I, II, III, IV. 
\begin{thm}\label{t-space}
Let $\mf{J}_k$ be the rank $k$ Schottky space.  
For each $\tau>0$ there exists a $\nu>0$ such that, 
\[\{[\G]\in\mf{J}_k(\tau)|  \mathfrak{D}_\G\le\nu, \G \text{ is } k-1 \text{ classical},  \text{ for some }\G\in[\G] \}\subset\mf{J}_{k,o}.\]
\end{thm}

\begin{proof}
We proof by contradiction. It follows from argument before for $(I)$ we only need to assume that $|\alpha_{1,n}|<C$ for some $C>0.$ \par
Now it follows from Lemma \ref{k-1-form}, we have sequence of generating sets $S_n$ of $\G_n$ such that it is $k-1$ classical and of standard form. Since 
$\alpha_{j,n}$, for $2\le j\le k$ are classical generators and have centers bounded within $1$ and $|\lambda_{\alpha_{1,n}}|^2$ (standard form), the only issues that
prevents $<\alpha_{1,n},...,\alpha_{k,n}>$ been a classical generators are degeneracies described earlier. Hence, we have reduced the general case to degenerate type I, II, III, IV. Now it follows from that degenerate types are all in fact classical generators for $\mathfrak{D}_{\G_n}$ sufficiently small, i.e large $n$, which gives our result.

\end{proof}

\section{Collapsing fixed points types} 
This section and following sections are devoted in proving Theorem \ref{2-fixed-point}. The Theorem \ref{2-fixed-point} will enable us to remove the constraint on $Z_{\G_n}.$ that was placed in the Theorem \ref{t-space}. The idea of the proof is based on analysis of the behavior of the set of \emph{fixed points} of generators. This should be compared to 
the previous section where analysis of centers of isometric circles were used.\par 
Recall Theorem \ref{t-space} states that a given Schottky group $\G$ is classical Schottky group if: $Z_{<\alpha_{1},...,\alpha_{k}>}>\tau$ and Hausdorff dimension 
$\mathfrak{D}_\G$ is sufficiently small and $\G$ is $k-1$ classical. Now in this section Theorem \ref{2-fixed-point}, we will state and start it's proof that, for a given Schottky group 
$\G$ which is $k-1$ classical of small Hausdorff dimension $\mathfrak{D}_\G$ we have the following dichotomy: 
\begin{itemize}
\item
$Z_{<\alpha_{1,n},...,\alpha_{j,n}>}>c$ for some $c>0,$ 
\item
 is classical Schottky group. 
\end{itemize}
Therefore, combining with Theorem \ref{t-space}, we have the proof that Schottky groups of fixed rank $k$ which is $k-1$ classical and with sufficiently small Hausdorff dimension are classical Schottky groups.
\begin{thm}\label{2-fixed-point}
There exists $c>0$ such that: Let $\Gamma_n$ be a sequence of rank $k$ Schottky groups that is $k-1$ classical with $\mathfrak{D}_n\to 0$.
Then for sufficiently large $n$, there exists a subsequence (using same index) $\Gamma_{n}$ with generating set 
$<\alpha_{1,n},\alpha_{2,n},...,\alpha_{k,n}>$ such that:
\begin{itemize}
\item
$Z_{<\alpha_{1,n},...,\alpha_{k,n}>}>c$ or, 
\item
$<\alpha_{1,n},\alpha_{2,n},...,\alpha_{k,n}>$ is classical Schottky groups for large $n.$
\end{itemize}
\end{thm}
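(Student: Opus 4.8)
The plan is to combine a greedy construction of the generating set with the classicality criteria of Sections~5--7 and an induction on the rank $k$. First I would invoke Corollary~\ref{a-b-cor} together with the $\g$-generator construction of Section~6 to pass to a subsequence and fix a generating set $S_{\G_n}=\langle\alpha_{1,n},\beta_{2,n},\dots,\beta_{k,n}\rangle$ with $\kappa(S_{\G_n})\to\infty$, with $\alpha_{1,n}$ conjugated to fixed points $0,\infty$ and of minimal trace, and with all fixed points of the $\beta_{i,n}$ confined to the annulus $D_{|\lambda_{\alpha_{1,n}}|^{-2},1}$. If a subsequence has $|\tr(\alpha_{1,n})|\to\infty$ then every generator has unbounded trace, and Lemma~\ref{classical} forces $\G_n$ classical for large $n$; since any subfamily of classical generators generates a classical subgroup, the assertion holds in that case with $j=1$, the prefix $\langle\alpha_{1,n}\rangle$ having vacuously $Z=+\infty$. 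So I reduce to $|\tr(\alpha_{1,n})|<M$, where Remark~\ref{a-b-rem} gives the $\beta$-condition, hence by Proposition~\ref{tracea} and Remark~4.1.A each $\beta_{i,n}$ satisfies either $|\tr(\beta_{i,n})|\to\infty$ or $|z_{\beta_{i,n},+}-z_{\beta_{i,n},-}|\to0$; reorder so that $|\tr(\beta_{2,n})|\le\cdots\le|\tr(\beta_{k,n})|$.

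Next I would run a greedy selection with a fixed constant $c_0>0$ depending only on the universal constants of Lemmas~\ref{fix-tracea},~\ref{fixsbound-1},~\ref{fixsbound-2} and Proposition~\ref{tracea}. Suppose $\alpha_{1,n},\dots,\alpha_{i-1,n}$ have been produced as $\g$-generators with $Z_{\langle\alpha_{1,n},\dots,\alpha_{i-1,n}\rangle}>c_0$. Using Lemma~\ref{fix-tracea} to pin the fixed points of $\alpha_{1,n}^{k}\beta_{i,n}\alpha_{1,n}^{l}$ near $\eta,\zeta$ and Lemma~\ref{fixsbound-2} to control their gap, I attempt to choose integers $k,l$ placing both fixed points of $\alpha_{i,n}:=\alpha_{1,n}^{k}\beta_{i,n}\alpha_{1,n}^{l}$ inside $D_{|\lambda_{\alpha_{1,n}}|^{-2},1}$, at distance $>c_0$ from $\{0,\infty\}$, from one another, and from the previously placed fixed points; by Proposition~\ref{invariant} this keeps $\kappa(S_{\G_n})\to\infty$. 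If some admissible choice exists, append $\alpha_{i,n}$ and continue; if for the current $i$ no admissible choice exists, that is, if every $\langle\alpha_{1,n}\rangle$-translate of the fixed-point pair of $\beta_{i,n}$ collides in the limit with $\{0,\infty\}$ or with an earlier fixed point, then set $j:=i-1$ and stop. By construction the prefix $\langle\alpha_{1,n},\dots,\alpha_{j,n}\rangle$ satisfies $Z>c_0$ while $\kappa(S_{\G_n})\to\infty$ is preserved, and $\alpha_{j+1,n}$ is the discarded transition generator.

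It then remains to show $\langle\alpha_{1,n},\alpha_{j+2,n},\dots,\alpha_{k,n}\rangle$ is classical for large $n$. For each $i\ge j+2$, the failure at stage $j+1$ together with $|\tr(\beta_{j+1,n})|\le|\tr(\beta_{i,n})|$ forces, via Remark~4.1.A in its $\dis(\mathcal{L},\mathcal{L})$-form and Remark~\ref{rem-2}, that after a final round of transformations $\beta_{i,n}\mapsto\alpha_{1,n}^{m}\beta_{i,n}$ the traces $|\tr(\alpha_{i,n})|\to\infty$ at a rate that, against $D_n\to0$, dominates the collapse of the corresponding gaps; together with the mutual separation of centers furnished by $\kappa(S_{\G_n})\to\infty$, the generators $\alpha_{1,n},\alpha_{j+2,n},\dots,\alpha_{k,n}$ satisfy the hypotheses of Lemma~\ref{classical}. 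A tail generator keeping bounded trace, of which there is at most one by Theorem~\ref{aritical}, is instead absorbed by applying Proposition~\ref{bounded-classical} and Proposition~\ref{k-1} in rank $\le k-1$ (legitimate under the rank induction), using the annulus confinement and Proposition~\ref{non-iso} to produce disjoint circles. Taking $c:=c_0$ completes the argument. The main obstacle is precisely this last step: turning the failure of fixed-point separation into a quantitative lower bound on trace growth fast enough, uniformly in $n$ and against $D_n\to0$, to make both the $\liminf$ condition and the mutual-separation condition of Lemma~\ref{classical} hold simultaneously for every tail generator, and correctly routing the single possibly-bounded-trace tail generator through the rank-$(k-1)$ induction.
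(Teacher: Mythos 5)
Your proposal reproduces the broad shape of the paper's argument (produce a $\kappa(S_{\G_n})\to\infty$ generating set, separate fixed points by a universal constant where possible, and show the remaining generators are classical), but the two places where you actually have to work are asserted rather than proved. First, the opening reduction ``if $|\tr(\alpha_{1,n})|\to\infty$ then every generator has unbounded trace and Lemma \ref{classical} forces $\G_n$ classical'' is not valid: Lemma \ref{classical} needs the fixed points to sit strictly inside the annulus bounded by $C_{|\lambda_{1,n}|^{-1}}$ and $C_{|\lambda_{1,n}|}$, the $\liminf$ conditions of the form $\bigl(|z_{\beta_{i,n},u}|-|\lambda_{1,n}|\bigr)|\tr(\beta_{i,n})|\to\infty$, and the mutual separation condition; unbounded traces alone give none of these. (In Section 7 the analogous ``trivial case'' uses the hypothesis $Z_\G>\tau$, which is exactly what is \emph{not} available in Theorem \ref{2-fixed-point}.)

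Second, and this is the genuine gap, your central claim --- that failure of the greedy separation at stage $j+1$, together with trace monotonicity and a final round of substitutions $\beta_{i,n}\mapsto\alpha_{1,n}^{m}\beta_{i,n}$, forces the tail to satisfy the hypotheses of Lemma \ref{classical} --- is precisely the content the paper has to establish by its case analysis, and nothing in your scheme supplies it. Failure of separation for $\beta_{j+1,n}$ gives no control over the geometry of the later $\beta_{i,n}$; the paper instead splits according to whether the fixed-point gaps collapse or the fixed points drift to $0$ (cases $(E)$ and $(F)$), crossed with $|\lambda_{1,n}|\to 1$, bounded, or $\to\infty$, and in each regime uses a different mechanism: the power choice $1+\tfrac{1}{m_n+1}\le|\lambda_{1,n}|\le 1+\tfrac{1}{m_n}$ to spread fixed points when $|\lambda_{1,n}|\to1$; an iterative \emph{change of base generator} (the $\tilde\alpha_{1,n}$ built from $\phi_{n,m}$ with a stopping time $m_n$) plus the rescaling $\chi_n$ with $\chi_n^2(x)=\tilde\lambda_{1,n}x/\sqrt{\rho_n}$ and conjugations by $\beta_{1,n}$ in subcases (I)--(IV) when $|\lambda_{1,n}|\to\infty$ and the translated fixed points hug the inner circle; and Proposition \ref{non-iso} together with the radius estimate of Proposition \ref{delta-z-r} when a combined generator keeps bounded trace. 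Your greedy construction never changes the base generator $\alpha_{1,n}$, which is unavoidable in case $(F''_2)$, and it never verifies the quantitative $\liminf$ conditions against the shrinking annulus when $|\lambda_{1,n}|\to1$. Your last sentence correctly names this as the main obstacle, but naming it does not close it; in addition, the appeal to Proposition \ref{k-1} ``under the rank induction'' is not legitimate here, since Theorem \ref{2-fixed-point} is not proved by induction on rank and no such inductive framework is set up in your argument.
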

The idea of the proof of Theorem \ref{2-fixed-point} is to use Theorem \ref{t-space} and show by contradiction. The proof involves analysis of collapsing of fixed points given below as Collapsing I and II. First we pick an sequence of generating sets $S_{\Gamma_n}$ of $\Gamma_n$ which is $k-1$ classical. 
 From $S_{\Gamma_n}$ we will examine when $S_{\Gamma_{n}}$ dose not satisfies conditions of Theorem \ref{t-space}. In each situation where $\Gamma_n\not\in\mathfrak{J}_k(\tau)$ we can always obtain new sequence of generating sets such that it will leads to contradictions.\par
The proof will occupy next two sections of Collapsing fixed points I, II.
\begin{proof}
We prove by contradiction. Suppose there exists $\G_n$ a sequence of Schottky groups which is $k-1$ classical such that
for every generating set $<\alpha_{1,n},...,\alpha_{k,n}>$ of $\Gamma_n$  we have $Z_{<\alpha_{1,n},...,\alpha_{k,n}>}\to 0$.\par
For each $n$, by replacing,
\[<\alpha_{1,n},...,\alpha_{k,n}>\quad\text{with}\quad <\alpha_{1,n},\alpha^{m_{2,n}}_{1,n}\alpha_{2,n},...,\alpha^{m_{k,n}}_{1,n}\alpha_{k,n}>,\] 
for sufficiently large $m_{k,n}$ 
if necessary, we can always assume that every $\Gamma_n$ is generated by generators with 
$|\tr(\alpha_{1,n})|\le|\tr(\alpha_{k,n})|$ and $|\tr(\alpha_{k,n})|\ge\frac{\log3}{\mathfrak{D}_n}$. \par
We use $\mathbb{H}^3$ the upper space model. Conjugating with Mobius transformations
it is sufficient to assume $\alpha_{1,n}$ with fixed points $0,\infty$ and multiplier $\lambda_{1,n}$ and
$\alpha_{i,n}, i\ge 2$ also $|\lambda_{1,n}|^{-2}<|\eta_{\alpha_{i,n}}|\le1$. 
Note that as before we denote by $|z_{\alpha_{i,n},l}|\le|z_{\alpha_{i,n},u}|$ as the two fixed points of
$\alpha_{i,n}$ by $z_{\alpha_{i,n},l},z_{\alpha_{i,n},u}.$ Whenever we have
$\alpha_{i,n}$ in matrix form then we always assume $|a_{i,n}|\le|d_{i,n}|$ otherwise just replace $\alpha_{i,n}$ by $\alpha^{-1}_{i,n}$.\par
Note that by Lemma \ref{k-1-form}, we can assume $S_{\G_n}$ satisfies is $k-1$ classical of standard form. Hence
we will always take such $S_{\G_n}$ to be the generators set.\par
We when $Z_{\G_n}\to 0$, which we say \emph{Collapsing fixed points.} This can be divided into possible collapsing types: 
\begin{itemize}
\item 
Collapsing fixed points I: $|z_{\alpha_{i,n},u}-z_{\alpha_{i,n},l}|\to0$ for some $i\ge 2.$  
\item 
Collapsing fixed points II: $z_{\alpha_{i,n},l}\to 0$ for some $i\ge2.$ 
\end{itemize}
After we analyzed both Collapsing fixed points I, II we will then address the general situation. We will address these collapsing fixed points in next two sections. \par
\section{Collapsing fixed points I} 
We show that in this collapsing situation we have further additional I$_1$ and I$_2$ such that: 
\begin{itemize}
\item
I$_1$ implies $Z_{\G_n}>c$ and, 
\item
I$_2$ implies classical.
\end{itemize}
Where Collapsing fixed points I is dichotomized into: 
\begin{itemize}
\item[I$_1$:] 
$\liminf_n|\lambda_{1,n}|=1.$
\item[I$_2$:] 
There some exists
$\lambda>1$ so that $|\lambda_{1,n}|>\lambda.$ 
\end{itemize}

Let us first analyze subclass I$_2$ which is straightforward. 
\subsection{I$_2$}
Since  $|z_{\alpha_{i,n},l}-z_{\alpha_{i,n},u}|\to 1$ and $|\lambda_{1,n}|>\lambda>1,$ we have
$\frac{1}{\lambda}<|z_{\alpha_{i,n},l}|\le|z_{\alpha_{i,n},u}|<\lambda$ for large $n.$ Hence by $<\alpha_{1,n},...,\alpha_{k,n}>$ is $k-1$ classical of standard form  we have it satisfies Lemma \ref{classical} for large $n.$
\subsection{I$_1$}
In this case we will need to perform transformations on $\alpha_{i,n}, i\ge 2$ based on the rate of  $\lambda_{1,n}$ degenerates into $1,$ i.e. $\alpha_{1,n}$ collapsing into an elliptical element. This is done by divid the rate of degeneracy of  $\lambda_{1,n}$ into smaller and smaller intervals, and then transform the
$\alpha_{i,n},i\ge 2$ based on the divided exponents. \par
By take a subsequence if necessary we can just assume that $|\lambda_{1,n}|$ strictly decreasing
to $1$. For large sufficiently enough $n$, we can choose a sequence of positive integers $m_n$ which depends on index $n$ so that 
$1+\frac{1}{m_n+1}\le |\lambda_{1,n}|\le 1+\frac{1}{m_n}$. Set $\zeta_{\alpha_{i,n}}=\frac{a_{i,n}}{c_{i,n}},\eta_{\alpha_{i,n}}=\frac{-d_{i,n}}{c_{i,n}},$
and since $|\tr(\alpha_{i,n})|\to\infty$ and $|\frac{\sqrt{\tr^2(\alpha_{i,n})-4}}{2c_{i,n}}|=|z_{\alpha_{i,n},+}-z_{\alpha_{i,n},-}|\to 0,$
we have $|\tr(\alpha_{i,n})|<|c_{i,n}|$ then from Lemma \ref{fix-trace} and Remark \ref{rem-2} it follows that,
\[\left|z_{ \alpha^{im_n}_{1,n}\alpha_{i,n}\alpha^{2im_n}_{1,n} ,\pm}-\zeta_{i,n}\lambda^{2im_n}_{i,n}\right|\le \rho\frac{|\lambda^{im_n}_{i,n}|}{|\tr(\alpha_{i,n})|}
\le \rho\frac{e^i}{|\tr(\alpha_{i,n})|}\]
\[\left|z_{\alpha^{im_n}_{1,n}\alpha_{i,n}\alpha^{2im_n}_{1,n} ,\mp}-\eta_{i,n}\lambda^{4im_n}_{i,n}\right|\le \rho\frac{|\lambda^{2im_n}_{i,n}|}{|\tr(\alpha_{i,n})|}
\le \rho\frac{e^{2i}}{|\tr(\alpha_{i,n})|}\]
for sufficiently large $n.$
By Lemma \ref{fix-trace} and Remark \ref{rem-2} and the condition that $|z_{\alpha_{i,n},l}-z_{\alpha_{i,n},u}|\to 0$ we get
both $|\zeta_n|,|\eta_n|\to 1$. We have
\[||z_{\alpha^{im_n}_{1,n}\alpha_{i,n}\alpha^{2im_n}_{1,n} ,\pm}|-|\lambda^{2im_n}_{1,n}||+||z_{\alpha^{im_n}_{1,n}\alpha_{i,n}\alpha^{2im_n}_{1,n},\mp}|
-|\lambda_{1,n}^{4im_n}||\to 0.\]
By the definition of $m_n$ we must have $|\lambda^{2im_n}_n|\to e^{2i},|\lambda^{2im_n}_n|\to e^{4i}.$ Hence it follows that 
$|z_{\alpha^{im_n}_{1,n}\alpha_{i,n}\alpha^{2im_n}_{1,n} ,\pm}|\to e^2$ and also $|z_{\alpha^{im_n}_{1,n}\alpha_{i,n}\alpha^{2im_n}_{1,n} ,\mp}|\to 1.$ 
From this it follows that there exists $c>0$ such that,
\[Z_{<\alpha_{1,n},...,\alpha^{im_n}_{1,n}\alpha_{i,n}\alpha^{2im_n}_{1,n},...,\alpha^{km_n}_{1,n}\alpha_{k,n}\alpha^{2km_n}_{1,n} >}>c 
\quad\text{for sufficiently large}\quad n.\]
\section{Collapsing fixed points II}
We show in this collapsing situation we have: 
\begin{itemize}
\item
II$_1$ implies $Z_{\G_n}>c$ and,
\item
 II$_2$ implies classical for $n$.
 \end{itemize}
Where Collapsing fixed points II is further dichotomized into:
\begin{itemize}
\item[II$_1$:] $\liminf_n|\lambda_{1,n}|< \Lambda$ for some $\Lambda>1.$
\item[II$_2$:] $\liminf_n|\lambda_{1,n}|\to\infty$. 
\end{itemize}
We also assume that $|\zeta_{i,n}|\le|\eta_{i,n}|$ as before, otherwise we just simply replace by it's inverse. 
\subsection{II$_1$} 
In this case we prove that there are integers $l_{i,n}$ with,
\[ Z_{<\alpha_{1,n},\alpha^{l_{2,n}}_{1,n}\beta_{2,n},...,\alpha^{l_{k,n}}_{1,n}\beta_{k,n}>}>c\quad\text{for some}\quad  c>0.\]
By taking a subsequence we can just assume we have $|\lambda_{1,n}|\le\Lambda$ for large enough $n$. 
First we consider $\alpha_{2,n}$ and then move to $\alpha_{k,n}$ inductively as follows. \par
Choose positive integers $l_{2,n}$ to be the smallest such that $e^{2}\le|\zeta_{2,n}\lambda^{2l_{2,n}}_{1,n}|.$ Since $|\lambda_{1,n}|\le\Lambda,$
we must have some $\sigma_2>0$ such that $e^2<|\zeta_{2,n}\lambda_{1,n}^{2l_{2,n}}|<\sigma_2.$\par
\begin{lem}\label{claim}  
With above notations, there exists $0<\epsilon<e^2$ and $n>N_\epsilon$ such that 
$e^2-\epsilon<|z_{\alpha^{l_{2,n}}_{1,n}\alpha_{2,n},+}|,|z_{\alpha^{l_{2,n}}_{1,n}\alpha_{2,n},-}|<\sigma_2+\epsilon$.
\end{lem}
To show this Lemma \ref{claim} in the following we can use Remark \ref{4B}.2.B of Lemma \ref{fix-trace}. 
\begin{proof}
Note that since $|z_{\alpha_{2,n},-}-z_{\alpha_{2,n},+}|<1+\epsilon$ for some $\epsilon>0$ and large $n,$ and also $|\tr(\alpha_{2,n})|\to\infty,$ we have
$|c_{2,n}|\to\infty.$ By Remark \ref{4B}.2.B we have $|z_{\alpha_{2,n},u}-\eta_{2,n}|\to 0,$ hence for large $n$ we have $\Lambda^{-2}<|\eta_{2,n}|<1+\delta.$\par
Let us show that $|z_{ \alpha^{l_{2,n}}_{1,n}\alpha_{2,n} ,+}-z_{\alpha^{l_{2,n}}_{1,n}\alpha_{2,n},-}|\not\to\infty.$ Suppose this is not true.
Denote by $\rho_{2,n}$ the center of the circle having 
$z_{\alpha^{l_{2,n}}_{1,n}\alpha_{2,n},+}$ and $z_{\alpha^{l_{2,n}}_{1,n}\alpha_{2,n},-}$ as pair of antipodal points on the boundary sphere. By 
$|\zeta_{\alpha^{l_{2,n}}_{1,n}\alpha_{2,n}}+\eta_{\alpha^{l_{2,n}}_{1,n}\alpha_{2,n}}|<\sigma_2+1+\epsilon'$ for some $\epsilon'>0$ and large $n,$ 
and $\rho_{2,n}=\frac{z_{\alpha^{l_{2,n}}_{1,n}\alpha_{2,n},+}+z_{\alpha^{l_{2,n}}_{1,n}\alpha_{2,n},-}}{2},$ and 
$z_{\alpha^{l_{2,n}}_{1,n}\alpha_{2,n},+}+z_{\alpha^{l_{2,n}}_{1,n}\alpha_{2,n},-}=\zeta_{\alpha^{l_{2,n}}_{1,n}\alpha_{2,n}}+\eta_{\alpha^{l_{2,n}}_{1,n}\alpha_{2,n}}$
we get $|\rho_{2,n}-\sigma'_2|<\kappa$ for some positive number $\kappa>0$ and $\sigma'_2=\sigma+1+\epsilon'.$ Since
$|\rho_{2,n}-\sigma'_2|\le|\rho_{2,n}|+|\sigma'_2|<\frac{\sigma_2+1+\epsilon'}{2}+\sigma_2+1+\epsilon'$ we can take $\kappa_2=\frac{3}{2}(\sigma+1+\epsilon').$
Hence we have that 
$\dis(\mathcal{L}_{\alpha_{1,n}},\mathcal{L}_{\alpha^{l_{2,n}}_{1,n}\alpha_{2,n}})<\delta_2$ for some $\delta_2>0.$ By Remark \ref{4A}.1.A we have 
$|\tr(\alpha^{l_{2,n}}_{1,n}\alpha_{2,n})|\to\infty,$ 
and $|\zeta_{\alpha^{l_{2,n}}_{1,n}\alpha_{2,n}}-\eta_{\alpha^{l_{2,n}}_{1,n}\alpha_{2,n}}|=\frac{|\tr(\alpha^{l_{2,n}}_{1,n}\alpha_{2,n})|}
{|c_{2,n}\lambda^{-l_{2,n}}_{1,n}|},$ we get
$|\tr(\alpha^{l_{2,n}}_{1,n}\alpha_{2,n})|\asymp|c_{2,n}\lambda^{-l_{2,n}}_{1,n}|.$ But this imples that 
$|z_{\alpha^{l_{2,n}}_{1,n}\alpha_{2,n},+}-z_{\alpha^{l_{2,n}}_{1,n}\alpha_{2,n},-}|<C$ for some $C>0,$ hence a contradiction.\par
Note that if $|z_{\alpha^{l_{2,n}}_{1,n}\alpha_{2,n},+}-z_{\alpha^{l_{2,n}}_{1,n}\alpha_{2,n},-}|\to 0$ then
$|z_{\alpha^{l_{2,n}}_{1,n}\alpha_{2,n},\pm}|\to \frac{1}{2}|\zeta_{\alpha^{l_{2,n}}_{1,n}\alpha_{2,n}}+\eta_{\alpha^{l_{2,n}}_{1,n}\alpha_{2,n}}|.$
Since $\frac{1}{2}(e^2-1-\delta)<\frac{1}{2}|\zeta_{ \alpha^{l_{2,n}}_{1,n}\alpha_{2,n} }+\eta_{\alpha^{l_{2,n}}_{1,n}\alpha_{2,n}}|<\frac{1}{2}\sigma_2'$ for 
large $n.$  
This implies that $\frac{1}{2}(e^2-1-\delta_2)<|z_{\alpha^{l_{2,n}}_{1,n}\alpha_{2,n},\pm}|<\frac{1}{2}\sigma'_2$ for large $n.$\par
Now if $c<|z_{,+}-z_{\alpha^{l_{2,n}}_{1,n}\alpha_{2,n},-}|<c'$ for some $c,c'>0,$ then it follows from Remark \ref{4A}.1.A we get
$|\tr(\alpha^{l_{2,n}}_{1,n}\alpha_{2,n})|\to\infty$ and it imples that $|c_{2,n}\lambda_{1,n}^{-l_{2,n}}|\to\infty.$ So by Remark \ref{4B}.2.B we get
$\{z_{\alpha^{l_{2,n}}_{1,n}\alpha_{2,n},+},z_{ \alpha^{l_{2,n}}_{1,n}\alpha_{2,n} ,-}\}\to\{\zeta_{\alpha^{l_{2,n}}_{1,n}\alpha_{2,n}},\eta_{\alpha^{l_{2,n}}_{1,n}\alpha_{2,n}}\}$ 
which gives the lemma.
\end{proof}
Next we proceed down to generator $\alpha_{i+1,n}$ and do the same above procedure as we have done for $\alpha_{i,n}$ with $i\ge 2.$ More precisely, we choose
$l_{i+1,n}$ to be the smallest integer such that $e^{2(i+1)}+|z_{\alpha^{l_{i,n}}_{1,n}\alpha_{2,n},u}|\le|\zeta_{i+1,n}\lambda^{2l_{i+1,n}}_{1,n}|.$
Then it follows from above proof we have,
\[e^{2(i+1)}+|z_{\alpha^{l_{i,n}}_{1,n}\alpha_{2,n},u}|-\epsilon<|z_{\alpha^{l_{i+1,n}}_{1,n}\alpha_{i+1,n},+}|,|z_{\alpha^{l_{i+1,n}}_{1,n}}|\alpha_{i+1,n}|<\sigma_{i+1}+\epsilon, i\ge 2.\]
Now for these transformed generators we have the following two possibilities according to the collapsing or the non-collapsing fixed points
 respectively as,
\begin{itemize}
\item 
II$_{1a}:$ $\liminf\left|z_{\alpha^{l_{2,n}}_{1,n}\alpha_{2,n},+}-z_{\alpha^{l_{2,n}}_{1,n}\alpha_{2,n},-}\right|\to 0.$ 
\item
II$_{1b}:$ $\liminf_n\left|z_{\alpha^{l_{2,n}}_{1,n}\alpha_{2,n},+}-z_{\alpha^{l_{2,n}}_{1,n}\alpha_{2,n},-}\right|>0$. 
\end{itemize}
In the case II$_{1b}$, we have $Z_{<\alpha_{1,n},\alpha^{l_{2,n}}_{1,n}\beta_{2,n},...,\alpha^{l_{k,n}}_{1,n}\alpha_{k,n}>}>c$, for some $c>0$ and large $n$.\par
Hence suppose we have II$_{1a}.$ Passing to a subsequence if necessary we can assume that
$|z_{\alpha^{l_{i+1,n}}_{1,n}\alpha_{i+1,n} ,+}-z_{\alpha^{l_{i+1,n}}_{1,n}\alpha_{i+1,n} ,-}|\to 0.$\par
If $|\lambda_{1,n}|\to 1$ then choose positive integers $m_{i,n},m'_{i,n}$ as defined in Collapsing fixed points I$_1,$ such that 
$e^{2(i)}+|z_{\alpha^{l_{k,n}}_{1,n}\alpha_{k,n},u}| \le|\eta_{i,n}\lambda^{2l_{i,n}+2m_{i,n}}_{1,n}|<e^{M_{i,n}},$ 
$e^{M_{i,n}+2}\le|\zeta_{i,n}\lambda^{2l_{i,n}+2m_{i,n}}_{1,n}|<e^{M'_{i,n}},$
and also 
\[\kappa_{i,n}<\left|z_{ \alpha^{l_{i,n}+m_{i,n}}_{1,n}\alpha_{i,n} ,+}-z_{\alpha^{l_{i,n}+m_{i,n}}_{1,n}\alpha_{i,n} ,-}\right|<\kappa'_{i,n},\]
for some $0<\kappa_{i,n},\kappa'_{i,n}, M_{i,n},M'_{i,n}.$ By Remark \ref{4A}.1.A, $|\tr(\alpha^{l_{i,n}+m_{i,n}}_{1,n}\alpha_{i+1,n})|\to\infty.$ Hence by
Remark \ref{4B}.2.B we have 
\[\{z_{\alpha^{l_{i,n}+m_{i,n}}_{1,n}\alpha_{i,n},+},z_{\alpha^{l_{i,n}+m_{i,n}}_{1,n}\alpha_{i,n},-}\}
\to\{\zeta_{\alpha^{l_{i,n}+m_{i,n}}_{1,n}\alpha_{i,n} },\eta_{\alpha^{\alpha^{l_{i,n}+m_{i,n}}_{1,n}\alpha_{i,n}}}\}.\]
This implies that 
$e^{2i}+|z_{\alpha^{l_{k,n}}_{1,n}\alpha_{k,n},u}|\le|z_{\alpha^{l_{i,n}+m_{i,n}}_{1,n}\alpha_{i,n} ,\pm}|<e^{M_{i,n}}$ 
and $e^{M_{i,n}+2}<|z_{\alpha^{l_{i,n}+m_{i,n}}_{1,n}\alpha_{i,n} ,\mp}|<e^{M'_{i,n}}.$
Hence there exists $c>0$, such that
\[Z_{<\alpha_{1,n},...,\alpha^{l_{i,n}+m_{i,n}}_{1,n}\alpha_{i,n},...,\alpha^{l_{k,n}}_{1,n}\alpha_{k,n}>}>c\] 
for large $n.$\par
If $\lambda_{1,n}|>c>0$ then we do the same as above and treat it as special case by chose appropriate $\tilde{m}_{i,n}.$ Then 
$\delta_1<|z_{\alpha^{l_{i,n}+\tilde{m}_{i,n}}_{1,n}\alpha_{i,n},+}-z_{\alpha^{l_{i,n}+\tilde{m}_{i,n}}_{1,n}\alpha_{i,n},-}|<\delta_2$  
for some $0<\delta_1,\delta_2.$ And it follows from Remark \ref{4A}.1.A and Remark \ref{4B}.2.B we have that,
\[Z_{<\alpha_{1,n},...,\alpha^{l_{i,n}+\tilde{m}_{i,n}}_{1,n}\alpha_{i,n},...,\alpha^{l_{k,n}}_{1,n}\alpha_{k,n}>}>c\quad\text{for large}\quad n.\]
\subsection{II$_2$} 
Take a subsequence of $\alpha_{1,n}$ if necessary we can assume that $|\lambda_{1,n}|$ is strictly increasing to.
Let $l_{i,n}\ge 0$ be a sequence of integers such that $|\zeta_{i,n}\lambda^{2l_{i,n}}_{1,n}|\le 1$.
If $\limsup |\zeta_{\alpha_{i,n}}\lambda^{2l_{i,n}}_{1,n}|=1$ and $\limsup|\zeta_{\alpha_{i,n}}\lambda^{2l_{i,n}}_{1,n}-\eta_{\alpha_{i,n}}|\not=0$ then 
we have a subsequence
such that, 
\[\liminf_j Z_{<\alpha_{1,n_j},..., \alpha^{l_{i,n_j}+2(i-1)}_{n_j} \alpha_{i,n_j}\alpha^{-2(i-1)}_{n_j}  ,...,
\alpha^{l_{k,n_j}+2(k-1)}_{n_j}\alpha_{k,n_j}\alpha^{-2(k-1)}_{n_j} >}>0.\]\par
Suppose $\limsup \zeta_{\alpha_{i,n}}\lambda^{2l_{i,n}}_{1,n}=1$ then, denote by  $<\alpha_{1,n_j},...,\alpha^{l_{k,n_j}}_{1,n_j}\alpha_{k,n_j}>$ the subsequence 
of $<\alpha_{1,n},...,\alpha^{l_{k,n}}_{1,n}\alpha_{k,n}>$ with $\lim_j\zeta_{\alpha_{i,n_j}}\lambda^{2l_{i,n_j}}_{1,n_j}=1$. \par
If $\limsup|\tr(\alpha^{l_{1,n_j}}_{i,n_j}\alpha_{i,n_j})|=\infty$ then by passing to a subsequence if necessary, for large
$j$, $\alpha^{l_{i,n_j}}_{1,n_j}\alpha_{i,n_j}$ will have disjointed isometric circles. 
Since $|\tr(\alpha_{i,n_j})|\to\infty$, $z_{\alpha_{i,n_j},l}\to 0,$ so $|z_{\alpha_{i,n_j},+}-z_{\alpha_{i,n_j},-}|<c$ for $c>0,$ we have 
$|c_{i,n_j}|\to\infty.$
By Remark \ref{4B}.2.B, $\lim_j\min\{|\eta_{\alpha_{i,n_j}}-z_{\alpha_{i,n_j},-}|,|\eta_{\alpha_{i,n_j}}-z_{\alpha_{i,n_j},+}|\}\to 0$. 
Since $\zeta_{\alpha^{l_{i,n_j}}_{1,n_j}\alpha_{i,n_j}}\to 1$ and $|\tr(\alpha^{l_{1,n_j}}_{1,n_j}\alpha_{i,n_j})|\to\infty$ we have,
\[\left|\zeta_{\alpha^{l_{i,n_j}}_{1,n_j}\alpha_{i,n_j}}-\eta_{\alpha^{l_{i,n_j}}_{1,n_j}\alpha_{1,n_j}}\right|=\frac{\left|\tr(\alpha^{l_{i,n_j}}_{1,n_j}\alpha_{1,n_j})\right|}
{|\lambda^{-l_{i,n_j}}_{1,n_j}c_{i,n_j}|}\to 0\]
and, $|\lambda^{-l_{i,n_j}}_{1,n_j}c_{i,n_j}|\to\infty.$ Also 
\[\left|z_{\alpha^{l_{i,n_j}}_{1,n_j}\alpha_{i,n_j},+}-z_{\alpha^{l_{i,n_j}}_{1,n_j}\alpha_{i,n_j},-}\right|=\frac{\left|\sqrt{\tr^2(\alpha^{l_{i,n_j}}_{1,n_j}\alpha_{i,n_j})-4}\right|}
{2|\lambda^{-l_{i,n_j}}_{1,n_j}c_{i,n_j}|}\to 0.\]
Hence it follows from Remark \ref{4B}.2.B, there exists a $\kappa>0$ such that for large $j$ we have,
 $\kappa^{-1}<|z_{\alpha^{l_{i,n_j}}_{1,n_j}\alpha_{i,n_j},l}|\le|z_{\alpha^{l_{i,n_j}}_{1,n_j}\alpha_{i,n_j},u}|<\kappa$.
And since $|\lambda_{1,n_j}|\to\infty$,
we can choose Mobius transformations $\psi_i$ such that 
\[S_j=\psi_j<\alpha_{1,n_j},...,\alpha^{l_{i,n_j}}_{1,n_j}\alpha_{i,n_j},...,\alpha^{l_{i,n_j}}_{1,n_j}\alpha_{i,n_j}>\psi^{-1}_j\] 
satisfies Lemma \ref{classical}.
And since it is $k-1$ classical of standard form, we have $S_j$ generates classical Schottky groups for large $j.$  \par
Let us arrange $\alpha_{i,n},i\ge 2$ such that $|\tr(\alpha_{i+1,n})|\le|\tr(\alpha_{i,n})|.$
If we have that $\limsup|\tr(\alpha^{l_{k,n_j}}_{1,n_j}\alpha_{k,n_j})|<\infty$ then set $\phi_j$ be Mobius transformations
so that $\phi_j\alpha^{l_{k,n_j}}_{1,n_j}\alpha_{k,n_j}\phi^{-1}_{j}$ have fixed points $0,\infty$ and
fixed points $z_{\phi_{j}\alpha_{1,n_j}\phi^{-1}_j,\pm}$ be $z_{\alpha^{l_{k,n_j}}\alpha_{k,n_j},\pm}.$ Then it follows that 
$z_{\phi_j\alpha_{i,n_j}\phi^{-1}_j,l}\to 1$. Hence we can reduced this case to Collapsing fixed points I which has already been considered.\par
If $\limsup|\zeta_{\alpha_{i,n}}\lambda^{2l_{i,n}}_{1,n}|<1$, then we have:
\begin{itemize}
\item
II$_{2a}:$ $\liminf|\zeta_{\alpha_{i,n}}\lambda^{2l_{i,n}+2}_{1,n}|=1.$ 
\item
II$_{2b}:$ $\liminf|\zeta_{\alpha_{i,n}}\lambda^{2l_{i,n}+2}_{1,n}|>1.$
\end{itemize}

\subsubsection{II$_{2a}$} 
 We arrange so that $<\alpha_{1,n_j},...,\alpha^{l_{k,n_j}}_{1,n_j}\alpha_{k,n_j}>$ be a subsequence of
such that $\lim_j|\zeta_{\alpha_{k,n_j}}\lambda^{2l_{k,n_j}+2}_{1,n_j}|\to 1$. 
If $\sup_j|\tr(\alpha^{l_{k,n_j}+1}_{1,n_j}\alpha_{k,n_j})|<\infty$, then we conjugate 
$\alpha_{1,n_j},\alpha^{l_{k,n_j}}_{1,n_j}\alpha_{k,n_j}$ to
$\hat{\alpha}_{k,j}=\phi_j\alpha_{1,n_j}\phi^{-1}_j,$ and $\hat{\alpha}_{1,j}=\phi_j\alpha^{l_{k,n_j}+1}_{1,n_j}\alpha_{1,n_j}\phi^{-1}_j$ with $\hat{\alpha}_{1,i}$ 
have fixed points $0,\infty$ and $\hat{\alpha}_{k,j}$ have $z_{\alpha_{1,n_j}^{l_{k,n_j}+1}\alpha_{k,n_j},\pm}.$ 
Since $\sup_j|\hat{\lambda}_{1,j}|<\infty$, it follows that, if $z_{\hat{\alpha}_{i,j},l}\to 1$ then 
$<\hat{\alpha}_{1,j},...,\hat{\alpha}_{k,j}>$, falls under Collapsing fixed points I, and if
$z_{\hat{\beta}_i,l}\to 0$ then $<\hat{\alpha}_{1,j},...,\hat{\alpha}_{k,j}>$ falls under Collapsing fixed points II$_1$ Otherwise there exists $\epsilon>0$ such that
$\epsilon<|z_{\hat{\alpha}_{i,j},l}|<1-\epsilon,$ hence as in Collapsing fixed points I we can choose integers $N_{i,n}$ such that
$Z_{<\hat{\alpha}_{1,j},...,\hat{\alpha}^{N_{i,j}}_{1,j}\hat{\alpha}_{i,j}\hat{\alpha}_{1,j}^{-N_{i,n}},...,
\hat{\alpha}^{N_{k,j}}_{1,j}\hat{\alpha}_{k,j}\hat{\alpha}_{1,j}^{-N_{k,n}}>}>c$ for some $c>0.$\par
Now if we assume that $\sup_j|\tr(\alpha^{l_{i,n_j}+1}_{1,n_j}\alpha_{i,n_j})|=\infty$ then for large $j,$
since the radius of isometric circles are
$\mathfrak{R}_{\alpha^{l_{i,n_j}+1}_{1,n_j}\alpha_{i,n_j}}=\frac{\left|z_{\alpha^{l_{i,n_j}+1}_{1,n_j}\alpha_{i,n_j},u}-z_{\alpha^{l_{i,n_j}+1}_{i,n_j}\alpha_{i,n_j},l}\right|}
{|\tr(\alpha^{l_{i,n_j}+1}_{1,n_j}\alpha_{i,n_j})|}$ and the distance between the centers is 
$|\zeta_{\alpha^{l_{i,n_j}+1}_{1,n_j}\alpha_{1,n_j}}-\eta_{\alpha^{l_{i,n_j}+1}_{1,n_j}\alpha_{i,n_j}}|=$
$\frac{|\tr(\alpha^{l_{i,n_j}+1}_{1,n_j}\alpha_{i,n_j})|}{|c_{i,n_j}\lambda_{1,n_j}^{-l_{i,n_j}-1}|},$ and by
$|z_{\alpha^{l_{i,n_j}+1}_{1,n_j}\alpha_{i,n_j},u}-z_{\alpha^{l_{i,n_j}+1}_{1,n_j}\alpha_{i,n_j},l}|=$
$\frac{|\sqrt{\tr^2(\alpha^{l_{i,n_j}+1}_{1,n_j}\alpha_{i,n_j})-4}|}{|2c_{i,n_j}\lambda_{1,n_j}^{-l_{i,n_j}-1}|}$
we have,
\[\lim_j\frac{|\zeta_{\alpha^{l_{i,n_j}+1}_{1,n_j}\alpha_{i,n_j}}-\eta_{\alpha^{l_{i,n_j}+1}_{1,n_j}\alpha_{i,n_j}}|}{\mathfrak{R}_{\alpha^{l_{i,n_j}+1}_{1,n_j}\alpha_{i,n_j}}}
=\lim_i\frac{2|\tr(\alpha^{l_{i,n_j}+1}_{1,n_j}\alpha_{i,n_j})|^2}{|\sqrt{\tr^2(\alpha^{l_{i,n_j}+1}_{1,n_j}\alpha_{i,n_j})-4}|}>\delta
|\tr(\alpha^{l_{i,n_j}+1}_{1,n_j}\alpha_{i,n_j})|\] for some $\delta>0.$ Hence
$|\zeta_{\alpha^{l_{i,n_j}+1}_{1,n_j}\alpha_{i,n_j}}-\eta_{\alpha^{l_{i,n_j}+1}_{1,n_j}\alpha_{i,n_j}}|>2\mathfrak{R}_{\alpha^{l_{i,n_j}+1}_{1,n_j}\alpha_{i,n_j}}$
for large $j.$
From this it implies that $\alpha^{l_{i,n_j}+1}_{1,n_j}\alpha_{i,n_j}$ have disjointed isometric circles for large $j.$ 
By Lemma \ref{fix-trace} we have  $\eta_{\alpha_{i,n_j}}\to 1.$ And since $\eta_{\alpha^{l_{i,n_j}+1}_{1,n_i}\alpha_{i,n_j}}=
\eta_{\alpha_{i,n_j}},$ implies that $\eta_{\alpha^{l_{i,n_j}+1}_{1,n_j}\alpha_{i,n_j}}\to 1.$ 
Note that if $\inf_j|\zeta_{\alpha^{l_{i,n_j}+1}_{1,n_j}\alpha_{i,n_j}}-1|>0$ and $|\zeta_{\alpha^{l_{i,n_j}+1}_{1,n_j}\alpha_{i,n_j}}|\to 1,$ then
by $|\tr(\alpha^{l_{i,n_j}+1}_{1,n_j}\alpha_{i,n_j})|\to\infty$ we have 
$|1-z_{\alpha^{l_{i,n_j}+1}_{1,n_j}\alpha_{i,n_j},l}|,$ 
$|\zeta_{\alpha^{l_{i,n_j}+1}_{1,n_j}\alpha_{i,n_j}}-z_{\alpha^{l_{i,n_j}+1}_{1,n_j}\alpha_{i,n_j},u}|\to 0,$ and 
$\inf_j|z_{\alpha^{l_{i,n_j}+1}_{1,n_j}\alpha_{i,n_j},u}-z_{\alpha^{l_{i,n_j}+1}_{1,n_j}\alpha_{i,n_j},l}|>0.$ Hence for large $j$ there exits
$\epsilon>0$ such that 
\[1-\epsilon<\left|z_{\alpha^{l_{i,n_j}+1}_{1,n_j}\alpha_{i,n_j},l}\right|\le\left|z_{\alpha^{l_{i,n_j}+1}_{1,n_j}\alpha_{i,n_j},u}\right|<1+\epsilon.\]
Hence we can choose integers $N_{i,n}$ such that,
\[\inf_jZ_{<\alpha_{1,n_j},..., \alpha^{l_{i,n_i}+N_{i,n_j}+1}_{1,n_j} \alpha_{i,n_j}\alpha^{-N_{i,n_j}}_{1,n_j} ,..., 
\alpha^{l_{k,n_i}+N_{k,n_j}+1}_{1,n_j} \alpha_{k,n_j}\alpha^{-N_{k,n_j}}_{1,n_j}  >}>0.\]
Now suppose that $\zeta_{\alpha^{l_{i,n_j}+1}_{1,n_j}\alpha_{i,n_j}}\to 1.$ Then
we have 
\[|\zeta_{\alpha^{l_{i,n_j}+1}_{1,n_j}\alpha_{i,n_j}}-\eta_{\alpha^{l_{i,n_j}+1}_{1,n_j}\alpha_{i,n_j}}|\to 0.\] 
Hence 
$\frac{|\tr(\alpha^{l_{i,n_j}+1}_{1,n_j}\alpha_{i,n_j})|}{|c_{i,n_j}\lambda_{1,n_j}^{-l_{i,n_j}-1}|}\to 0.$ Since 
$|\tr(\alpha^{l_{i,n_j}+1}_{1,n_j}\alpha_{i,n_j})|\to\infty,$ which gives
\[|z_{\alpha^{k_{n_i}+1}_{n_i}\beta_{n_i},u}-z_{\alpha^{k_{n_i}+1}_{n_i}\beta_{n_i},l}|=
\frac{\sqrt{\tr^2(\alpha^{l_{n_i}+1}_{n_i}\beta_{n_i})-4}}{|2c_{n_i}\lambda_{n_i}^{-l_{n_i}-1}|}\to 0.\]
Therefore the distance between the centers of these isometric circles decreases to $0$ and radius $\mathfrak{R}_{\alpha^{l_{i,n_j}+1}_{1,n_j}\alpha_{i,n_j}}\to 0.$
Since $\inf_j|\lambda_{1,n_j}|^2>c>1,$ we have for large $j$ that the isometric circles of $\alpha^{l_{i,n_j}+1}_{1,n_j}\alpha_{i,n_j}$ disjointed and
lies between $c^{-1}$ and $c.$ In particular, $c^{-1}<|z_{ \alpha^{l_{i,n_j}+1}_{1,n_j}\alpha_{i,n_j} ,l}|\le|z_{\alpha^{l_{i,n_j}+1}_{1,n_j}\alpha_{i,n_j},u}|
<c.$ Set, 
\[S_j=<\alpha_{1,n_j},...,\alpha^{l_{i,n_j}+1}_{1,n_j}\alpha_{i,n_j},..., \alpha^{l_{k,n_j}+1}_{1,n_j}\alpha_{k,n_j}>.\]
$S_j$ satisfies Lemma \ref{classical}, hence classical.\par

\subsubsection{II$_{2b}$} 

 First we define a new sequence of $<\tilde{\alpha}_{1,n},...,\tilde{\alpha}_{k,n}>$ as follows:
Consider $S'_n=<\alpha_{1,n},..., \alpha^{l_{i,n}}_{1,n}\alpha_{i,n} ,...,\alpha^{l_{k,n}}_{1,n}\alpha_{k,n}>$. 
We arrange $< \alpha^{l_{2,n}}_{1,n}\alpha_{2,n} ,..., \alpha^{l_{k,n}}_{1,n}\alpha_{k,n} >$ so that 
$|\tr(\alpha^{l_{i+1,n}}_{1,n}\alpha_{i+1,n})|\le|\tr(\alpha^{l_{i,n}}_{1,n}\alpha_{i,n}).$ 
If $|\tr(\alpha^{l_{k,n}}_{1,n}\alpha_{k,n})|\ge|\tr(\alpha_{1,n})|$,
then set $\tilde{\alpha}_{i,n}=\alpha_{i,n}, i\ge 1$. Otherwise, let $\phi_n$ be the Mobius map 
so that $\phi_n\alpha^{l_{k,n}}_{1,n}\alpha_{k,n}\phi^{-1}_n$ 
have fixed points $0,\infty$, and $\phi_n\alpha_{1,n}\phi^{-1}_n$ have $z_{\phi_n\alpha_{1,n}\phi^{-1}_n,\pm}=z_{\alpha_{k,n},\pm}.$
Set $\alpha_{1,n;1}=\phi_n\alpha^{l_{k,n}}_{1,n}\alpha_{k,n}\phi^{-1}_n, \alpha_{i,n;1}=\phi_n\alpha_{i,n}\phi^{-1}_n, i\not=k.$
We define integer $l_{i,n;1}$ with respect to $<\alpha_{1,n;1},...,\alpha_{k,n;1}>$ the same way as we defined $l_{i,n}$ before. \par
Suppose that $|\tr(\alpha^{l_{k,n;1}}_{1,n;1}\alpha_{k,n;1})|\ge|\tr(\alpha_{1,n;1})|$ then 
we set $\tilde{\alpha}_{i,n}=\alpha_{i,n;1},i\ge 2$ and $\tilde{\alpha}_{1,n}=\alpha_{1,n;1}$. Otherwise, we repeat this construction
to get a sequence $<\alpha_{1,n;m},...,\alpha_{k,n;m}>.$
By construction for a each $n$, either there exists a $m$ such that $|\tr(\alpha^{l_{k,n;m}}_{1,n;m}\alpha_{k,n;m})|\ge|\tr(\alpha_{1,n;m})|$ 
or we have $|\tr(\alpha^{l_{k,n;m}}_{1,n;m}\alpha_{k,n;m})|\ge|\tr(\alpha_{1,n;m})|$ for all $m.$ Assume the latter, since
$\alpha_{1,n,m+1}=\phi_{n,m}\alpha^{l_{k,n;m}}_{1,n;m}\alpha_{k,n;m}\phi^{-1}_{n,m}$ we have 
$|\tr(\alpha_{1,n;m+1})|<|\tr(\alpha_{1,n;m})|$ for all $m.$ If $\lim_m|\tr(\alpha_{1,n;m})|=0$ then take $m_n$ to be the first integer $m$ with
with $|\tr(\alpha_{1,n;m})|<\frac{1}{n}.$ If $\lim_m|\tr(\alpha_{1,n;m})| >0$ then take $m_n$ to be the first integer $m$ with 
$|\tr(\alpha_{1,n;m+1})|>|\tr(\alpha_{1,n;m})|-\frac{1}{n}.$ If the former holds, we set $m_n$ to be the first integer $m$ with
$|\tr(\alpha^{l_{k,n;m}}_{1,n;m}\alpha_{k,n;m})|\ge|\tr(\alpha_{1,n;m})|.$ Hence there exists a $m_n$, such that either
$|\tr(\alpha^{l_{k,n;m_n}}_{1,n;m_n}\alpha_{k,n;m_n})|>|\tr(\alpha_{1,n;m_n})|-\frac{1}{n}$, or 
$|\tr(\alpha_{1,n;m_n})|<\frac{1}{n}$. Note that by construction arrangement, we have 
$|\tr(\alpha^{l_{i,n;m}}_{1,n;m}\alpha_{i,n;m})|\ge|\tr(\alpha^{l_{i+1,n;m}}_{1,n;m}\alpha_{i+1,n;m})|, i\ge 2.$ This also implies that
$|\tr(\alpha^{l_{i,n;m_n}}_{1,n;m_n}\alpha_{i,n;m_n})|>|\tr(\alpha_{1,n;m_n})|-\frac{1}{n}, i\ge 2$ whenever we have
$|\tr(\alpha^{l_{k,n;m_n}}_{1,n;m_n}\alpha_{k,n;m_n})|>|\tr(\alpha_{1,n;m_n})|-\frac{1}{n}.$ 
We define $\tilde{\alpha}_{1,n}=\alpha_{1,n;m_n},\tilde{\alpha}_{i,n}=\alpha_{i,n,m_n}, i\ge 2.$ 
For $S'_n=<\alpha_{1,n;m},...,\alpha^{l_{k,n;m}}_{1,n;m}\alpha_{k,n;m}>,$ note that $S'_n$ is also of $k-1$ standard form.
\par

First let us consider $<\tilde{\alpha}_{1,n},...,\tilde{\alpha}_{k,n}>$. If $\liminf_n|\tr(\tilde{\alpha}_{1,n})|<\infty$, we choose a subsequence
with $|\tr(\tilde{\alpha}_{1,n_i})|<c$ for all large $i$ and some $c>0$. Let $p_{i,j},2\le i\le k$ be a sequence of least positive integers 
such that $|\tr(\tilde{\alpha}^{p_{i,j}}_{1,n_j}\tilde{\alpha}_{i,n_j})|>\frac{1}{\mathfrak{D}_{n_j}}$. We conjugate $\tilde{\alpha}^{p_{i,j}}_{1,n_j}\tilde{\alpha}_{i,n_j}$
by $\psi_j$ that fixes $0,\infty$ and $\max_{2\le i\le k}\{z_{\psi_j\tilde{\alpha}^{p_{i,j}}_{1,n_j}\tilde{\alpha}_{i,n_j}\psi^{-1}_j,u}\}=1.$
Set $\bar{\alpha}_{1,j}=\psi_j\tilde{\alpha}_{1,n_j}\psi^{-1}_j, \bar{\alpha}_{i,j}=\psi_j\tilde{\alpha}^{p_{i,j}}_{1,n_j}\tilde{\alpha}_{i,n_j}\psi^{-1}_j.$
By construction, if $z_{\bar{\alpha}_{i,j},l}\to 0$ then $<\bar{\alpha}_{1,j},...,\bar{\alpha}_{k,j}>$  satisfies Collapsing fixed points II$_1$ and if
$z_{\bar{\alpha}_{i,j},l}\to 1$ then $<\bar{\alpha}_{1,j},...,\bar{\alpha}_{k,j}>$  satisfies Collapsing fixed points II. Otherwise there exists $\epsilon>0$
such that $\epsilon<|z_{\bar{\alpha}_{i,j},l}|<1-\epsilon$ then we can chose $N_i>0$ integers such that 
$Z_{<\bar{\alpha}_{1,j},...,\bar{\alpha}^{N_i}_{1,j}\bar{\alpha}_{i,j}\bar{\alpha}^{-N_i}_{1,j},...,\bar{\alpha}^{N_k}_{1,j}\bar{\alpha}_{k,j}\bar{\alpha}^{-N_k}_{1,j} >}>c$ for some $c>0.$
Hence in either case, we are done. \par
Now suppose that $\liminf_n|\tr(\tilde{\alpha}_{1,n})|=\infty$. Since $|\tr(\tilde{\alpha}_{i,n})|\ge|\tr(\tilde{\alpha}_{1,n})|,i\ge 2$
it is sufficient to assume that $<\tilde{\alpha}_{1,n},...,\tilde{\alpha}_{k,n}>$ satisfies case II$_{2b}$, otherwise
we are done. We define $\tilde{l}_{i,n}=l_{i,n;m_n}, i\ge 2.$ \par
Set $\beta_{1,n}=\tilde{\alpha}_{1,n}, \beta_{i,n}= \tilde{\alpha}^{\tilde{l}_{i,n}}_{1,n}\tilde{\alpha}_{i,n} .$
Since $|\tr(\tilde{\alpha}_{i,n})|\ge|\tr(\tilde{\alpha}_{1,n})|$ and 
$|z_{\tilde{\alpha}_{i,n},-}-z_{\tilde{\alpha}_{i,n},+}|\le 1+\delta_n$ with $\delta_n\to 0$ (this follows from that 
$|\lambda_{1,n}|^{-2}\le|z_{\tilde{\alpha}_{i,n},u}|\le 1, z_{\tilde{\alpha}_{1,n},l}\to 0$), 
it follows from Lemma \ref{fix-trace} with Remark \ref{rem-2} and the inequality 
$|\tr(\alpha^{l_{i,n;m_n}}_{1,n;m_n}\alpha_{i,n;m_n})|>|\tr(\alpha_{1,n;m_n})|-\frac{1}{n},$
which implies $\lim_n\frac{|\tr(\beta_{i,n})|}{|\tilde{\lambda}_{1,n}|}>\epsilon$ for $\epsilon>0$ where
$\tilde{\lambda}_{1,n}$ is the multiplier of $\beta_{1,n},$ and 
$|\eta_{\beta_{i,n}}-\zeta_{\beta_{i,n}}|<\delta|\tilde{\lambda}_{1,n}|^{-2},$ for some $\delta>0.$ 
Since $\eta_{\beta_{i,n}\beta_{1,n}}=\eta_{\beta_{i,n}}\tilde{\lambda}_{1,n}^{-2}$ and $\eta_{\beta_{1,n}\beta_{i,n}}=\eta_{\beta_{i,n}},$ we have
$|\eta_{\beta_{i,n}\beta_{1,n}}-\tilde{\lambda}_{1,n}^{-2}|=|\eta_{\beta_{i,n}}\tilde{\lambda}_{1,n}^{-2}-\tilde{\lambda}_{1,n}^{-2}|<\delta|\tilde{\lambda}_{1,n}|^{-4}$ and
$|\eta_{\beta_{1,n}\beta_{i,n}}-\zeta_{\beta_{i,n}}|=|\eta_{\beta_{i,n}}-\zeta_{\beta_{i,n}}|<\delta|\tilde{\lambda}_{1,n}|^{-2},$ for large $n.$\par
Now we have reduced II$_{2b}$ to the collapsing behaviors of $\eta_{\beta_{i,n}},\zeta_{\beta_{i,n}}$ there are four cases that needs to be considered as follows:
\begin{itemize}
\item[(I)]
\[ \lim_j\left\{\frac{|\eta_{\tilde{\alpha}_{i,n_j}}|-|\tilde{\lambda}^{-2}_{1,n_j}|}{|\tilde
{\lambda}_{1,n_j}|^{-2}},
\frac{|\zeta_{\tilde{\alpha}_{i,n_j}}\tilde{\lambda}^{2\tilde{l}_{i,n_j}}_{1,n_j}|-|\tilde{\lambda}^{-2}_{1,n_j}|}{|\tilde
{\lambda}_{1,n_j}|^{-2}}\right\}<C_1.\]
\item[(II)]
\[ \lim_j\frac{|\eta_{\tilde{\alpha}_{i,n_j}}|-|\tilde{\lambda}^{-2}_{1,n_j}|}{|\tilde
{\lambda}_{1,n_j}|^{-2}}<C_2,\quad\lim_j\frac{|\zeta_{\tilde{\alpha}_{i,n_j}}\tilde{\lambda}^{2\tilde{l}_{i,n_j}}_{1,n_j}|-|\tilde{\lambda}^{-2}_{1,n_j}|}{|\tilde
{\lambda}_{1,n_j}|^{-2}}=\infty.\]  
\item[(III)]
\[ \lim_j\frac{|\eta_{\tilde{\alpha}_{i,n_j}}|-|\tilde{\lambda}^{-2}_{1,n_j}|}{|\tilde
{\lambda}_{1,n_j}|^{-2}}=\infty,\quad \lim_j\frac{|\zeta_{\tilde{\alpha}_{i,n_j}}\tilde{\lambda}^{2\tilde{l}_{i,n_j}}_{1,n_j}|-|\tilde{\lambda}^{-2}_{1,n_j}|}{|\tilde
{\lambda}_{1,n_j}|^{-2}}<C_3.\]
\item[(IV)]
\[ \lim_n\left\{\frac{|\eta_{\tilde{\alpha}_{i,n}}|-|\tilde{\lambda}^{-2}_{1,n}|}{|\tilde
{\lambda}_{1,n}|^{-2}},
\frac{|\zeta_{\tilde{\alpha}_{i,n}}\tilde{\lambda}^{2\tilde{l}_{i,n}}_{1,n}|-|\tilde{\lambda}^{-2}_{1,n}|}{|\tilde
{\lambda}_{1,n}|^{-2}}\right\}=\infty.\]
\end{itemize}
{\bf Consider (I)}:\\
In this case, we will produce a set of classical generators for sufficiently large $n_j$ by using condition (I), which is the transformed set of generators from the original set of generators.\par
Since $|\zeta_{\tilde{\alpha}_{i,n_j}}\tilde{\lambda}^{2\tilde{l}_{i,n_j}}_{1,n_j}|>|\tilde{\lambda}_{1,n_j}|^{-2}$ and our assumption 
and $\zeta_{\beta_{1,n_j}\beta_{i,n_j}}=\tilde{\lambda}_{1,n_j}^2\zeta_{\beta_{i,n_j}},$ we have for large $j$,
\[ 1<|\zeta_{\beta_{1,n_j}\beta_{i,n_j}}|<(C_1+1), 2\le i\le k.\]
Since $\eta_{\beta_{i,n_j}}=\eta_{\tilde{\alpha}^{\tilde{l}_{i,n_j}}_{1,n_j}\tilde{\alpha}_{i,n_j}}=\eta_{\tilde{\alpha}_{i,n_j}}$ and
$|\tilde{\lambda}_{1,n_j}|^{-2}<|\eta_{\tilde{\alpha}_{i,n_j}}|\le 1$ implies that $1<|\eta_{\beta_{i,n_j}}\tilde{\lambda}_{1,n_j}^2|.$
By our assumption we have Similarly, $1<|\eta_{\beta_{i,n_j}\beta_{1,n_j}^{-1}}|<(C_1+1).$ 
Since $|\tilde{\lambda}_{1,n_j}|\to\infty$ it follows that
there exists $\kappa>1$ such that, $1-\kappa^{-1}<|\eta_{ \beta_{1,n_j}\beta_{i,n_j}\beta^{-1}_{1,n_j} } |,|\zeta_{\beta_{1,n_j}\beta_{i,n_j}\beta_{1,n_j}^{-1}}|<1+\kappa$ for large $i.$ By Lemma \ref{fix-trace} 
with Remark \ref{rem-2} and
$|\tr(\beta_{i,n_j})|>\epsilon|\tilde{\lambda}_{1,n_j}|$ for large $j$ we have for some $\rho_1,\rho_2>0$ that,
\[1-\kappa^{-1}-\rho_1|\tilde{\lambda}_{1,n_j}|^{-1}<|z_{\beta_{1,n_j}\beta_{i,n_j}\beta^{-1}_{1,n_j} ,l}|,
|z_{\beta_{1,n_j}\beta_{i,n_j}\beta^{-1}_{1,n_j} ,u}|<1+\kappa+\rho_2|\tilde{\lambda}_{1,n_j}|^{-1}.\]
Hence there exists $\kappa'>1$ such that,
\[\kappa'^{-1}<|z_{\beta_{1,n_j}\beta_{i,n_j}\beta^{-1}_{1,n_j} ,l}|\le|z_{\beta_{1,n_j}\beta_{i,n_j}\beta^{-1}_{1,n_j} ,u}|<\kappa'.\]
Consider, 
\[S_j=<\beta_{1,n_j},..., \beta_{1,n_j}\beta_{i,n_j}\beta_{1,n_j}^{-1} ,...,\beta_{1,n_j}\beta_{k,n_j}\beta^{-1}_{1,n_j}>.\]
Since $|\tr(\beta_{1,n_j}\beta_{i,n_j}\beta_{1,n_j}^{-1})|\to\infty$ and by $k-1$ classical we have $S_j$ satisfies the second set of conditions of Lemma \ref{classical},
hence classical.\par
{\bf Consider (IV)}:\\
In this case, we will produce a set of classical generators for sufficiently large $n.$ By using condition (IV), the generators are conjugate to the original set of generators.\par
There exists $0<\rho_n\to\infty$ with $\rho_n<|\tilde{\lambda}_{1,n}|$, such that 
$|\zeta_{\tilde{\alpha}_{i,n}}\tilde{\lambda}^{2\tilde{l}_{i,n}}_{1,n}|-|\tilde{\lambda}_{1,n}|^{-2}>\rho_n|\tilde{\lambda}_{1,n}|^{-2}.$ 
Let $\chi_{i,n}$ be Mobius transformations defined by $\chi^2_{n}(x)=\frac{\tilde{\lambda}_{1,n}}{\sqrt{\rho_n}}x.$ 
We will show that $\chi_n<\beta_{1,n},...,\beta_{i,n},...,\beta_{k,n}>\chi_n^{-1}$ satisfies Remark \ref{classical-rem-2} of Lemma \ref{classical}.  \par
If we have that $1-\delta<|\eta_{\beta_{i,n}}|\le 1$ for some $1>\delta>0$ then,
\[\frac{|\tilde{\lambda}_{1,n}|}{\sqrt{\rho_n}}(1-\delta)<|\eta_{\chi_n\beta_{i,n}\chi_n^{-1}}|<
\frac{|\tilde{\lambda}_{1,n}|}{\sqrt{\rho_n}}.\]
Otherwise we have,
\[\frac{\sqrt{\rho_n}}{|\tilde{\lambda}_{1,n}|}+\frac{1}{|\tilde{\lambda}_{1,n}|\sqrt{\rho_n}}<|\eta_{\chi_n\beta_{i,n}\chi_n^{-1}}|<\frac{|\tilde{\lambda}_{1,n}|}{\sqrt{\rho_n}}.\]
Similarly, by condition of Collapsing fixed points II$_2$ we have $|\zeta_{\tilde{\alpha}_{i,n}}\tilde{\lambda}_{1,n}^{2\tilde{l}_{i,n}}|<1.$ This gives,
\[\frac{\sqrt{\rho_n}}{|\tilde{\lambda}_{1,n}|}+\frac{1}{|\tilde{\lambda}_{1,n}|\sqrt{\rho_n}}<|\zeta_{\chi_n\beta_{i,n}\chi_n^{-1}}|<\frac{|\tilde{\lambda}_{1,n}|}{\sqrt{\rho_n}}.\]

By Lemma \ref{fix-trace} with Remark \ref{rem-2}, $|z_{\beta_{i,n},\pm}-\zeta_{\beta_{i,n}}|<C|\tr(\beta_{i,n})|^{-2}$ and
$|z_{\beta_{i,n},\mp}-\eta_{\beta_{i,n}}|<C|\tr(\beta_{i,n})|^{-2}$ for some $C>0.$ Since $|\tr(\beta_{i,n})|>\epsilon|\tilde{\lambda}_{1,n}|$ for large $n$
we have, 
\[|z_{\chi_n\beta_{i,n}\chi_n^{-1},\pm}-\eta_{\chi_n\beta_{i,n}\chi_n^{-1}}|<\frac{C}{\epsilon|\tilde{\lambda}_{1,n}|\sqrt{\rho_n}},\quad
|z_{\chi_n\beta_{i,n}\chi_n^{-1},\mp}-\zeta_{\chi_n\beta_{i,n}\chi_n^{-1}}|<\frac{C}{\epsilon|\tilde{\lambda}_{1,n}|\sqrt{\rho_n}}.\]
Hence, 
\[|z_{\chi_n\beta_{i,n}\chi_n^{-1},u}|<\frac{|\tilde{\lambda}_{1,n}|}{\sqrt{\rho_n}}+\frac{1}{|\tilde{\lambda}_{1,n}|\sqrt{\rho_n}}
+\frac{C}{\epsilon|\tilde{\lambda}_{1,n}|\sqrt{\rho_n}}\quad\text{and},\]
\[|z_{\chi_n\beta_{i,n}\chi_n^{-1},_l}|>\frac{\sqrt{\rho_n}}{|\tilde{\lambda}_{1,n}|}-\frac{1}{|\tilde{\lambda}_{1,n}|\sqrt{\rho_n}}-
\frac{C}{\epsilon|\tilde{\lambda}_{1,n}|\sqrt{\rho_n}}.\]
We have $|\tilde{\lambda}_{1,n}|^{-1}<|z_{\chi_n\beta_{i,n}\chi_n^{-1},l}|\le|z_{\chi_n\beta_{i,n}\chi^{-1}_n,u}|<|\tilde{\lambda}_{1,n}|$ for large $n.$\par
By above estimates for fixed points of $\chi_n\beta_{i,n}\chi_n^{-1},$ and $|\tr(\beta_{i,n})|>\epsilon|\tilde{\lambda}_{1,n}|$ we have,
\begin{eqnarray*}
\frac{(|z_{\chi_n\beta_{i,n}\chi_n^{-1},u}|+1)(|\tilde{\lambda}_{1,n}|+1)}{|\tr(\beta_{i,n})|(|\tilde{\lambda}_{1,n}|-|z_{\chi_n\beta_{i,n}\chi_n^{-1},u}|)}
&<&\frac{\frac{|\tilde{\lambda}_{1,n}|^2}{\sqrt{\rho_n}} +\frac{|\tilde{\lambda}_{1,n}|}{\sqrt{\rho_n}}+|\tilde{\lambda}_{1,n}|+\delta
}{\epsilon|\tilde{\lambda}_{1,n}|^2(1-\frac{1}{\rho_n}-\frac{1}{|\tilde{\lambda}_{1,n}|^2\sqrt{\rho_n}}
-\frac{C}{\epsilon|\tilde{\lambda}_{1,n}|^2\sqrt{\rho_n}})}\\
&<&\frac{1 +\frac{1}{|\tilde{\lambda}_{1,n}|}+\frac{\sqrt{\rho_n}}{|\tilde{\lambda}_{1,n}|}+\frac{\delta\sqrt{\rho_n}}{|\tilde{\lambda}_{1,n}|^2}
}{\epsilon\sqrt{\rho_n}(1-\frac{1}{\rho_n}-\frac{1}{|\tilde{\lambda}_{1,n}|^2\sqrt{\rho_n}}
-\frac{C}{\epsilon|\tilde{\lambda}_{1,n}|^2\sqrt{\rho_n}})}\\
\text{by $\rho_n<|\tilde{\lambda}_{1,n}|$} &\text{we}& \text{have some $\delta'>1$ such that,}\\
&<&\frac{\delta'}{\epsilon\sqrt{\rho_n}(1-\frac{1}{\rho_n}-\frac{1}{|\tilde{\lambda}_{1,n}|^2\sqrt{\rho_n}}
-\frac{C}{\epsilon|\tilde{\lambda}_{1,n}|^2\sqrt{\rho_n}})}\to\ 0.
\end{eqnarray*}
For the other part of the conditions of Remark \ref{classical-rem-2} we have: \\
If $|z_{\chi_n\beta_{i,n}\chi_n^{-1},l}|<M$ then, 
\begin{eqnarray*}
\frac{(|z_{\chi_n\beta_{i,n}\chi_n^{-1},l}|+1)(|\tilde{\lambda}_{1,n}|^{-1}+1)}{|\tr(\beta_{i,n})|(|z_{\chi_n\beta_{i,n}\chi_n^{-1},l}|-|\tilde{\lambda}_{1,n}|^{-1})}&<&
\frac{(M+1)(|\tilde{\lambda}_{1,n}|^{-1}+1)}
{\epsilon|\tilde{\lambda}_{1,n}|(\frac{\sqrt{\rho_n}}{|\tilde{\lambda}_{1,n}|}-\frac{1}{|\tilde{\lambda}_{1,n}|\sqrt{\rho_n}}-
\frac{C}{\epsilon|\tilde{\lambda}_{1,n}|\sqrt{\rho_n}}-\frac{1}{|\tilde{\lambda}_{1,n}|})}\\
&<&\frac{M'}{\epsilon(\sqrt{\rho_n}-\frac{1}{\sqrt{\rho_n}}-\frac{C}{\epsilon\sqrt{\rho_n}}-1)}
<\frac{M'}{\epsilon''\sqrt{\rho_n}}\to 0.
\end{eqnarray*}
Otherwise we have $|z_{\chi_n\beta_{i,n}\chi_n^{-1},l}|\to\infty$ and,
\[
\frac{(|z_{\chi_n\beta_{i,n}\chi_n^{-1},l}|+1)(|\tilde{\lambda}_{1,n}|^{-1}+1)}{|\tr(\beta_{i,n})|(|z_{\chi_n\beta_{i,n}\chi_n^{-1},l}|-|\tilde{\lambda}_{1,n}|^{-1})}
<\frac{\delta''}{\epsilon|\tilde{\lambda}_{1,n}|}\to 0,\quad\text{for some $\delta''>0$}.\]
Set $S_n=\chi_n<\beta_{1,n},...,\beta_{i,n},...,\beta_{k,n}>\chi_n^{-1}, S'_n=<\tilde{\alpha}_{1,n},...,\tilde{\alpha}_{i,n},...,\tilde{\alpha}_{k,n}>.$
Since $S_n'$ is $k-1$ classical of standard form and $\beta_{i,n}=\alpha^{\tilde{l}_{i,n}}_{1,n}\alpha_{i,n},$  we have $S_n$ satisfies, 
\begin{itemize}
\item
\[|\tilde{\lambda}_{1,n}|^{-1}<|z_{\chi_n\beta_{i,n}\chi_n^{-1},l}|\le|z_{\chi_n\beta_{i,n}\chi^{-1}_n,u}|<|\tilde{\lambda}_{1,n}|\]
\item
\[\lim_n\left\{\frac{(|z_{\chi_n\beta_{i,n}\chi_n^{-1},u}|+1)(|\tilde{\lambda}_{1,n}|+1)}{|\tr(\beta_{i,n})|(|\tilde{\lambda}_{1,n}|-|z_{\chi_n\beta_{i.n}\chi_n^{-1},u}|)},
\frac{(|z_{\chi_n\beta_{i,n}\chi_n^{-1},l}|+1)(|\tilde{\lambda}_{1,n}|^{-1}+1)}{|\tr(\beta_{i,n})|(|z_{\chi_n\beta_{i,n}\chi_n^{-1},l}|-|\tilde{\lambda}_{1,n}|^{-1})}\right\}=0\]
\end{itemize}
 conditions of Remark \ref{classical-rem-2}.\par
{\bf Consider (II)}:\\
In this case, we will produce a set of classical generators for sufficiently large $n.$ Using condition (II), the set of generators are conjugate to the transformed from the original set generators.\par
Since $|\tilde{\lambda}_{1,n_j}|^{-2}<|\eta_{\tilde{\alpha}_{i,n_j}}|\le 1,$ and by our condition we have
$1<|\eta_{\tilde{\alpha}_{i,n_j}}\tilde{\lambda}_{1,n_j}^2|\le 1+C_2.$ As in (IV) and using same notations similarly we have,
\[\frac{|\tilde{\lambda}_{1,n}|}{\sqrt{\rho_n}}<|\eta_{\chi_n\beta_{i,n_j}\beta_{1,n_j}^{-1}\chi_n^{-1}}|<
\frac{|\tilde{\lambda}_{1,n}|}{\sqrt{\rho_n}}(1+C_2),\]
\[\frac{\sqrt{\rho_n}}{|\tilde{\lambda}_{1,n}|}+\frac{1}{|\tilde{\lambda}_{1,n}|\sqrt{\rho_n}}<|\zeta_{\chi_n\beta_{i,n}\chi_n^{-1}}|<\frac{|\tilde{\lambda}_{1,n}|}{\sqrt{\rho_n}}.\]
\par
Now by same argument as in (IV) we have, 
\[\chi_n<\beta_{1,n_j},...,\beta_{i,n}\beta^{-1}_{1,n_j},...,\beta_{k,n_j}\beta^{-1}_{1,n_j}>\chi_n^{-1}\]
are classical Schottky groups for large $j.$\par
{\bf Consider (III)}:\\
Finally in this case, we will produce a set of classical generators which are conjugate to the transformed set of generators from the original set.\par
As in (I) we have, $1<|\zeta_{\beta_{1,n_j}\beta_{i,n_j}}|<(1+C_3).$ And also as in (IV) and (II) we also have,
\[\frac{|\tilde{\lambda}_{1,n}|}{\sqrt{\rho_n}}<|\zeta_{\chi_n\beta_{1,n_j}\beta_{i,n_j}\beta_{1,n_j}^{-1}\chi_n^{-1}}|<
\frac{|\tilde{\lambda}_{1,n}|}{\sqrt{\rho_n}}(1+C_3),\]
\[\frac{\sqrt{\rho_n}}{|\tilde{\lambda}_{1,n}|}+\frac{1}{|\tilde{\lambda}_{1,n}|\sqrt{\rho_n}}<|\zeta_{\chi_n\beta_{1,n_j}\beta_{i,n}\beta_{1,n_j}^{-1}\chi_n^{-1}}|<\frac{|\tilde{\lambda}_{1,n}|}{\sqrt{\rho_n}}.\]
Then by same arguments as in (IV) we have, 
\[\chi_n<\beta_{1,n_j},...,\beta_{1,n_j}\beta_{i,n}\beta^{-1}_{1,n_j},...,\beta_{1,n_j}\beta_{k,n_j}\beta^{-1}_{1,n_j}>\chi_n^{-1}\]
are classical Schottky groups for large $j.$\par
Hence we have completed the proof for all cases (I), (II), (III), (IV).\par
Now from above proofs of (I)-(IV), suppose we have some generators that satisfies some of (I)-(IV) then, there exists 
$a_{2,n},...,a_{k,n},b_{2,n},...,b_{k,n}\in\{1,-1,0\}$ such that,
\[\chi_n<\beta_{1,n_j},...,\beta^{a_{i,n_j}}_{1,n_j}\beta_{i,n}\beta^{b_{i,n_j}}_{1,n_j},...,\beta^{a_{k,n_j}}_{1,n_j}\beta_{k,n_j}\beta^{b_{k,n_j}}_{1,n_j}>\chi_n^{-1},\]
is classical Schottky group.
\par
\subsection{Proof of Theorem \ref{2-fixed-point} and Rank $k$} 
\emph{Completing proof of Theorem \ref{2-fixed-point}}:
For general case, we combine results of Collapsing fixed points I, II to complete the proof as follows. Given a sequence $\G_n$ which is $k-1$ classical with 
$\mathfrak{D}_n\to 0$, it follows that if $Z_{\G_n}$ is not bounded below by some constant $c>0$ then, we have some elements of all generating sets of the sequence having Collapsing fixed points I, or II, or both as defined earlier. In both collapsings fixed points situations, we showed that either the generating set gives classical generators, or it leads to bounded $Z_{\G_n}$ below.
In particular, suppose both collapsing I and II exists for all sequence $S_n$ of generating set of $\G_n$ . If $\G_n$ is sequence of non-classical Schottky groups
with $\mathfrak{D}_n\to 0$ then
it follows from results for Collapsing I and II, we must have a 
uniform lower bound on $Z_{\G_n}.$ 
Hence we have completed proof Theorem \ref{2-fixed-point}.
\end{proof}
\begin{thm}[Rank-k classical Schottky]\label{rank-k}
There exists $\nu_k>0$ such that any rank $k$ non-classical Schottky group $\G$ must have $\mathfrak{D}_\G>\nu_k.$
\end{thm}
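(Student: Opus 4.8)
The plan is to argue by induction on the rank $k$, using the fixed-rank machinery assembled in Sections 6--8. For the base case $k=2$ one combines Proposition \ref{bounded-classical} (the case $|\tr(\alpha_n)|<C$) with the trivial case $|\tr(\alpha_{1,n})|\to\infty$ treated at the start of the proof of Theorem \ref{t-space}: if every generator has trace $\to\infty$ then disjoint isometric circles appear automatically, and if one generator has bounded trace we conjugate it to $0,\infty$ and invoke Proposition \ref{bounded-classical}. For the inductive step, assume the theorem holds for rank $k-1$; equivalently, Proposition \ref{k-1} is available. Suppose toward a contradiction that there is a sequence $\G_n$ of rank-$k$ nonclassical Schottky groups with $D_n=D_{\G_n}\to 0$ (monotonically after passing to a subsequence).

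First I would normalize: by Corollary \ref{a-b-cor} we may choose generating sets $S_{\G_n}=<\alpha_{1,n},\beta_{2,n},\ldots,\beta_{k,n}>$ with $\kappa(S_{\G_n})\to\infty$, and by Remark \ref{a-b-rem} the $\beta$-condition holds whenever $|\lambda_{\alpha_{1,n}}|$ is bounded, while the unbounded-$\lambda$ case reduces as in Theorem \ref{t-space}. Next apply Theorem \ref{2-fixed-point}: for large $n$ there is a generating set (still with $\kappa\to\infty$) and an index $j$ such that $Z_{<\alpha_{1,n},\ldots,\alpha_{j,n}>}>c$ for a universal $c>0$ and $<\alpha_{1,n},\alpha_{j+1,n},\ldots,\alpha_{k,n}>$ already generate classical Schottky groups. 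If $j<k$ we split the markings: the ``tail'' $\alpha_{j+1,n},\ldots,\alpha_{k,n}$ gives classical Schottky disks, so it suffices to fit classical disks for $\alpha_{1,n},\ldots,\alpha_{j,n}$ disjoint from these; since the tail disks are fixed circles bounded away from the relevant region, this is handled by the same disjointness estimates used in Lemma \ref{classical}. If $j=k$, i.e. $Z_{\G_n}>c$ for all large $n$, then $\G_n\in\mf{J}_k(c)$ and Theorem \ref{t-space} (applied with $\tau=c$) forces $\G_n\in\mf{J}_{k,o}$ for $D_n$ below the threshold $\nu$ — contradicting nonclassicality.

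The remaining point is the rank-extension bookkeeping when $1\le j<k$: one must verify that the classical markings produced for the low-trace block and the classical markings of the high-trace tail can be realized \emph{simultaneously} by disjoint circles. Here I would use Lemma \ref{aritical-k} and Corollary \ref{trace-rank-dis}: as the rank and the number of already-classical generators grow, the traces of the remaining generators grow sufficiently fast relative to a reference generator that their isometric (or non-isometric, via Proposition \ref{non-iso}) circles shrink faster than the inter-center gaps, exactly as in the proof of Lemma \ref{classical}. Feeding these trace-growth bounds into the $\kappa(S_{\G_n})\to\infty$ hypothesis yields $|z_{\beta_{i,n},\pm}-z_{\beta_{i',n},\pm}||\tr(\beta_{i,n})|\to\infty$ across the two blocks, which is precisely the mutual-disjointness condition needed. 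Taking $\nu_k$ to be the minimum of the rank-$(k-1)$ constant, the constant $\nu$ from Theorem \ref{t-space} for $\tau=c$, and the threshold from Corollary \ref{generator-set}, we obtain the desired universal lower bound and the contradiction completes the induction.

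I expect the main obstacle to be this last step — reconciling the two blocks of generators into one classical marking — because it is where the rank genuinely increases and where one must control the interaction between a bounded-$Z$ block and a block whose fixed points may be collapsing; the $\kappa$-interaction function was designed for exactly this, but checking that its definition covers all the trace regimes $(I)$--$(IV)$ arising from Theorem \ref{2-fixed-point} is the delicate part.
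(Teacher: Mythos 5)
Your proposal is correct and follows essentially the same route as the paper: the paper's entire proof of this theorem is the one-line observation that it follows from Theorem \ref{t-space} combined with Theorem \ref{2-fixed-point}, which is exactly the skeleton of your contradiction argument (take $D_{\G_n}\to 0$, use Theorem \ref{2-fixed-point} to produce a $\kappa\to\infty$ generating set with a bounded-$Z$ block and a classical tail, then invoke Theorem \ref{t-space}). Your added induction on rank and the discussion of merging the two blocks elaborate details the paper leaves implicit rather than diverging from its approach.
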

\begin{proof}
We proof by induction. By Main theorem of \cite{HS}, there exists $\nu_2>0$ such that all rank $2$ is classical when Hausdorff dimension $<\nu_2.$
Let $k>3.$ Hence by induction and Theorem \ref{t-space} and Thereom \ref{2-fixed-point}, there exists $\nu_k>0$ such that any $\mathfrak{D}_\G<\nu_k$ is classical Schottky group.
\end{proof}
\section{Extension from fixed Rank $k$ to all ranks}
This is section we will prove that our result on fixed rank $k$ can be extended to all finitely generated $\G.$\par
Define $\mathcal{D}_k$ as:
\[\mathcal{D}_k=\sup\{\mathcal{D}'_k|\mbox{any rank $k$ nonclassical Schpottky group $\G$ must have $\mathfrak{D}_\G\ge\mathcal{D}'_k$}\}.\] 
\begin{lem}\label{k-finite}
There exists a positive number $\delta>0$ and $K$ a positive integer such that, given any
$\G_{k+1}$  rank $k+1$ Schottky group with the property that all rank $N<k+1$ subgroups of $\G_{k+1}$ are classical schottky groups then,
$\G_{k+1}$ is classical Schottky group provided that $\mathfrak{D}_{\G_{k+1}}<\delta$ and $k>K$.
\end{lem}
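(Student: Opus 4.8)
The plan is to argue by contradiction, using the quantitative estimates that relate the \emph{rank} $k$ to the growth of traces, namely Lemma \ref{aritical-k} and its trace version Proposition \ref{trace-rank}, together with Corollary \ref{trace-rank-dis}. Suppose no such $\delta$ and $K$ exist. Then there is a sequence $\G_{k_n+1}$ of rank $k_n+1$ nonclassical Schottky groups with $k_n\to\infty$, with $D_{\G_{k_n+1}}\to 0$, and with the property that every proper subgroup of rank $N<k_n+1$ is classical. First I would fix a generating set $S_n=\langle\alpha_{1,n},\beta_{2,n},\dots,\beta_{k_n+1,n}\rangle$, conjugate so that $\alpha_{1,n}$ has fixed points $0,\infty$, and arrange the generators so that $|\tr(\alpha_{1,n})|\le|\tr(\beta_{i,n})|$ for all $i$. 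By Corollary \ref{a-b-cor} (after passing to a subsequence) I may assume the generating set satisfies $\kappa(S_{\G_n})\to\infty$, and by Theorem \ref{2-fixed-point} I may further arrange that $Z_{\langle\alpha_{1,n},\dots,\alpha_{j_n,n}\rangle}>c$ while the complementary block generates a classical Schottky group; since $k_n\to\infty$, at least one of these two blocks has rank tending to infinity.

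The key step is to exploit that the lower bound in Proposition \ref{trace-rank} \emph{improves} as the rank grows: for each $i$ there is $j\ne i$ with
\[|\tr(\beta_{j,n})|>\rho\left(\frac{(2k_n-1)|\lambda_{\beta_{i,n}}|^{2D_n}+(2k_n-1)}{|\lambda_{\beta_{i,n}}|^{2D_n}-1}\right)^{\frac{1}{2D_n}},\]
so as $k_n\to\infty$ and $D_n\to 0$ the traces of the $\beta_{j,n}$ are forced to grow without bound — indeed faster than any fixed power — relative to a reference generator. I would feed this into the trace–fixed-point estimates of Section 4 (Lemma \ref{fix-tracea}, Corollary \ref{2-tr-fixed}, Lemma \ref{fixsbound-2}) to conclude that, for the generating set constructed above, the radii of the relevant isometric spheres (or the disjoint circles of Proposition \ref{non-iso} in the bounded-trace case) shrink strictly faster than the gaps between the fixed points. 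This is precisely the hypothesis of Lemma \ref{classical} (or of the criterion following Proposition \ref{k-1}), so for large $n$ the group $\G_{k_n+1}$ is in fact a classical Schottky group — contradicting nonclassicality.

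The main obstacle is handling the dichotomy coming from Theorem \ref{2-fixed-point}: the ``classical block'' of rank going to infinity is fine, but on the block with $Z>c$ one must verify that the rank-driven trace growth really does overwhelm the (bounded below) fixed-point gaps uniformly in $n$, and here one must be careful because $|\lambda_{\alpha_{1,n}}|$ may or may not stay bounded, forcing one to run the $(A_1)/(A_2)$-type case split of Section 6 again but now with the extra leverage of the factor $(2k_n-1)$. I expect the technical heart of the argument to be showing that the quantity $\kappa(S_{\G_n})$, which already $\to\infty$, grows \emph{fast enough} as a function of $k_n$ that the circle-disjointness in Lemma \ref{classical} kicks in for $n$ large once $k_n>K$; this is where Corollary \ref{trace-rank-dis} (the form with the $Z$-hypothesis dropped, carrying the $e^{-2\dis(\mathcal{L},\mathcal{L})}$ factor) does the work, since it lets me convert a lower bound on the translation length of a reference generator directly into the required trace growth of some other generator without any a priori control on fixed-point separation.
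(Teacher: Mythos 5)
Your proposal correctly identifies the engine of the argument --- the rank-weighted trace bounds of Proposition \ref{trace-rank} and Corollary \ref{trace-rank-dis} --- and the broad slogan (rank-driven trace growth forces circle disjointness) matches the paper. But the decisive step, the one that makes $\delta$ independent of the rank, is missing. The paper splits the generating set into a \emph{bounded} head block $\langle\alpha_{1,k+1},\dots,\alpha_{N,k+1}\rangle$ of fixed rank $N$, disposed of by the hypothesis that lower-rank subgroups are classical together with the fixed-rank results (which only require $D<\nu_N$ for one fixed $N$), and a tail of $\g$-generators of index $m>N$ produced by the Section 6 procedure; for the tail it proves an explicit decay \emph{in the index} $m$, e.g.
\[
\frac{1}{|\tr(\g_{m,k+1})|\,(|\lambda_{\alpha_{1,k+1}}|^2-1)}<\rho''\left(\frac{1}{(2m-1)+(2m+1)}\right)^{3}\longrightarrow 0\qquad(m\to\infty),
\]
valid as soon as $D_{\G_{k+1}}<\tfrac16$ (with a similar $O(1/m)$ bound in the bounded-trace case for $D\le 1$). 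It is this decay at a \emph{fixed universal threshold} on $D$ --- not $D\to 0$ --- that yields a single $\delta$ and $K$ for all ranks. Your proposal instead routes everything through Corollary \ref{a-b-cor}, Theorem \ref{2-fixed-point} and Lemma \ref{classical} applied to groups whose rank tends to infinity; those are fixed-rank statements, and nothing in your argument shows their constants (the $c$ of Theorem \ref{2-fixed-point}, the thresholds implicit in Lemma \ref{classical}, the $\nu_k$ of Theorem \ref{rank-k}) are uniform in $k$. Establishing exactly that uniformity is the content of this lemma, so as written your argument is close to circular; you flag it as ``the technical heart'' but do not supply it.

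Relatedly, your proof never uses the hypothesis that all rank $N<k+1$ subgroups of $\G_{k+1}$ are classical. In the paper this is what handles the finitely many generators of small index, whose traces are \emph{not} forced to be large by the rank factor; only the high-index tail is controlled by the $(2m\pm1)$ growth. Your split via Theorem \ref{2-fixed-point} into a $Z>c$ block and a classical block also leaves unaddressed how the two blocks are combined into one classical marking, i.e.\ why all the circles (head and tail together, plus $C_{|\lambda_{\alpha_{1}}|^{\pm1}}$) are \emph{simultaneously} disjoint. To repair the proposal you would need to (i) fix a cutoff $N$ independent of $k$ and treat $\langle\alpha_{1},\dots,\alpha_{N}\rangle$ by the hypothesis, and (ii) prove the index-decay estimate above for the remaining generators (including the bounded-trace and $|\lambda_{\alpha_{1,k+1}}|\to1$ subcases, as in Proposition \ref{delta-z-r}, Lemma \ref{m=0} and Proposition \ref{classical-i-2}), then verify the joint disjointness --- which is precisely the paper's argument.
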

The idea of the proof is induction on rank. To do so we use Proposition \ref{trace-rank}, which say that with respect to $\alpha_{1,n}$ there must exists some generator such that the norms of its trace growth exponentially with respect to the rank $k.$ So assuming that we have all rank $k$ of ``small" Hausdorff dimensions are classical
Schottky groups then we will show that the additional generator of the ``$k+1$" element will have disjointed Schottky circles from rest of the ``$k$" elements when the rank is sufficiently large.\par
\begin{proof}
Let $\{\G_{k+1}\}$ be a sequence of Schottky groups each $\G_{k+1}$ is of rank $k+1,$ such that
any rank $N$ that is $<k+1$ is classical.
We will show that for sufficently large $k,$ $\G_{k+1}$ is classical for small $\mathfrak{D}_{k+1}.$\par
In order to prove $\G_{k+1}$ is eventually classical we will need to use the estimates given by Proposition \ref{trace-rank}. 
We denote generators set of $\G_{k+1}$ as $S_{\G_{k+1}}=<\alpha_{1,k+1},...,\alpha_{k+1,k+1}>.$ We can assume that $\mathfrak{D}_{\G_{k+1}}\to 0,$ otherwise 
we are done with the lemma.\par
If necessary, for $i\ge 2$ replace $\alpha_{i,k+1}$ by $\alpha^{l_{i,k+1}}_{1,k+1}\alpha_{i,k+1}\alpha^{m_{i,k+1}}_{1,k+1}$ 
such that 
\[|\lambda_{1,k+1}|^{-2}\le|\zeta_{\alpha^{l_{i,k+1}}_{1,k+1}\alpha_{i,k+1}\alpha^{m_{i,k+1}}_{1,k+1}}|,|\eta_{\alpha^{l_{i,k+1}}_{1,k+1}\alpha_{i,k+1}\alpha^{m_{i,k+1}}_{1,k+1}}|\le 1 ,\] 
we can assume that all $|\lambda_{1,k+1}|^{-2}\le|\zeta_{\alpha_{i,k+1}}|,|\eta_{\alpha_{i,k+1}}|\le 1, i\ge2.$\par
For each $n\le k+1,$ we choose and arrange the geneartors as follows.
Follows from Corollary \ref{trace-rank-dis} there exists a generators that satisfies the estimates stated in Corollary \ref{trace-rank-dis} 
with respect to $\lambda_{\alpha_{1,k+1}}.$
Set $\alpha_{k+1,k+1}$ that satisfies the estimates provided by Corollary \ref{trace-rank-dis}. We arrange $\alpha_{k,k+1}$ simiarly by using Corollary \ref{trace-rank-dis}
on $<\alpha_{1,k+1},...,\alpha_{k,k+1}>.$ And we arrange generators this way for all generators down to $\alpha_{n+1,k+1}$ and in particular we have,
\begin{multline*}
|\tr(\alpha_{n+1,k+1})|\\>c\left(\frac{(2n-1)|\lambda_{\alpha_{1,k+1}}|^{2\mathfrak{D}_{\G_{k+1}}}+(2n+1)}
{|\lambda_{\alpha_{1,k+1}}|^{2\mathfrak{D}_{\G_{k+1}}}-1}\right)^{\frac{1}{2\mathfrak{D}_{\G_{k+1}}}}
\left(e^{-\dis(\mathcal{L}_{\alpha_{1,k+1}},\mathcal{L}_{\alpha_{n+1,k+1}})}\right). 
\end{multline*}
Let $\G_{n;k+1}\subset\G_{k+1}$ be generated by 
$S_{\G_{n;k+1}}:=<\alpha_{1,k+1},...,\alpha_{n,k+1}>.$ Since $\mathfrak{D}_{\G_{n;k+1}}\to 0$ for $k\to\infty,$ and by assumption that it is $k-1$ classical,
it follows from Lemma \ref{k-1-form} we can choose the generators set
$S_{\G_{n;k+1}}$ such that it is $k-1$ classical of standard form.
\par
Now we will show that the circles of $\g_{N+1,k+1},...,\g_{k+1,k+1}$ are disjoint from circles $C_{|\lambda_{\alpha_{1,k+1}}|^{-1}}, C_{|\lambda_{\alpha_{1,k+1}}|}.$\par
Suppose that, 
\[\frac{|z_{\g_{m,k+1},-}-z_{\g_{m,k+1},+}|}{(|\lambda_{\alpha_{1,k+1}}|^2-1)^2}>C\] 
for some $C>0,m>N.$ 
by Lemma \ref{delta-z} and above inequality and assume that $\mathfrak{D}_{\G_{k+1}}<\frac{1}{6}$ then,
\begin{multline*}
\frac{1}{|\tr(\g_{m,k+1})|(|\lambda_{\alpha_{1,k+1}}|^2-1)}\\
<\rho'\left( \frac{ |\lambda_{\alpha_{1,k+1}}|^{2\mathfrak{D}_{\G_{k+1}}}-1 }
{(2m-1)|\lambda_{\alpha_{1,k+1}}|^{2\mathfrak{D}_{\G_{k+1}}}+(2m+1)}
\right)^{\frac{1}{2\mathfrak{D}_{\G_{k+1}}}}(|\lambda_{\alpha_{1,k+1}}|^2-1)^{-3}\\
<\rho'\left(\frac{|\lambda_{\alpha_{1,k+1}}|^{2\mathfrak{D}_{\G_{k+1}}}-1}
{((2m-1)|\lambda_{\alpha_{1,k+1}}|^{2\mathfrak{D}_{\G_{k+1}}}+(2m+1)) (|\lambda_{\alpha_{1,k+1}}|^2-1)^{6\mathfrak{D}_{\G_{k+1}}}}
\right)^{\frac{1}{2\mathfrak{D}_{\G_{k+1}}}}\\
\mbox{since}\quad\frac{ |\lambda_{\alpha_{1,k+1}}|^{2\mathfrak{D}_{\G_{k+1}}}-1 }{(|\lambda_{\alpha_{1,k+1}}|^2-1)^{6\mathfrak{D}_{\G_{k+1}}}}
<\frac{ |\lambda_{\alpha_{1,k+1}}|^{2}-1 }{(|\lambda_{\alpha_{1,k+1}}|^2-1)^{6\mathfrak{D}_{\G_{k+1}}}}<1 \quad\mbox{we have,}\\
<\rho''\left(\frac{1}
{(2m-1)+(2m+1)}\right)^{3}\to 0,\quad\mbox{for}\quad m\to\infty.
\end{multline*}
for small $\mathfrak{D}_{\G_{k+1}}$ and large $m.$
 
\end{proof}
If $\frac{|z_{\g_{m,k+1},-}-z_{\g_{m,k+1},+}|}{(|\lambda_{\alpha_{1,k+1}}|^2-1)^2}\to 0$ then first let us assume that $|\lambda_{1,k+1}|>C'$ for some $C'>0.$
In this case we have the same result as to Proposition \ref{delta-z-r} that $r_{\g_{m,k+1}}+r'_{\g_{m,k+1}}\to 0.$ To see this we can simply reproduce the same
proof as given before. More precisely, we can assume that $|\tr(\g_{m,k+1})|<C''$ for some $C''>0$ othewise it is trivial, then we have as in last inequality of
proof of Proposition \ref{delta-z-r}:
\begin{align*}
r_{\g_{m,k+1}}+r'_{\g_{m,k+1}}&<\frac{M'|z_{\g_{m,k+1},-}|}
{\left((2m-1)|\lambda_{\g_{m,k+1}}|^{2\mathfrak{D}_{\G_{k+1}}}+(2m+1)\right)^{\frac{1}{2\mathfrak{D}_{\G_{k+1}}}}}\\
&<\frac{M''}{(2m+1)}\to 0,\quad\quad\mbox{for}\quad \mathfrak{D}_{\G_{k+1}}\le 1.
\end{align*}
For $|\lambda_{\alpha_{1,k+1}}|\to 1$ then we can follow the same procedure as in the proof of Lemma \ref{m=0} and Proposition \ref{classical-i-2}. More precisely,
we break down to two cases based on bounds of $f(\g_{m,k+1},\alpha_{1,k+1})\to 0$ or $f(\g_{m,k+1},\alpha_{1,k+1})>M$ for some $M>0.$\par
For $f(\g_{m,k+1},\alpha_{1,k+1})\to 0,$ by repeating the same technique we can show that 
$S''_{\G_{k+1}}$ with $\alpha_{1,k+1}$ replaced by $\alpha^{-1}_{1,k+1}\g_{m,k+1}$ in $S'_{\G_{k+1}}$ generates a classical Schottk group for large $m.$
Similarly for $f(\g_{m,k+1},\alpha_{1,k+1})>M,$ we have $\hat{S}_{\G_{k+1}}$ with $\g_{m,k+1}$ replaces by $\alpha^{-1}_{1,k+1}\g_{m,k+1}$ generates
a classical Schottky group for large $m.$
\par
\begin{cor}\label{k-ext}
$\inf_k\mathcal{D}_k>0.$
\end{cor}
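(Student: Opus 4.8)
Unwinding the definition, $\mathcal{D}_k$ is simply the infimum of $D_\G$ over all rank $k$ nonclassical Schottky groups (with $\mathcal{D}_k=+\infty$ if there are none); in particular every rank $k$ nonclassical Schottky group $\G$ satisfies $D_\G\ge\mathcal{D}_k$. By Theorem \ref{rank-k} we already know $\mathcal{D}_k>0$ for each individual $k$, so the only remaining point is that these bounds do not degenerate as $k\to\infty$. The plan is to prove this by induction on the rank, with the inductive step supplied by Lemma \ref{k-finite} and the induction seeded by the finitely many small ranks, where positivity is already in hand.

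Fix $\delta>0$ and the positive integer $K$ as in Lemma \ref{k-finite}, and put
\[
\delta_*:=\min\Bigl(\delta,\ \min_{2\le j\le K+1}\mathcal{D}_j\Bigr),
\]
which is positive, being a minimum of finitely many positive numbers (Theorem \ref{rank-k} for each $\mathcal{D}_j$). I claim $\mathcal{D}_k\ge\delta_*$ for every $k\ge2$; this gives $\inf_k\mathcal{D}_k\ge\delta_*>0$. For $2\le k\le K+1$ the claim holds by the very definition of $\delta_*$. Assume now $k\ge K+1$ and $\mathcal{D}_j\ge\delta_*$ for all $2\le j\le k$; I must show $\mathcal{D}_{k+1}\ge\delta_*$. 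Let $\G$ be any rank $k+1$ nonclassical Schottky group and suppose, toward a contradiction, that $D_\G<\delta_*$. Every subgroup $H\le\G$ of rank $N$ with $1\le N<k+1$ is again a Schottky group of rank $N$ (the subgroup generated by part of a Schottky system is visibly Schottky, and more generally finitely generated subgroups of Schottky groups are Schottky; a rank $1$ subgroup is classical automatically, e.g.\ by Proposition \ref{non-iso}). Since the critical exponent is monotone under passage to subgroups, $D_H\le D_\G<\delta_*\le\mathcal{D}_N$, using the inductive hypothesis since $N\le k$. Hence $H$ cannot be a nonclassical Schottky group, so $H$ is classical. Thus every subgroup of $\G$ of rank $<k+1$ is a classical Schottky group; as $k>K$ and $D_\G<\delta_*\le\delta$, Lemma \ref{k-finite} forces $\G$ to be classical, contradicting the choice of $\G$. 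Therefore $D_\G\ge\delta_*$ for every rank $k+1$ nonclassical Schottky group, i.e.\ $\mathcal{D}_{k+1}\ge\delta_*$, which completes the induction and hence the proof.

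Essentially all the difficulty has already been absorbed into Lemma \ref{k-finite}, which in turn rests on the rank-sensitive trace estimates of Proposition \ref{trace-rank} and Corollary \ref{trace-rank-dis} and on the $\g$-generator construction of Section 6. In the present argument the only point that deserves a word of care is the assertion that a rank $N<k+1$ subgroup of $\G$ is itself a Schottky group — a standard fact that is, in any case, already implicit in the hypothesis of Lemma \ref{k-finite} — after which $\inf_k\mathcal{D}_k>0$ follows by a routine induction on the rank.
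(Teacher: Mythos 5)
Your proof is correct and follows essentially the same route as the paper: combine the fixed-rank positivity of $\mathcal{D}_k$ (Theorem \ref{rank-k}) for the finitely many ranks up to the threshold $K$ with Lemma \ref{k-finite} for higher ranks, taking a minimum of finitely many positive constants. You merely make explicit the induction on rank and the verification that lower-rank subgroups are (classical) Schottky, details the paper's one-line proof leaves implicit.
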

\begin{proof}
Let $\delta>0, K$ be given by Lemma \ref{k-finite} then let $\mathfrak{D}=\min\{\delta,\mathcal{D}_K\}$ then any
nonclassical Schottky group $\G$ must have $\mathfrak{D}_{\G}>\mathfrak{D}.$
\end{proof}
\section{Proof of Main Theorem}
\begin{thm}\label{main-1}
There exists $\epsilon>0$ such that all finitely generated Schottky group $\G$
with $\mathfrak{D}_\G<\epsilon$ is a classical Schottky group.
\end{thm}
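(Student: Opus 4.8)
The plan is to combine the fixed-rank result (Theorem \ref{rank-k}, equivalently Corollary \ref{k-ext}) with the extension lemma of Section 9 to first establish a uniform positive lower bound $\mathfrak{D}>0$ on the Hausdorff dimension of limit sets of \emph{nonclassical} Schottky groups of \emph{any} finite rank. Indeed, by Corollary \ref{k-ext} we have $\inf_k\mathcal{D}_k>0$; set $\epsilon=\inf_k\mathcal{D}_k$. Then any finitely generated Schottky group $\G$ with $D_\G<\epsilon$ cannot be nonclassical of any rank $k$, for otherwise $D_\G\geq\mathcal{D}_k\geq\epsilon$, a contradiction. Since $D_\G=\mathfrak{D}_\G$ for all finitely generated non-elementary Kleinian groups by the resolution of the tameness conjecture (cited in Section 3), the hypothesis $D_\G<\epsilon$ is equivalent to $\mathfrak{D}_\G<\epsilon$, and the conclusion follows.

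The key steps in order: first, recall that Theorem \ref{rank-k} gives for each fixed $k\geq 2$ a threshold $\nu_k>0$ below which every rank $k$ Schottky group is classical; this is exactly the statement $\mathcal{D}_k\geq\nu_k>0$. Second, invoke Lemma \ref{k-finite}: there is a uniform $\delta>0$ and an integer $K$ such that for $k>K$, a rank $k+1$ Schottky group all of whose proper-rank subgroups are classical is itself classical whenever $D_{\G_{k+1}}<\delta$. Third, use an induction on rank built on Lemma \ref{k-finite}, together with the base case handled by $\mathcal{D}_K$, to conclude $\inf_k\mathcal{D}_k\geq\min\{\delta,\mathcal{D}_K\}>0$, which is precisely Corollary \ref{k-ext}. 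Fourth, take $\epsilon=\inf_k\mathcal{D}_k$ and run the one-line contradiction argument above. A finitely generated Schottky group has a well-defined rank $k$, so the bound applies uniformly.

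The one subtle point worth spelling out is the inductive structure: Lemma \ref{k-finite} only reduces rank $k+1$ to the case where \emph{all} lower-rank subgroups are classical, so one must verify that a Schottky group of rank $k+1$ with small $D_\G$ indeed has all its proper subgroups (which are Schottky of smaller rank, with $D$ no larger) classical --- this is where the uniformity of the thresholds $\nu_2,\ldots,\nu_k$ and $\delta$ across ranks is essential, and it is exactly what Corollary \ref{k-ext} packages. I expect the main obstacle to be purely bookkeeping: ensuring the threshold used at rank $k+1$ does not degenerate as $k\to\infty$, which is guaranteed because Lemma \ref{k-finite}'s $\delta$ is independent of $k$ for $k>K$ and only finitely many ranks $\leq K$ need the (already positive) constants $\mathcal{D}_2,\ldots,\mathcal{D}_K$. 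Once $\epsilon$ is fixed this way, the proof of Theorem \ref{main-1} is immediate, and Theorem \ref{main} then follows by the final topological arguments of Section 10 that promote ``classical Schottky'' from the Schottky setting to arbitrary finitely generated non-elementary Kleinian groups of sufficiently small Hausdorff dimension.
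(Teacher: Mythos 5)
Your proposal is correct and follows essentially the paper's own route: the fixed-rank threshold of Theorem \ref{rank-k} (itself obtained from Theorem \ref{t-space} and Theorem \ref{2-fixed-point}) combined with the rank-extension results Lemma \ref{k-finite} and Corollary \ref{k-ext} of Section 9. If anything you are more explicit than the paper, whose stated proof of Theorem \ref{main-1} cites only Theorems \ref{t-space} and \ref{2-fixed-point}; your appeal to Corollary \ref{k-ext} is exactly what supplies the uniformity in the rank that the statement requires.
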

\begin{proof}
This follows from Theorem \ref{t-space} and Theorem \ref{2-fixed-point}.
\end{proof}
\begin{proof}[Proof of Theorem \ref{main}]
The final argument that we need to show is that Kleinian groups with limit set of small enough Hausdorff dimension are Schottky groups.
This is a well established fact and detailed arguments can be found in \cite{HS}. For completeness we give a quick review here.\par
Let $\G$ be a non-elementary finitely generated Kleinian
group. 
By Selberg lemma we can assume $\G$ is a torsion-free non-elementary finited generated Kleinian group.
$\mathfrak{D}_{\G}<\frac{1}{2}$ implies $\G$ is geoemtrically finite convex cocompact of second kind. Since 
$\Omega_\G/\G$ consists of finitely many compact compressible Riemann surfaces, we can decomposes $\G$ along compression disks and after
finitely many steps we left with topological balls. Hence $\mathbb{H}^3/\G$ is a handle body.
\end{proof}
\text{}\\
E-mail: yonghou@princeton.edu
\pdfbookmark[1]{Reference}{Reference}
\bibliographystyle{plain}

\end{document}